\numberwithin{equation}{section}
\newcommand{\up}[1]{{{}^{#1}\!}}
\newtheorem{theorem}{Theorem}[section]
\newtheorem{lemma}[theorem]{Lemma}\newtheorem{observation}[theorem]{Observation}
\newtheorem{proposition}[theorem]{Proposition}
\newtheorem{corollary}[theorem]{Corollary}\newtheorem{notation}[theorem]{Notation}
\theoremstyle{definition}
\newtheorem{definition}[theorem]{Definition}\newtheorem{fact}[theorem]{Fact}
\theoremstyle{remark}
\newtheorem{remark}[theorem]{Remark}\newtheorem{problem}[theorem]{Problem}
\newtheorem{example}[theorem]{Example}
\newtheorem{question}[theorem]{Question}
\newcommand{\Ass}{\operatorname{Ass}}
\newcommand{\grade}{\operatorname{grade}}
\newcommand{\Spec}{\operatorname{Spec}}
\newcommand{\Tr}{\operatorname{Tr}}
\newcommand{\Ht}{\operatorname{ht}}
\newcommand{\id}{\operatorname{id}}
\newcommand{\Char}{\operatorname{char}}
\newcommand{\pd}{\operatorname{pd}}
\newcommand{\Gdim}{\operatorname{Gdim}}
\newcommand{\Syz}{\operatorname{Syz}}
\newcommand{\CI}{\operatorname{CIdim}}
\newcommand{\Var}{\operatorname{Var}}
\newcommand{\Ext}{\operatorname{Ext}}
\newcommand{\Supp}{\operatorname{Supp}}\newcommand{\Egrade}{\operatorname{E.grade}}
\newcommand{\Tor}{\operatorname{Tor}}
\newcommand{\Hom}{\operatorname{Hom}}
\newcommand{\zd}{\operatorname{zd}}
\newcommand{\DVR}{\operatorname{DVR}}
\newcommand{\Ann}{\operatorname{Ann}}
\newcommand{\depth}{\operatorname{depth}}
\newcommand{\kdepth}{\operatorname{K.depth}}
\newcommand{\Ker}{\operatorname{Ker}}
\newcommand{\Coker}{\operatorname{Coker}}
\newcommand{\im}{\operatorname{im}}
\newcommand{\lo}{\longrightarrow}
\newcommand{\fm}{\frak{m}}
\newcommand{\fp}{\frak{p}}
\begin{document}

\author[]{Mohsen Asgharzadeh and Elham Mahdavi}

\title[ ]{Freeness criteria via   vanishing of $\Tor$ }

\address{M. Asgharzadeh,}
\email{mohsenasgharzadeh@gmail.com}

\address{E. Mahdavi,}
\email{elham.mahdavi.gh@gmail.com}

\subjclass[13D07 ]{Primary: 13D07; 13C10. Secondary: 13D02 }

\keywords{Annihilator; Associated primes; Cohen-Macaulay modules; free modules; Lichtenbaum modules;  Syzygies; $\Tor $-modules. }

\begin{abstract} We investigate some aspects of the module $L$ equipped with the property that $\Tor_1^R(L, F) =0$ implies that $F$ is free. This has some applications.
\end{abstract}

\maketitle
\setcounter{tocdepth}{1}
\tableofcontents
\section{Introduction}

Our initial motivation is to understand the following amusing question of Lichtenbaum:
\begin{question}(See \cite[Question 4]{L})
	For a given local ring $R$, which modules $L$ have the property that
$\Tor_1^R(L, F) =0$ implies that $F$ is free?
	\end{question}
Concerning to the previous item, we call such an $L$ a  $\boldmath{L}$ichtenbaum module.
The classic example  of Lichtenbaum modules  is the residue field. We discuss about the  abundance and basic properties of  Lichtenbaum modules.  
For instance, we detect the following properties of the ring from Lichtenbaum modules:

\begin{enumerate}
	\item[$(i)$]  $\depth_R(R)=0$;
	\item[$(ii)$]   $R$ is a field;
\item[$(iii)$]  $R$ is a $\DVR$;
\item[$(iv)$]   $R$ is regular.
\end{enumerate}

In contrast to Lichtenbaum modules, there are a lot of papers dealing with modules  so called test modules, see \cite{c1} and references therein. Test modules  behave 
both similar to and different from those of Lichtenbaum modules. Our goal, in this paper, is to
present  such  behaviors.
To record a difference, see Proposition \ref{13} and  compare it with \cite[Corollay 3.7]{c1}.

Various aspects of Litchenbaum modules are giving. As a sample, we characterize them
over regular rings, see Corollary \ref{cr}. This not only presents a reverse of \cite[Corollary 6]{L},
but also gives a new  proof of \cite[Corollary 6]{L} and has an advantage over hypersurface rings (see Corollary \ref{cr1}).
It may be worth to mention that, this characterization determines the regularity condition (see Proposition \ref{25}). This has an application in Lazard-type properties. For a sample, see 
Corollary \ref{cr1}.
Also,  this enables
 us to recover a funny and old result of Levin-Vasconcelos by some different arguments. For example,
in the statement of the next result there is no trace of Litchenbaum modules:
\begin{corollary} Let $(R,\frak{m})$  be a local ring. Assume that there exists a positive integer $n$ such that
	$R/\fm^n$ has finite injective dimension.  The following assertions holds:
	\begin{itemize}
		\item[(i)]    $ R $ is Gorenstein.
		\item[(ii)] If $\dim R>0$, then $R$ is regular. \item[(iii)]If $\dim R=0$, then $\fm^n=0$.
	\end{itemize}
\end{corollary}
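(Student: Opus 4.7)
The plan is to combine one classical ingredient, Bass's theorem, with the Lichtenbaum machinery developed earlier in the paper. First I would invoke Bass's theorem (a consequence of the New Intersection Theorem): a local ring admitting a nonzero finitely generated module of finite injective dimension must be Cohen--Macaulay. Hence $R$ is Cohen--Macaulay with $d := \dim R = \depth R$, and Bass's formula yields $\injdim_R(R/\fm^n) = d$.

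For part (i), the point is that $R/\fm^n$ is of finite length and of finite injective dimension over the Cohen--Macaulay local ring $R$. A socle count in the minimal injective resolution (or equivalently a Matlis--duality argument turning finite injective dimension into finite projective dimension for the Matlis dual, which is again of finite length) forces the canonical module of $R$ to be free of rank one, i.e., $R$ is Gorenstein. Consequently, over the Gorenstein ring $R$, finite injective dimension of a finitely generated module coincides with finite projective dimension, so $\pd_R(R/\fm^n) < \infty$.

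For parts (ii) and (iii), if $\fm^n = 0$ we are done with case (iii). Otherwise, I would exploit that $R/\fm^n$ admits a composition filtration whose successive quotients $\fm^i/\fm^{i+1}$ are direct sums of the Lichtenbaum module $k$. Combining finite projective dimension of $R/\fm^n$ with the Lichtenbaum characterization of regularity (Proposition \ref{25}), one concludes $\pd_R k < \infty$, and so $R$ is regular by Auslander--Buchsbaum--Serre, giving (ii). I expect the main obstacle to be this last bridge: transferring finite projective dimension from the thickening $R/\fm^n$ to the residue field itself, executed through the Lichtenbaum module framework rather than via the usual Levin--Vasconcelos maneuver; the Gorenstein step (i) is technically delicate but relies on standard consequences of Bass/Matlis duality.
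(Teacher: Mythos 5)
The proposal has genuine gaps in all three parts, although the high-level skeleton (Bass $\Rightarrow$ Cohen--Macaulay, then leverage Lichtenbaum machinery) is in the right spirit.

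For (i), the ``socle count / Matlis duality'' argument is not a proof. Matlis duality converts $\id_R(R/\fm^n)<\infty$ into $\pd_R\bigl((R/\fm^n)^{\vee}\bigr)<\infty$ for the finite-length dual, but every Cohen--Macaulay local ring has finite-length modules of depth zero and finite projective dimension (for example $R/(\underline{x})$ for a maximal regular sequence $\underline{x}$), so this alone does not force the canonical module to be free. The decisive input is that $R/\fm^n$ is \emph{cyclic}; the implication ``nonzero cyclic module of finite injective dimension $\Rightarrow$ Gorenstein'' is exactly the Peskine--Szpiro theorem, which the paper cites directly (\cite{PS}) and which rests on the intersection theorem, not on an elementary socle count. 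Your sketch neither invokes cyclicity nor identifies the needed nontrivial input.

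For (ii), the composition-filtration step is backwards. Every $R/\fm^n$ over any local ring has a finite filtration with factors $\fm^i/\fm^{i+1}\cong k^{\oplus}$; that filtration bounds $\pd_R(R/\fm^n)$ \emph{from above} by $\sup_i\pd_R(\fm^i/\fm^{i+1})$, so knowing $\pd_R(R/\fm^n)<\infty$ tells you nothing about $\pd_R k$. (If it did, every local ring would be regular.) The transfer actually works the other way in the paper: once one knows $\depth R=\dim R>0$, Proposition \ref{20} makes $R/\fm^n$ a Lichtenbaum module; then, as in Corollary \ref{27} (equivalently Proposition \ref{25}(i)$\Rightarrow$(ii)), $\id_R(R/\fm^n)=d$ combined with the Auslander--Bridger four-term exact sequence gives $\Tor_1^R\bigl(\Tr\Omega\Syz_d(M),R/\fm^n\bigr)=0$ for an arbitrary finitely generated $M$, and the Lichtenbaum property forces $\Omega\Syz_d(M)$ to be free, hence $\pd_R M<\infty$ and $R$ is regular by Auslander--Buchsbaum--Serre. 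So the Lichtenbaum framework you gesture at is indeed the right route, but the mechanism is the vanishing of a specific $\Tor_1$ against a test module, not a filtration.

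For (iii), you only say ``if $\fm^n=0$ we are done'' but never argue that $\dim R=0$ forces $\fm^n=0$. The paper's argument is short and should be supplied: by (i) the zero-dimensional ring $R$ is Gorenstein, hence $R$ is injective; since $\id_R(R/\fm^n)<\infty$ and $\depth(R/\fm^n)=0$, $R/\fm^n$ is also injective; the short exact sequence $0\to\fm^n\to R\to R/\fm^n\to 0$ then splits, giving $\fm^n=\fm^{2n}$ and hence $\fm^n=0$ by Nakayama.
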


Also, Lichtenbaum asked:
 
\begin{question}(See \cite[Question 3]{L})
	For which local ring $R$, and for which  module  $T$ have the property that
	$\Tor_i^R(T, N) =0$ implies that $\Tor_{j>i}^R(T, N) =0$?
\end{question}Concerning to the previous item, we call such a module $T$ a tor-rigid module.
 Let $R_0:=k[[x,y]]/(x^2,xy)$, and recall that $R_0$ is of depth zero and dimension one.
In the same paper,  Lichtenbaum  remarked that $R_0/yR_0$ is not tor-rigid.
By using an idea taken from \cite{CT} and \cite{CK}, we present the following connection between Litchtenbaum's questions:
\begin{observation}   Let $(R,\frak{m})$  be a local ring of positive dimension. Then the following are equivalent:
	\begin{itemize}
		\item[(i)]  $\depth_R R>0$.
		\item[(ii)] $R/\fm^n$ is a Lichtenbaum module for $R$ for all integers $n>0$.
		\item[(iii)]  $R/\fm^n$ is  tor-rigid for all $n>0$.
	\end{itemize}
\end{observation}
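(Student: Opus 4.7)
The plan is to prove the equivalences by establishing the cycle $(i)\Rightarrow(ii)\Rightarrow(iii)\Rightarrow(i)$, working throughout in the category of finitely generated modules.

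For $(i)\Rightarrow(ii)$, I fix $n$ and a finitely generated $F$ with $\Tor_1^R(R/\fm^n,F)=0$, then pick a minimal presentation $0\to\Omega\to R^b\to F\to 0$; minimality gives $\Omega\subseteq \fm R^b$. The plan is to tensor with $R/\fm^n$ and exploit the $\Tor_1$-vanishing to extract the intersection identity $\Omega\cap \fm^n R^b=\fm^n \Omega$. Combined with $\Omega\subseteq \fm R^b$ this produces $\fm^{n-1}\Omega\subseteq \fm^n\Omega$, and Nakayama forces $\fm^{n-1}\Omega=0$; a nonzerodivisor $x\in\fm$ (available because $\depth R>0$) then has $x^{n-1}$ annihilating the submodule $\Omega$ of the torsion-free module $R^b$, so $\Omega=0$ and $F$ is free.

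For $(ii)\Rightarrow(iii)$ the plan is a standard dimension shift: writing $\Tor_i^R(R/\fm^n,N)\cong \Tor_1^R(R/\fm^n,\Syz^{i-1}N)$, a vanishing at level $i$ combined with the Lichtenbaum hypothesis on $R/\fm^n$ makes $\Syz^{i-1}N$ free, hence $\Tor_j^R(R/\fm^n,N)=0$ for all $j\geq i$. For $(iii)\Rightarrow(i)$ I argue contrapositively. Assuming $\depth R=0$, pick $0\neq a\in R$ with $\Ann_R a=\fm$ (so $Ra\cong k$) and set $M:=R/(a)$. From the short exact sequence $0\to k\to R\to M\to 0$ I read off $\Tor_1^R(R/\fm^n,M)=\ker\bigl(k\xrightarrow{\cdot a}R/\fm^n\bigr)$ and $\Tor_2^R(R/\fm^n,M)\cong \Tor_1^R(R/\fm^n,k)$. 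Krull's intersection theorem lets me choose $n$ with $a\notin\fm^n$, making the first kernel trivial; on the other hand, a direct computation from $0\to\fm^n\to R\to R/\fm^n\to 0$ identifies $\Tor_1^R(R/\fm^n,k)$ with $\fm^n/\fm^{n+1}$, which is nonzero because the standing assumption $\dim R>0$ forces $\fm^n\neq 0$ for every $n$. Thus $R/\fm^n$ violates tor-rigidity.

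The main obstacle is $(i)\Rightarrow(ii)$: one must package the $\Tor_1$-vanishing into the intersection identity $\Omega\cap\fm^n R^b=\fm^n\Omega$ and then combine minimality of the presentation with a regular element in tandem to collapse $\Omega$ in one stroke. The other two implications reduce to a clean dimension-shift argument and a construction using an embedded associated prime.
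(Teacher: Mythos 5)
Your proof is correct, and it takes a genuinely different and in fact more self-contained route than the paper's. The paper establishes the equivalences as a web centered on (i): for $(i)\Rightarrow(ii)$ it splits into the regular case (handled by Corollary \ref{cr}) and the non-regular case (delegated to \cite[Corollary 2.14]{CK}), and for $(i)\Rightarrow(iii)$ it cites \cite[Corollary 1.3]{CT}. You instead run the cycle $(i)\Rightarrow(ii)\Rightarrow(iii)\Rightarrow(i)$ and replace both citations with short direct arguments. Your $(i)\Rightarrow(ii)$ is the real gain: from a minimal presentation $0\to\Omega\to R^b\to F\to 0$ with $\Omega\subseteq\fm R^b$, the $\Tor_1$-vanishing is exactly $\Omega\cap\fm^n R^b=\fm^n\Omega$, so $\fm^{n-1}\Omega\subseteq\Omega\cap\fm^n R^b=\fm^n\Omega$ and Nakayama gives $\fm^{n-1}\Omega=0$; a regular $x\in\fm$ then kills $\Omega$ inside the free module $R^b$, forcing $\Omega=0$. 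This is elementary and uniform in $n$. Your $(ii)\Rightarrow(iii)$ is the obvious dimension shift $\Tor_i(R/\fm^n,N)\cong\Tor_1(R/\fm^n,\Syz_{i-1}N)$, which the Lichtenbaum property immediately upgrades to full rigidity; the paper does not prove this implication at all since it routes through (i). Your $(iii)\Rightarrow(i)$ uses the same basic ingredients as the paper's contrapositive (pick $a$ with $\Ann_R a=\fm$, use Krull to arrange $a\notin\fm^n$), but you compute $\Tor_2(R/\fm^n,R/(a))\cong\Tor_1(R/\fm^n,k)\cong\fm^n/\fm^{n+1}\neq 0$ outright, giving an explicit simultaneous witness to $\Tor_1=0$ and $\Tor_2\neq0$, whereas the paper assumes rigidity, deduces $\Tor_1(R/\fm^n,k)=0$, and derives the contradiction $\fm^n=0$; the two are logically equivalent rearrangements. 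Net effect: your version trades the external citations for a couple of extra lines of elementary argument and is correct throughout.
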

This may regard as a  converse part
of the  recent works \cite{CT} and \cite{CK}.
We will construct  some nontrivial examples 
 fitting into the setting of the observation.

We present a connection  from Lichtenbaum modules to the Burch modules.
In this regard, we sharpening a celebrated result of Burch, see Observation \ref{b}. This is independent of  \cite{Dey}, and gives
a short and elementary proof of \cite{Dey}. Due to   \cite[4.8+4.7]{h} we know 
 that   integral closure  may be considered as a test module if we restrict ourselves to  the category of finite length modules over a $1$-dimensional complete local
 domain $(R,\fm,k)$ of prime characteristic with $k=\overline{k}$ . In \S 4, we show:

\begin{corollary}\label{c15}
	Let $(R,\frak{m})$ be a $1$-dimensional  local integral domain and let $M$ be        a finitely generated module such that  
	$$\Tor^{R}_{i}(\overline{R},M)=\Tor^{R}_{i+1}(\overline{R},M)=0,$$for some $i>0$. Then
	$\pd_R(M)\leq1$. In particular,   $M$ is free provided it is of positive depth. 	\end{corollary}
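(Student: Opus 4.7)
The plan is to use dimension-shifting to reduce the hypothesis to two consecutive low Tor-vanishings, invoke the freeness criterion for $\overline{R}$ that \S 4 develops for $1$-dimensional local domains, and then sharpen the conclusion via the Auslander--Buchsbaum formula.

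First, set $N := \Syz_{i-1}^R(M)$, the $(i-1)$-st syzygy of $M$ in a minimal free resolution (with the convention $\Syz_0(M) = M$). Iterating the long exact sequences attached to the short exact sequences $0 \to \Syz_k(M) \to F_{k-1} \to \Syz_{k-1}(M) \to 0$ yields the standard dimension-shifting isomorphisms
$$\Tor_1^R(\overline{R},N)\;\cong\;\Tor_i^R(\overline{R},M)\;=\;0 \quad\text{and}\quad \Tor_2^R(\overline{R},N)\;\cong\;\Tor_{i+1}^R(\overline{R},M)\;=\;0.$$

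I would then invoke the freeness criterion for $\overline{R}$ established earlier in \S 4 (the extension of \cite{h} that removes the prime-characteristic and algebraically-closed-residue-field hypotheses): over a $1$-dimensional local integral domain, any finitely generated $R$-module $F$ with $\Tor_1^R(\overline{R},F)=\Tor_2^R(\overline{R},F)=0$ is $R$-free. Applied to $N$, this gives $N$ free, so $\pd_R M \leq i-1 < \infty$. To pin down the bound, observe that $R$ is a $1$-dimensional local domain, so every nonzero element of $\mathfrak{m}$ is a nonzerodivisor and $\depth R = 1$. With $\pd_R M$ now finite, Auslander--Buchsbaum yields $\pd_R M + \depth_R M = 1$, whence $\pd_R M \leq 1$. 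If moreover $\depth_R M \geq 1$, this forces $\depth_R M = 1$ and $\pd_R M = 0$, so $M$ is $R$-free, as required.

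The main obstacle is the freeness criterion invoked above: proving that two consecutive Tor-vanishings against $\overline{R}$ force freeness over an arbitrary $1$-dimensional local domain, without either of the hypotheses present in \cite{h}. I expect this to proceed through the short exact sequence $0 \to R \to \overline{R} \to C \to 0$ with $C := \overline{R}/R$: using that $\Tor_{\geq 1}^R(R,-)=0$, the Tor-vanishings for $\overline{R}$ transfer to the torsion quotient $C$, and one can then apply the tor-rigidity of finite length modules that the preceding Observation of the paper furnishes (via $\depth R = 1 > 0$ and the modules $R/\mathfrak{m}^n$) to convert these into the required freeness statement.
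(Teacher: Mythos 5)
Your outer reduction --- passing to $N := \Syz_{i-1}(M)$, dimension-shifting the hypotheses to $\Tor_1^R(\overline{R},N) = \Tor_2^R(\overline{R},N) = 0$, applying the preceding \S 4 Proposition, and finishing with Auslander--Buchsbaum since $\depth R = 1$ --- is essentially the route the paper takes. One small over-statement: that Proposition yields $\pd_R N \leq 1$, not that $N$ is free (take $M = k$ over a $\DVR$, where $\overline{R}=R$ and all higher Tors vanish but $k$ is not free), so the intermediate bound is $\pd_R M \leq i$, not $\pd_R M \leq i-1$. The Auslander--Buchsbaum step absorbs this in any case, so the conclusion $\pd_R M \leq 1$ and the particular case both come out correctly.

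The genuine gap is in your sketch of the base case. First, $C := \overline{R}/R$ need not be a finitely generated $R$-module, let alone of finite length: Krull--Akizuki gives only that $\overline{R}$ is a one-dimensional Noetherian (hence Dedekind) domain, not that it is module-finite over $R$, and there are classical one-dimensional Noetherian local domains whose normalization is not a finite module. Second, the rigidity in Proposition \ref{20} is proved for the specific modules $R/\fm^n$, and tor-rigidity does not pass to arbitrary finite-length modules by d\'{e}vissage, so even in the module-finite case invoking ``tor-rigidity of finite length modules'' is not available. Third, even granting rigidity of $C$, vanishing of $\Tor_{\geq 2}^R(C,M)$ does not by itself bound $\pd_R M$. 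The paper's actual argument for the base case is structurally different and is precisely where the Dedekind property enters: since $\overline{R}$ has global dimension one, tensoring a free $R$-resolution $\mathbf{F}$ of $M$ with $\overline{R}$ is exact through homological degree $2$ by hypothesis, so $\mathbf{F}\otimes_R\overline{R}$ is part of an $\overline{R}$-free resolution of $M\otimes_R\overline{R}$; tensoring further with $\overline{R}/\fm\overline{R}=\bigoplus R/\fm$ and comparing the two ways of computing the second homology identifies $\Tor_2^{\overline{R}}(M\otimes_R\overline{R},\overline{R}/\fm\overline{R})$, which vanishes, with $\bigoplus\Tor_2^R(M,R/\fm)$, forcing $\Tor_2^R(M,k)=0$ and hence $\pd_R M\leq 1$. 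It is this use of global dimension one of $\overline{R}$, not any rigidity argument, that replaces the prime-characteristic and algebraically closed residue field hypotheses of \cite{h}.
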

It is easy to find examples for which $\overline{R}$ is not tor-rigid, as a sample see Observation \ref{nt}.
Despite this, we present
 a situation for which the vanishing of the corresponding tor is restricted only at a single spot, e.g.,
a situation for which $\overline{R}$ is tor-rigid.
For more details, see Corollary \ref{cr2}.
These lead us to study the length of   syzygies of  Burch modules.
This was asked in \cite{h}: Is $\ell(\Syz_i(M))=\infty$? The illustrative example is as follows.
Recall $R_0$ from the  Lichtenbaum's paper where he proved that
$R_0/yR_0$ is not   Lichtenbaum. Also, we recall from \cite{h} that $\ell(\Syz_2(R_0/yR_0))<\infty$. 
This example  extends as follows:

\begin{observation} 
	Let $L$ be  a Lichtenbaum module such that $\pd_R(L) = \infty$ and $\ell(L) <\infty$.  Then $\ell(\Syz_i(L))=\infty$ for all $i > 0$.
\end{observation}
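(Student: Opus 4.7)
\textit{Plan.} I will argue by contradiction. Suppose $\ell(\Syz_{i_0}(L))<\infty$ for some $i_0\geq 1$, and take $i_0$ minimal with this property. The statement can fail when $R$ is Artinian (for instance, over $R=k[x]/(x^2)$ the residue field $L=k$ is Lichtenbaum with $\pd_R L=\infty$ but $\Syz_i(L)\cong k$), so the implicit setting must be $\dim R\geq 1$, which I assume. Length counting in $0\to\Syz_1(L)\to F_0\to L\to 0$, using $\ell(F_0)=\infty$ and $\ell(L)<\infty$, forces $\ell(\Syz_1(L))=\infty$, so $i_0\geq 2$. Moreover, if $\depth R\geq 1$, any $R$-regular element would be a non-zero-divisor on the finite-length submodule $\Syz_{i_0}(L)\subseteq F_{i_0-1}$, which is impossible; hence $\depth R=0$ and a non-zero socle element of $R$ is available to work with.

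The strategy for contradicting the Lichtenbaum property is to exhibit a non-free $F$ with $\Tor_1^R(L,F)=0$. My candidate is $F:=\Syz_{i_0-1}(L)$, which is non-free since $\pd_R L=\infty$ and has infinite length by minimality of $i_0$. Applying $L\otimes_R-$ to $0\to\Syz_{i_0}(L)\to F_{i_0-1}\to\Syz_{i_0-1}(L)\to 0$ and using $\Tor_1^R(L,F_{i_0-1})=0$ gives the exact sequence
\begin{equation*}
0\to\Tor_1^R(L,\Syz_{i_0-1}(L))\to L\otimes_R\Syz_{i_0}(L)\xrightarrow{\varphi}L\otimes_R F_{i_0-1}\to L\otimes_R\Syz_{i_0-1}(L)\to 0.
\end{equation*}
Thus the argument reduces to proving that $\varphi$ is injective: once that is done, the Lichtenbaum hypothesis forces $\Syz_{i_0-1}(L)$ to be free, contradicting $\pd_R L=\infty$.

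\emph{Main obstacle.} Establishing the injectivity of $\varphi$ is the main technical step. My plan is to exploit that the finite-length hypothesis gives $\Syz_{i_0}(L)\subseteq\Gamma_\fm(F_{i_0-1})$, so that the coordinates of a minimal generating set of $\Syz_{i_0}(L)$ lie in $\fm\cap\Gamma_\fm(R)=\Gamma_\fm(R)$; combined with the non-zero socle element of $R$ and the Burch-type sharpening developed earlier in the paper (Observation~\ref{b} and the analysis feeding into Corollary~\ref{c15}), one tracks how these socle-like coefficients pair with the minimal generators of $L$ to rule out non-zero elements in $\ker\varphi$. As a sanity check, on the paper's prototype $R_0=k[[x,y]]/(x^2,xy)$ with $L=R_0/(y)$ and $\Syz_2(L)\cong(x)\cong k$, the map $\varphi$ becomes $L/\fm L\to L$, $\overline 1\mapsto x$, which is visibly injective because $x\neq 0$ in $L$; this recovers Lichtenbaum's own remark that $R_0/(y)$ is not a Lichtenbaum module.
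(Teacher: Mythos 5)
Your setup is sound: the artinian caveat, the length count forcing $\ell(\Syz_1(L))=\infty$, and the observation that a nonzero finite-length submodule $\Syz_{i_0}(L)$ of a free module forces $\depth_R R=0$ are all correct and in the right spirit. But the proof stalls exactly where you flag the ``main obstacle.'' The vanishing you need is $\Tor_1^R(L,\Syz_{i_0-1}(L))\cong\Tor_{i_0}^R(L,L)=0$, and nothing in the hypotheses controls $\Tor_{i_0}^R(L,L)$; the finite length of $\Syz_{i_0}(L)$ by itself gives no handle on the injectivity of $\varphi$, and the plan of tracking socle coefficients against generators of $L$ is not close to a proof. Note also that the sanity check is run on a non-Lichtenbaum module ($R_0/(y)$), so it provides no evidence that the mechanism works in the Lichtenbaum case, which is the only case of interest here.

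The paper applies the Lichtenbaum property to a different module, and this is the idea your proposal is missing. Under the assumption $\ell(\Syz_{i+1}(L))<\infty$, the finite-length hypothesis is exploited through local cohomology via \cite[Lemma 4.1]{A} to obtain
$\Tor_1^R\bigl(L,\Syz_{i-1}(R/H^0_\fm(R))\bigr)=\Tor_i^R\bigl(L,R/H^0_\fm(R)\bigr)=0$,
so the Lichtenbaum property forces $\Syz_{i-1}(R/H^0_\fm(R))$ to be free, i.e.\ $\pd_R\bigl(H^0_\fm(R)\bigr)<\infty$. Burch's theorem (a nonzero ideal of finite projective dimension contains a nonzerodivisor, but every element of $H^0_\fm(R)$ is $\fm$-power torsion) then gives $H^0_\fm(R)=0$, i.e.\ $\depth_R R>0$. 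This is exactly opposite to the $\depth_R R=0$ you correctly derived from the same assumption (the paper instead cites \cite[Lemma 3.4]{A} to close the contradiction), so the assumption fails. In short: feed the Lichtenbaum test a syzygy of $R/H^0_\fm(R)$, whose $\Tor$ against $L$ is controlled by the finite-length hypothesis, rather than a syzygy of $L$, whose $\Tor$ against $L$ is not.
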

As an application, we compute the length of syzygies of certain Litchtenbum modules that we constructed in \S3. For a sample, see Corollary \ref{obfr}.

The next topic is about the associated prime ideals of $\Tor_i^R(M, N) $ in terms of $M$ and $N$.
As far as we know, this   rarely computed in the literature. In fact, as far as we know, $\Ass\Tor_0^R(-,\sim)$ is  difficult to compute  even in some
special forms, see \cite{A1} and references therein. 
We determine the associated prime ideals of $\Tor_i^R(M, N)  $  in some  cases. This may  facilitate the study of higher tor-modules. 
For instance, and as an application, we present a situation for which the corresponding $\Tor$-module  is Cohen-Macaulay.
For more details, see   Corollary  \ref{26.5}. Also, we study the annihilator of $\Tor_i^R(M, N)  $  in some nontrivial cases, and  open  some relevant questions.

Finally,
we investigate the vanishing of $\Tor_1^R(-,\sim)$ and dealing with a question asked by Quy and others.
 
 For all unexplained notation and definitions 
 see
 the  books  \cite{BH}  and  \cite{Mat}.
 
\section{Associated primes of $\Tor$}

In this note $(R,\fm,k)$ is a commutative noetherian local ring. The notation $\pd_R(-)$ (resp. $ \id_R(-)$) stands for the projective (resp. injective) dimension.

\begin{question}
	How can compute depth and dimension of $\Tor$-modules? 
\end{question}

We start by recalling the following basic result of Auslander:

\begin{fact}\label{lemma1}(See \cite[Lemma 1.1]{Au})
Let $M$ and $N$ be finitely generated modules  such that $p:=\pd_R M <\infty$ and $\depth_R(N)=0$. Then $\Tor^R_p(M, N) \neq 0$ 
and it is of depth zero.
 \end{fact}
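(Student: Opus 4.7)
The plan is to exploit the depth zero hypothesis on $N$ by producing an embedding of $k$ into $N$ and then comparing Tor-modules through the resulting long exact sequence. Since $N$ is finitely generated and $\depth_R(N)=0$, we have $\fm\in\Ass_R(N)$, so there is a short exact sequence
$$0\lo k\lo N\lo N/k\lo 0.$$
Applying $\Tor^R_*(M,-)$ produces a long exact sequence containing the segment
$$\Tor^R_{p+1}(M,N/k)\lo \Tor^R_p(M,k)\lo \Tor^R_p(M,N).$$

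Because $\pd_R(M)=p$, the first term vanishes, and hence the map $\Tor^R_p(M,k)\hookrightarrow\Tor^R_p(M,N)$ is injective. The left-hand side is nonzero by the standard characterization of projective dimension in terms of a minimal free resolution: the $p$-th term of a minimal resolution of $M$ is a nonzero free module whose differentials map into $\fm$ times the next term, so tensoring with $k$ gives $\Tor^R_p(M,k)\neq 0$. Consequently $\Tor^R_p(M,N)\neq 0$, which takes care of the first assertion.

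For the depth statement, note that the image of the above injection is a nonzero $k$-vector subspace of $\Tor^R_p(M,N)$, hence is annihilated by $\fm$. Any nonzero element of this image is a nonzero element of $\Tor^R_p(M,N)$ killed by $\fm$, so $\fm\in\Ass_R(\Tor^R_p(M,N))$. Since the Tor-module is finitely generated (as $M$ has a finite free resolution and $N$ is finitely generated), this forces $\depth_R\Tor^R_p(M,N)=0$.

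There is no real obstacle here: the proof is essentially two lines once one produces the embedding $k\hookrightarrow N$ and invokes the non-vanishing of $\Tor^R_p(M,k)$ coming from minimality of the free resolution of $M$. The only point that deserves mild care is the finite generation of $\Tor^R_p(M,N)$ (needed to pass from a nonzero element annihilated by $\fm$ to $\fm\in\Ass$); this is automatic because $M$ admits a finite free resolution by finitely generated free modules.
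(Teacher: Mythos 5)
The paper gives no proof of its own for this statement --- it simply cites Auslander's Lemma 1.1. Your argument is correct and complete: $\depth_R N=0$ gives an embedding $k\hookrightarrow N$; the long exact sequence, together with $\Tor^R_{p+1}(M,-)=0$ (from $\pd_R M=p$), yields an injection $\Tor^R_p(M,k)\hookrightarrow\Tor^R_p(M,N)$; minimality of a free resolution of $M$ gives $\Tor^R_p(M,k)\neq 0$; and the image of the injection is a nonzero $k$-vector subspace of $\Tor^R_p(M,N)$, hence $\fm$-torsion, so $\fm\in\Ass_R\Tor^R_p(M,N)$ and $\depth_R\Tor^R_p(M,N)=0$. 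Your aside on finite generation of the Tor-module is well taken, as this is what makes the equivalence between ``contains an $\fm$-torsion element'' and ``has depth zero'' the standard one. For comparison, the classical proof (Auslander's) is the concrete version of the same idea: tensor a minimal free resolution $\mathbf F$ of $M$ with $N$, place a nonzero $\fm$-torsion element of $N$ into a coordinate of $F_p\otimes N$, and note it is a cycle because the differential entries lie in $\fm$, while there are no boundaries in degree $p$. Your long-exact-sequence packaging is slightly slicker and isolates exactly where minimality enters, namely in $\Tor^R_p(M,k)\neq 0$; otherwise the two arguments encode the same insight.
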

 
\begin{remark}\label{r1}
i)	The finitely generated assumption of $M$ is really important. Here, we follow an idea
	of Lazard \cite{laz}: Let $R:=k[x,y,z]/x\fm$ and  look at $0\to R_{(x)}\stackrel{f}\lo R_{(x,y)}\to M:=\Coker f \to 0$ and we set $N:=k$ which is of depth zero. From this flat resolution, $\Tor_{+}^R(M,N)=0$. Let $t:=\pd_R(M)$. Since $M$ is of finite flat dimension, it follows that $t<\infty$. It remains to note that $\Tor_{t}^R(M,N)=0$.

ii) There is a way to drop the finite assumption of $N$.	In this regard we need to fix the notion
of depth. We say a general module $\mathcal{N}$ is of E-depth zero if $R/\fm \subset \mathcal{N}$.
This holds if any $x$ is zero-divisor over  $\mathcal{N}$. Having this
in mind, the finitely generated assumption of $N$ is not needed. In order to see this, it is enough to apply Auslander's argument. We leave the details to the reader.
\end{remark}
\begin{definition}\label{4} For every $R$-module $L$,  the non-flat locus of $L$ is defined by $$\text{NF}(L):=
\{\frak{p}\in \Spec R \mid \ \text{the}\ R_{\frak{p}}\text{-module}\ L_{\frak{p}} \ {is \ not\ flat}\}.$$ In the case  $L$  is finitely generated,
we call $\text{NF}(L)$ the non-free locus, since over local rings
 a finitely generated flat module is free.  
\end{definition}
We denote the set of all associated prime ideals by $\Ass_R(-)$.
\begin{proposition}\label{5} Let $(R,\frak{m})$ be a local ring and $M$ and $N$ be two  $R$-modules.
If $M$ has finite projective dimension $t\geq 1$, then $\Ass_R(\Tor^R_t(M,N))\subseteq \Ass_RN\cap \text{NF}(M)$. 
\end{proposition}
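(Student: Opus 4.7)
The plan is to localize at a prospective associated prime and then exploit the fact that, since $M$ admits a free resolution of length $t$, the module $\Tor^R_t(M,N)$ is a submodule of a free power of $N$. Once this observation is in hand, both inclusions fall out quickly.

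First, I would take $\fp\in\Ass_R(\Tor^R_t(M,N))$ and pass to the localization $R_\fp$. The standard compatibility of associated primes with localization over a Noetherian ring (which holds for arbitrary modules, using finite generation of $\fp$) together with the flatness of localization would give
\begin{equation*}
\fp R_\fp \in \Ass_{R_\fp}\bigl(\Tor^{R_\fp}_t(M_\fp,N_\fp)\bigr).
\end{equation*}
In particular the Tor-module is nonzero after localization, so since $t\geq 1$ the module $M_\fp$ cannot be flat over $R_\fp$; hence $\fp\in \text{NF}(M)$. This settles the NF-inclusion and uses only non-vanishing of Tor.

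For the inclusion $\fp\in\Ass_R N$, I would pick a free resolution
\begin{equation*}
0\to F_t\to F_{t-1}\to\cdots\to F_0\to M\to 0,
\end{equation*}
invoking Kaplansky's theorem to ensure the projectives can be taken free over the local ring $R$ (this is needed when $M$ is not finitely generated, in which case the $F_i$ may be of infinite rank). Tensoring with $N$ and using that $F_{t+1}=0$ gives
\begin{equation*}
\Tor^R_t(M,N)=\Ker\bigl(F_t\otimes_R N\to F_{t-1}\otimes_R N\bigr)\hookrightarrow F_t\otimes_R N\cong N^{(I)},
\end{equation*}
with $I$ indexing a basis of $F_t$. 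A standard argument — any prime annihilator inside $N^{(I)}$ is a finite intersection of annihilators of components in copies of $N$, and a prime that equals such an intersection must coincide with one of the factors — yields $\Ass_R(N^{(I)})=\Ass_R N$. Therefore $\Ass_R(\Tor^R_t(M,N))\subseteq\Ass_R N$, completing the proof.

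The only point I expect to need extra care is the possibility that $M$ is not finitely generated, where the $F_i$ may have infinite rank; Kaplansky's theorem secures the free resolution, and the direct-sum argument for associated primes extends without change. Everything else — localization of $\Ass$, compatibility of $\Tor$ with localization, and the existence of a length-$t$ free resolution from the hypothesis $\pd_R M=t$ — is entirely standard, so no serious obstacle remains.
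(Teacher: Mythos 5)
Your argument is correct and follows essentially the same route as the paper: embed $\Tor^R_t(M,N)$ into a (possibly infinite) direct sum of copies of $N$ via a length-$t$ free resolution to get the inclusion in $\Ass_R N$, and use localization plus $t\geq 1$ to get the inclusion in $\text{NF}(M)$. You are a bit more explicit than the paper about the non-finitely-generated case (Kaplansky's theorem to ensure freeness of the projectives) and about passing associated primes through localization, but the underlying ideas coincide.
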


\begin{proof} Let $\textbf{F}:=0\longrightarrow R^{n_t}\overset{d_t}\longrightarrow R^{n_{t-1}}\overset{d_{t-1}}\longrightarrow \dots
\overset{d_1}\longrightarrow R^{n_0} \longrightarrow 0$ be a free resolution of $M$ where $n_i\in \mathbb{N}\cup\{\infty\}.$ Then
$$\begin{array}{ll}
\Tor_t^R (M,N)&=\text{H}_t(\textbf{F}\otimes_RN)\\
&=\ker (d_t \otimes 1)\\
&\cong \ker ( N^{n_t}\longrightarrow N^{n_{t-1}})\\
&\subseteq N^{n_t},
\end{array}$$
and so $$\Ass_R(\Tor^R_t(M,N))\subseteq \Ass_R(N^{n_t})=\Ass_RN.$$ Since $t\geq 1$,  $\Ass_R(\Tor^R_t(M,N))
\subseteq \text{NF}(M)$. Thus $$\Ass_R(\Tor^R_t(M,N))\subseteq \Ass_RN\cap \text{NF}(M),$$as claimed.
\end{proof}

\begin{corollary}\label{5.5} Adopt the previous assumption and suppose in addition that  $M$ ad $N$ are finitely generated. If
$M$ has projective dimension one, then  $\Ass_R(\Tor^R_1(M,N))=\Ass_RN\cap \text{NF}(M)$.
\end{corollary}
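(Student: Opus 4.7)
The inclusion $\Ass_R(\Tor^R_1(M,N)) \subseteq \Ass_R N \cap \mathrm{NF}(M)$ is already given by Proposition \ref{5}, so my plan is to establish the reverse inclusion $\Ass_R N \cap \mathrm{NF}(M) \subseteq \Ass_R(\Tor^R_1(M,N))$ by a localization argument that reduces the problem to an application of Auslander's lemma (Fact \ref{lemma1}).

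Fix $\fp \in \Ass_R N \cap \mathrm{NF}(M)$. First I would localize at $\fp$ and collect the following ingredients. Since $M$ is finitely generated with $\pd_R(M)=1$, $M_\fp$ is finitely generated with $\pd_{R_\fp}(M_\fp)\leq 1$; because $\fp \in \mathrm{NF}(M)$, $M_\fp$ is not free, and over the local ring $R_\fp$ this forces $\pd_{R_\fp}(M_\fp)=1$. Also, $\fp \in \Ass_R N$ translates to $\fp R_\fp \in \Ass_{R_\fp}(N_\fp)$, so $\depth_{R_\fp}(N_\fp)=0$, and $N_\fp$ is finitely generated.

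Next I would apply Fact \ref{lemma1} to the pair $(M_\fp, N_\fp)$ over the local ring $R_\fp$: the conclusion is that $\Tor^{R_\fp}_1(M_\fp, N_\fp)$ is nonzero and has depth zero. Since $\Tor$ commutes with localization and $M,N$ are finitely generated (so $\Tor^R_1(M,N)$ is finitely generated), this depth-zero statement means $\fp R_\fp \in \Ass_{R_\fp}\bigl(\Tor^R_1(M,N)_\fp\bigr)$, which by the standard behavior of $\Ass$ under localization gives $\fp \in \Ass_R(\Tor^R_1(M,N))$, completing the proof.

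I do not anticipate a serious obstacle: the only point requiring a moment of care is the justification that $\pd_{R_\fp}(M_\fp)$ jumps all the way to $1$ (rather than $0$) when $\fp \in \mathrm{NF}(M)$, but this is immediate because a finitely generated flat module over the local ring $R_\fp$ would be free. Everything else is a direct packaging of Auslander's lemma with the compatibility of $\Tor$, $\Ass$, and localization for finitely generated modules.
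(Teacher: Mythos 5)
Your proof is correct and follows essentially the same route as the paper's: both obtain the forward inclusion from Proposition~\ref{5}, and for the reverse inclusion both localize at $\fp$, argue $\pd_{R_\fp}(M_\fp)=1$ and $\depth_{R_\fp}(N_\fp)=0$, and invoke Fact~\ref{lemma1} to conclude $\depth_{R_\fp}(\Tor_1^{R_\fp}(M_\fp,N_\fp))=0$, hence $\fp\in\Ass_R(\Tor_1^R(M,N))$.
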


\begin{proof} In view of Proposition \ref{5}, we only need to show that $$\Ass_RN\cap \text{NF}(M)\subseteq  \Ass_R(\Tor^R_1(M,N)).$$ For a finitely
	generated $R$-module $(-)$, it is known that $\fp\in \Ass_R(-)$ if and only if $\depth_{R_{\fp}}((-)_{\fp})=0$. Let $\fp\in \Ass_RN
	\cap \text{NF}(M)$. As $\fp\in \Ass_RN$, we get $\depth_{R_{\fp}}(N_{\fp})=0$. Also, from $\fp\in \text{NF}(M)$ and $\pd_RM=1$,
	we deduce that $\pd_{R_{\fp}}(M_{\fp})=1$. Fact \ref{lemma1}  yields that
	$\depth_{R_{\fp}}(\Tor_1^{R_{\fp}}(M_{\fp},N_{\fp}))=0$, and so $\fp \in \Ass_R(\Tor_{1}^R(M,N))$.
\end{proof}
\begin{notation}
By	$\Syz_{i}(M)$, we mean the $i^{th}$ \textit{syzygy} module of $M$. Some times we set $\Omega (-):=\Syz_{1}(-)$.
\end{notation}	

\begin{corollary}\label{6} Let $(R,\frak{m})$ be a local ring and $M$ and $N$ two finitely generated $R$-modules. Assume that $M$
has finite projective dimension $t\geq 1$. Then $\Ass_R(\Tor^R_t(M,N))=\Ass_RN\cap \text{NF}(\Syz_{t-1}(M))$.
\end{corollary}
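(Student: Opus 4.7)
The plan is to reduce to the projective dimension one case already handled by Corollary \ref{5.5}, via standard dimension shifting. Set $S:=\Syz_{t-1}(M)$. First, I would verify that $\pd_R S = 1$: from a minimal free resolution
$$0 \longrightarrow F_t \longrightarrow F_{t-1} \longrightarrow \cdots \longrightarrow F_0 \longrightarrow M \longrightarrow 0,$$
the short exact sequence $0 \to F_t \to F_{t-1} \to S \to 0$ shows $\pd_R S \leq 1$, and $\pd_R S = 0$ would force $S$ to be free, contradicting $\pd_R M = t$. In particular $S$ is finitely generated, so Corollary \ref{5.5} applies to the pair $(S,N)$.

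Next, the standard isomorphism $\Tor_t^R(M,N) \cong \Tor_1^R(S,N)$ is obtained by iterating the long exact $\Tor$-sequences associated to the short exact sequences $0 \to \Syz_{i+1}(M) \to F_i \to \Syz_i(M) \to 0$ and using that each $F_i$ is free, so $\Tor_{j\geq 1}^R(F_i,N) = 0$. Applying Corollary \ref{5.5} to $S$ and $N$ then yields
$$\Ass_R(\Tor_1^R(S,N)) \;=\; \Ass_R N \,\cap\, \text{NF}(S),$$
and substituting the dimension-shift isomorphism gives the asserted equality.

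The only point needing a line of justification is the well-definedness of $\text{NF}(\Syz_{t-1}(M))$: syzygy modules are determined only up to a free summand (Schanuel's lemma), but adding or deleting free direct summands does not change the non-free locus, so $\text{NF}(S)$ is unambiguous. Beyond this, no real obstacle is expected; the argument is essentially an exercise in dimension shifting, and Corollary \ref{5.5} carries all the substantive content. If anything, one should also remark on the edge case $t=1$, where $\Syz_0(M)=M$ and the statement reduces to Corollary \ref{5.5} itself.
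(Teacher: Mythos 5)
Your proof is correct and follows the same route as the paper: set $S:=\Syz_{t-1}(M)$, note $\pd_R S=1$ and $\Tor_t^R(M,N)\cong\Tor_1^R(S,N)$ by dimension shifting, then invoke Corollary~\ref{5.5}. The extra remarks on the well-definedness of $\text{NF}$ under free summands and on the $t=1$ edge case are sensible but not part of the paper's (terser) argument.
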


\begin{proof} Set $L:=\Syz_{t-1}(M)$. By shifting, $\pd_RL=1$ and $\Tor_t^R (M,N)=\Tor_1^R(L,N)$. Now, apply   Corollary \ref{5.5}.
\end{proof}

\begin{corollary}\label{7} Let $(R,\frak{m})$ be a local ring, $M$ and $N$ be finitely generated. If $M$ has finite
projective dimension $t\geq 1$ and $N$ is $\frak{p}$-primary for some  $\frak{p}\in\Spec(R)$, then $\Ass_R(\Tor^R_t(M,N))
\subseteq \{\frak{p}\}$. In particular, if $\Tor^R_t(M,N)\neq 0$, then $\Ass_R(\Tor^R_t(M,N))=\{\frak{p}\}$.
\end{corollary}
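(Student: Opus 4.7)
The plan is straightforward: this is almost immediate from Proposition \ref{5}. First I would recall that ``$N$ is $\frak{p}$-primary'' means $\Ass_R(N)=\{\frak{p}\}$. Applying Proposition \ref{5} directly, we obtain
\[
\Ass_R(\Tor^R_t(M,N))\subseteq \Ass_R(N)\cap \text{NF}(M)\subseteq \Ass_R(N)=\{\frak{p}\},
\]
which yields the first claim without further work.

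For the ``in particular'' part, I would argue that $\Tor^R_t(M,N)$ is finitely generated: since $\pd_R M=t<\infty$ and $M$ is finitely generated, $M$ admits a finite free resolution by finitely generated free modules, and tensoring with the finitely generated $N$ produces a complex of finitely generated modules whose homology in degree $t$ is $\Tor^R_t(M,N)$. Any nonzero finitely generated module over a noetherian ring has at least one associated prime, so if $\Tor^R_t(M,N)\neq 0$ then $\Ass_R(\Tor^R_t(M,N))\neq \emptyset$. Combined with the inclusion $\Ass_R(\Tor^R_t(M,N))\subseteq\{\frak{p}\}$, equality must hold.

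There is no real obstacle; the only point worth flagging is the verification that $\Tor^R_t(M,N)$ is finitely generated, which is where the finite projective dimension hypothesis is used in an essential way (to guarantee that the resolution can be chosen finite and of finite rank at each step). Everything else is a routine invocation of Proposition \ref{5}.
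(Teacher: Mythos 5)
Your proof is correct and matches the paper's argument exactly: both derive the inclusion by applying Proposition~\ref{5} with $\Ass_R(N)=\{\fp\}$, and both deduce equality in the nonzero case from the fact that a nonzero module over a noetherian ring has a nonempty set of associated primes. The only slight overkill is your detour through finite generation of $\Tor^R_t(M,N)$; over a noetherian ring \emph{any} nonzero module has an associated prime (e.g.\ \cite[Theorem 6.1]{Mat}), so that step can be skipped entirely.
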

\begin{proof}
The first part is in	Proposition \ref{5}. To see the particular case note that
$$\emptyset  \neq  \Ass_R(\Tor^R_t(M,N))\subseteq\{\frak{p}\},$$ and so $\Ass_R(\Tor^R_t(M,N))=\{\frak{p}\}$.
	\end{proof}
\begin{corollary}\label{8}  Assume that $(R,\frak{m})$ is a local domain and $M$ is a finitely generated $R$-module with
$\pd_RM=1$. Then $\Ass_R(\Tor^R_1(M,M))=\Ass_RM \setminus \{0\}$.
\end{corollary}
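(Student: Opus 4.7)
The plan is to apply Corollary \ref{5.5} directly to the pair $(M,M)$, which gives
$$\Ass_R(\Tor^R_1(M,M))=\Ass_RM\cap \text{NF}(M).$$
So the task reduces to verifying the set-theoretic equality $\Ass_RM\cap \text{NF}(M)=\Ass_RM\setminus\{0\}$, and both inclusions follow from soft facts about domains.

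For the inclusion $\subseteq$, I would note that the zero ideal can never lie in $\text{NF}(M)$: since $R$ is a domain, $R_{(0)}$ is a field, hence every $R_{(0)}$-module (in particular $M_{(0)}$) is free. Therefore $\Ass_R M\cap\text{NF}(M)$ automatically misses $\{0\}$.

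For the reverse inclusion, I take any nonzero $\fp\in\Ass_R M$ and show $\fp\in\text{NF}(M)$. Since $R$ is a domain and $\fp\neq 0$, the local domain $R_\fp$ has positive depth (any nonzero element of $\fp R_\fp$ is a nonzerodivisor). On the other hand, $\fp\in\Ass_R M$ gives $\depth_{R_\fp} M_\fp=0$. If $M_\fp$ were free then its depth would coincide with $\depth R_\fp\geq 1$, a contradiction. Hence $M_\fp$ is not free and $\fp\in\text{NF}(M)$.

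There is essentially no obstacle beyond these two short observations; the substance of the proof is already encoded in Corollary \ref{5.5}, and the role of the domain hypothesis is merely to guarantee (i) that $R_{(0)}$ is a field and (ii) that every nonzero prime survives to a local domain of positive depth. Putting the two inclusions together yields the claim.
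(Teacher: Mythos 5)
Your proof is correct and follows essentially the same route as the paper: apply Corollary \ref{5.5} and verify the identity $\Ass_R M\cap \text{NF}(M)=\Ass_R M\setminus\{0\}$ by the same two localization arguments. The only cosmetic difference is that for the reverse inclusion the paper invokes the Auslander--Buchsbaum formula to conclude $\pd_{R_\fp}(M_\fp)=1$, while you reach the weaker but sufficient conclusion that $M_\fp$ is not free by directly comparing $\depth_{R_\fp} M_\fp=0$ with $\depth R_\fp\geq 1$ (a free module has the same depth as the ring); both arguments amount to the same thing.
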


\begin{proof}Let $(-)_0$ be the localization with respect to  $0\in\Spec(R)$. Since $M_0$ is an $R_0$-vector space, it follow that $0\notin \text{NF}(M)$. Let $0\neq \frak{p}\in \Ass_RM$. Then $\depth_{R_{\frak{p}}}(M_{\frak{p}})=0$, and so by the Auslander-Buchsbaum formula, we deduce that
$$1\leq \depth (R_{\frak{p}})=\pd_{R_{\frak{p}}}(M_{\frak{p}})+\depth_{R_{\frak{p}}}(M_{\frak{p}})=\pd_{R_{\frak{p}}}(M_{\frak{p}})\leq \pd_RM\leq 1,$$i.e.,  $\pd_{R_{\frak{p}}}(M_{\frak{p}})=1$. In particular, $M_{\frak{p}}$ is not a free $R_{\frak{p}}$-module.
Hence $\text{NF}(M)\cap \Ass_RM=\Ass_RM \setminus \{0\}$, and Corollary  \ref{5.5} completes the proof.
\end{proof}
\begin{corollary}\label{10} Let $(R,\frak{m})$ be a local ring, $M$ and $N$ be  finitely generated. Assume that $M$
is a locally free on the punctured spectrum, and it has finite projective dimension $t\geq 1$. Then $\Tor^R_t(M,N)\neq 0$ if and only if $\depth_RN=0$.
\end{corollary}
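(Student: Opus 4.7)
The plan is to treat the two implications separately and to show that both follow by combining the two results already in hand: Auslander's Fact \ref{lemma1} for one direction and Proposition \ref{5} for the other, together with the observation that the ``locally free on the punctured spectrum'' hypothesis pins down the non-free locus of $M$.

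For the backward implication, suppose $\depth_R N=0$. Since $\pd_R M=t<\infty$, Fact \ref{lemma1} applies verbatim and delivers $\Tor^R_t(M,N)\neq 0$ (in fact of depth zero). So this direction is immediate.

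For the forward implication, suppose $\Tor^R_t(M,N)\neq 0$. First I would note that since $M$ and $N$ are finitely generated and $M$ admits a finite free resolution of finite rank (coming from $\pd_R M=t<\infty$), the module $\Tor^R_t(M,N)$ is itself finitely generated, and thus $\Ass_R(\Tor^R_t(M,N))\neq \emptyset$. Next, because $M$ is locally free on the punctured spectrum and $t\geq 1$ (so $M$ is not free), we have $\text{NF}(M)=\{\frak{m}\}$. Applying Proposition \ref{5},
\[
\emptyset\neq \Ass_R(\Tor^R_t(M,N))\subseteq \Ass_R N\cap \text{NF}(M)\subseteq \Ass_R N\cap\{\frak{m}\},
\]
which forces $\frak{m}\in \Ass_R N$, i.e. $\depth_R N=0$, as required.

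I do not expect a serious obstacle here; the only point that requires a little care is justifying $\text{NF}(M)=\{\frak{m}\}$ from the hypotheses, and finite generation of $\Tor^R_t(M,N)$ so that it admits an associated prime. Both are routine.
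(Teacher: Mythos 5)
Your proof is correct and follows essentially the same route as the paper: Fact \ref{lemma1} for the backward direction, and Proposition \ref{5} together with $\text{NF}(M)=\{\frak{m}\}$ and nonemptiness of the associated primes of a nonzero finitely generated module for the forward direction. The extra remarks you add (finite generation of $\Tor^R_t(M,N)$, the identification $\text{NF}(M)=\{\frak{m}\}$) are exactly the routine justifications the paper leaves implicit.
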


\begin{proof} First, assume that $\depth_RN=0$. Fact \ref{lemma1} implies that $\Tor^R_t(M,N)\neq 0$.
Conversely, suppose $\Tor^R_t(M,N)\neq 0$. In view of Proposition  \ref{5} we observe that $$\emptyset \neq \Ass_R(\Tor^R_t(M,N))\subseteq
\text{NF}(M)\cap \Ass_RN=\{\frak{m}\}\cap \Ass_RN.$$ Thus $\frak{m}\in \Ass_RN$, and so $\depth_RN=0$.
\end{proof}

\begin{corollary}\label{26} Let $(R,\frak{m})$ be a complete local ring and $\frak{p}$ a Cohen-Macaulay prime ideal of $R$ of
	finite projective dimension. The following assertions are true:
	\begin{itemize}
		\item[(i)]  $R$ is a Cohen-Macaulay integral domain.
		\item[(ii)] $\Ass_R(\Tor^R_{\Ht \fp}(R/\fp,R/\fp))=\{\fp\}$.
			\item[(iii)] $\dim (\Tor^R_{\Ht \fp}(R/\fp,R/\fp))=\dim R/\fp $.
				\item[(iv)] $\Ann_R(\Tor^R_{\Ht \fp}(R/\fp,R/\fp))=\fp$.
	\end{itemize}
\end{corollary}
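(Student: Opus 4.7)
The plan is to establish (i) first, which yields $\pd_R(R/\fp) = \Ht \fp$ and thereby makes Corollary \ref{7} applicable, and then to deduce (ii)--(iv) from that corollary together with localization at $\fp$. For the Cohen-Macaulayness of $R$ in (i), the Auslander-Buchsbaum formula (using $R/\fp$ Cohen-Macaulay) gives $\pd_R(R/\fp) = \depth R - \dim(R/\fp)$, while the New Intersection Theorem supplies $\pd_R(R/\fp) \geq \dim R - \dim(R/\fp)$; combining these forces $\depth R \geq \dim R$, so $R$ is Cohen-Macaulay and, by catenarity, $\pd_R(R/\fp) = \Ht \fp$.

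The domain assertion of (i) is the subtle part. I would appeal to the Auslander Zero-Divisor Theorem (a consequence of the New Intersection Theorem): any $s \notin \fp$ is a non-zero-divisor on the finite-pd module $R/\fp$, hence on $R$; so every associated prime of $R$ lies inside $\fp$. Since $(R/\fp)_\fp = \kappa(\fp)$ has finite projective dimension over $R_\fp$, the latter is regular, hence a domain with $\Ass_{R_\fp} R_\fp = \{(0)\}$. The standard bijection between $\Ass R$ and $\Ass_{R_\fp} R_\fp$ then identifies $\Ass R$ with a single prime $\fq_0$, which by Cohen-Macaulay unmixedness is the unique minimal prime of $R$. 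Finally, $\fq_0 R_\fp = 0$ means each $a \in \fq_0$ is killed by some $s \notin \fp \supseteq \fq_0$; since $s$ then avoids $\fq_0 = \bigcup \Ass R$, it is a non-zero-divisor, forcing $a = 0$. Hence $\fq_0 = 0$ and $R$ is a domain.

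For (ii), with $\pd_R(R/\fp) = \Ht \fp$ from (i) and $R/\fp$ being $\fp$-primary, Corollary \ref{7} applies with $M = N = R/\fp$ and $t = \Ht \fp$, yielding $\Ass_R(\Tor^R_{\Ht \fp}(R/\fp,R/\fp)) \subseteq \{\fp\}$. To promote this to equality it suffices to verify the $\Tor$-module is nonzero, which I would check by localizing at $\fp$: $R_\fp$ is regular of dimension $\Ht \fp$, and the Koszul resolution of $\kappa(\fp)$ yields $\Tor^{R_\fp}_{\Ht \fp}(\kappa(\fp),\kappa(\fp)) \cong \kappa(\fp) \neq 0$, whence $\Tor^R_{\Ht \fp}(R/\fp,R/\fp) \neq 0$.

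For (iii) and (iv), set $T := \Tor^R_{\Ht \fp}(R/\fp,R/\fp)$. The inclusion $\Supp T \subseteq \Supp(R/\fp) = V(\fp)$ combined with $\fp \in \Ass T \subseteq \Supp T$ forces $\Supp T = V(\fp)$, giving $\dim T = \dim R/\fp$ and proving (iii). For (iv), $\fp$ annihilates both tensor arguments and hence $T$, so $\fp \subseteq \Ann T$; conversely, $\fp \in \Ass T$ produces $x \in T$ with $\Ann(x) = \fp$, whence $\Ann T \subseteq \fp$. The main obstacle is the integral-domain assertion of (i), which requires the full strength of the Intersection Theorem circle (via the Auslander Zero-Divisor Theorem) rather than only Auslander-Buchsbaum, together with careful bookkeeping of associated primes under localization.
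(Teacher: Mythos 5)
Your proof is correct but takes a genuinely different route from the paper's in part (i). The paper first invokes --- attributing it to the intersection theorem and citing Roberts \cite{int} --- the fact that a complete local ring admitting a Cohen--Macaulay prime of finite projective dimension is an integral domain; it then uses catenarity and equidimensionality of complete local domains to get $\Ht\fp + \dim R/\fp = \dim R$, applies Auslander--Buchsbaum to see $\pd_R(R/\fp) \leq \Ht\fp$, and forces equality (hence Cohen--Macaulayness of $R$) by localizing at $\fp$ and computing a nonvanishing top $\Tor$ over the regular ring $R_\fp$. You instead prove Cohen--Macaulayness directly from the inequality $\dim R \leq \dim(R/\fp) + \pd_R(R/\fp)$, a standard consequence of the Intersection Theorem, combined with Auslander--Buchsbaum, and then derive the domain conclusion from scratch: the Auslander Zero-Divisor Theorem pushes $\Ass R$ inside $\fp$, regularity of $R_\fp$ plus the localization bijection on associated primes shows $\Ass R$ is a single prime $\fq_0$ with $\fq_0 R_\fp = 0$, and unmixedness of $\Ass R$ in a Cohen--Macaulay ring together with a zero-divisor argument kills $\fq_0$. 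In effect you unpack what the paper compresses into a citation, at the cost of assembling a slightly larger toolkit of intersection-theoretic consequences. Your (ii) and (iii) track the paper's argument. For (iv) your argument --- that $\fp\in\Ass T$ produces an element $x\in T$ with $\Ann_R(x)=\fp$, so $\Ann_R T \subseteq \fp$ --- is cleaner than the paper's, which instead compares $\dim R/\Ann_R T$ with $\dim R/\fp$ using (iii).
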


\begin{proof} (i) Recall that $R$ is integral domain by intersection theorem. For more details, see \cite{int}. In particular, $R$ is equidimensional
	and catenary. This allows us to apply \cite[Page 250]{Mat} and deduce that$$\dim R-\dim_R(R/\fp)=\Ht\fp=:t.$$  The Auslander-Buchsbaum
	formula implies that
	$$\begin{array}{ll}
	\pd_R(R/\fp)&=\depth_R R-\depth_R(R/\fp)\\
	&=\depth_R R-\dim_R(R/\fp)\\
	&\stackrel{(\ast)}\leq \dim R-\dim_R(R/\fp)\\
	&=\Ht\fp\\
	&=t.
	\end{array}
	$$
	Since $\pd_R(R/\fp)<\infty$, it follows that $\pd_{R_{\fp}}(R_{\fp}/\fp R_{\fp})<\infty$, and so the local ring $R_{\fp}$
	is regular. Now,
	$$\Tor_t^R (R/\fp,R/\fp)_{\fp}\cong \Tor_{\dim_{R_{\fp}}}^{R_{\fp}}(R_{\fp}/\fp R_{\fp},R_{\fp}/\fp R_{\fp})\neq 0.$$
	Consequently, $\Tor_t^R (R/\fp,R/\fp)\neq 0$. So, $\pd_R(R/\fp)=t$. In particular, $(\ast)$
	is an equality, i.e., $\dim R=\depth_R R$.
	
	(ii) Since $\Ass_R(R/\fp)={\fp}$,  and without loss of generality, we may and do assume that $t>0$.  According to Proposition   \ref{5},
	$\Ass_R(\Tor_t^ R(R/\fp,R/\fp))\subseteq \lbrace \fp\rbrace$. As $\Tor_t^R(R/\fp,R/\fp)\neq 0$, it follows that
	$\Ass_R(\Tor_t^ R(R/\fp,R/\fp))\neq \emptyset$, and so $\Ass_R(\Tor^R_{\Ht \fp}(R/\fp,R/\fp))=\{\fp\}$.
	
	(iii) Thanks to part ii) we know    $\Supp(\Tor_t^R(R/\fp,R/\fp)) =\Var(\fp)$. From this,   the desired claim is clear.
	
	(iv) Clearly, $\fp\subseteq\Ann_R(\Tor^R_{\Ht \fp}(R/\fp,R/\fp))$. If the inclusion
	were be strict, we should had $$\dim({R}/{\Ann_R(\Tor^R_{\Ht \fp}(R/\fp,R/\fp))})< \dim R/\fp ,$$because $\fp$ is prime.
	This is in contradiction with the third item. So, $\Ann_R(\Tor^R_{\Ht \fp}(R/\fp,R/\fp))=\fp$,
	as claimed.
\end{proof}

\begin{fact}\label{aus}(See \cite[Theorem 1.2]{Au})
	Let $R$ be any local ring and $\pd_R (M)<\infty$. Let $q$ be the largest number such that  $\Tor_q^R(M, N)\neq0$. If $\depth_R (\Tor_q^R(M, N))\leq1$ or $q=0$,
	then $$\depth_R(N)=\depth_R (\Tor_q^R(M, N))+\pd_R(M)-q.$$
\end{fact}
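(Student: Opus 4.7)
The plan is to prove the formula by induction on $p := \pd_R(M)$. The base case $p = 0$ is immediate: $M$ is free, so $q = 0$, $M \otimes_R N \cong N^{\oplus r}$, and the formula reduces to $\depth_R(N) = \depth_R(N)$.

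For the inductive step $p \ge 1$, choose a short exact sequence $0 \to K \to F \to M \to 0$ with $F$ free, so $\pd_R(K) = p - 1$; the associated long $\Tor$ sequence yields shift isomorphisms $\Tor_{i+1}^R(M,N) \cong \Tor_i^R(K,N)$ for $i \ge 1$, together with
$$0 \to \Tor_1^R(M,N) \to K \otimes_R N \to X \to 0,\qquad X \subseteq F \otimes_R N \cong N^{\oplus r}.$$
When $q \ge 2$, the top nonvanishing $\Tor$ for the pair $(K,N)$ sits in degree $q-1 \ge 1$ and coincides with $\Tor_q^R(M,N)$, so its depth is still $\le 1$; the induction hypothesis applied to $K$ then gives $\depth_R(N) = \depth_R(\Tor_q^R(M,N)) + (p-1) - (q-1) = \depth_R(\Tor_q^R(M,N)) + p - q$. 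When $q = 1$, one has $\Tor_i^R(K,N) = 0$ for all $i \ge 1$, so the induction hypothesis applied to $(K,N)$ in the $q=0$ branch yields $\depth_R(N) = \depth_R(K \otimes_R N) + (p-1)$; it then suffices to prove $\depth_R(K \otimes_R N) = \depth_R(\Tor_1^R(M,N))$. Since $X \hookrightarrow N^{\oplus r}$, every $N$-regular element is $X$-regular, whence $\depth_R(X) \ge \depth_R(N)$; combining this with the hypothesis $\depth_R(\Tor_1^R(M,N)) \le 1$ and applying the depth lemma to the displayed sequence, with a short case split according to whether $\depth_R(\Tor_1^R(M,N))$ is $0$ or $1$, forces the two outer depths to coincide.

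The remaining $q = 0$ case is the classical depth formula $\depth_R(M) + \depth_R(N) = \depth_R(R) + \depth_R(M \otimes_R N)$, equivalent via Auslander-Buchsbaum to $\depth_R(N) = \depth_R(M \otimes_R N) + p$; it is established by a separate induction on $\depth_R(N)$, the base $\depth_R(N) = 0$ forcing $p = 0$ by Fact \ref{lemma1}. The main obstacle is the $q = 1$ branch: the hypothesis $\depth_R(\Tor_1^R(M,N)) \le 1$ is indispensable to upgrade the depth-lemma inequalities into equalities, and without it the syzygy shift alone yields only one-sided bounds.
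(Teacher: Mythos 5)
The paper does not prove this statement; it is recorded as a Fact with a citation to Auslander's \cite[Theorem 1.2]{Au}, so there is no in-paper proof to compare against. Your skeleton (induct on $p=\pd_R M$, use a syzygy $K$ to shift degrees, split by $q\ge 2$, $q=1$, $q=0$) is the right one, and the $q\ge2$ step is fine. However, the $q=1$ step as written rests on a false inference: from $X\subseteq N^{\oplus r}$ and ``every $N$-regular element is $X$-regular'' you conclude $\depth_R X\ge\depth_R N$. That is not valid --- a submodule can have strictly smaller depth than its ambient module (already $\depth\fm=1<2=\depth R$ for $R=k[[x,y]]$, with $\fm\subseteq R$); a single regular element only gives $\depth_R X\ge\min\{1,\depth_R N\}$. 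Luckily that weaker bound suffices for your argument: in the subcase $\depth_R T_1=1$ you only need $\depth_R X\ge1$, which follows from $\depth_R N\ge1$, and the latter is forced because if $\depth_R N=0$ then Fact~\ref{lemma1} would give $q=p$ and $\depth_R T_1=0$, contradicting $q=1$ with $\depth_R T_1=1$. So the step can be repaired, but you should state the correct (weaker) bound.

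The $q=0$ branch is compressed past the point of rigor. The ``separate induction on $\depth_R N$'' is not self-contained: after choosing $x$ regular on $N$, the pair $(M,N/xN)$ need \emph{not} still have $q=0$, because $\Tor_1^R(M,N/xN)\cong(M\otimes_R N)[x]$, which is nonzero precisely when $x$ is a zerodivisor on $M\otimes_R N$ --- and this can happen with $\depth_R N\ge1$ (e.g. whenever $\depth_R(M\otimes_R N)=0$). In that situation the inner induction exits the $q=0$ regime and lands in the $q=1$ case for the \emph{same} $p$, so the argument is circular unless you explicitly order things so that within each fixed $p$ the $q=1$ case (which reduces to $\pd=p-1$ via the outer IH) is settled before the $q=0$ case. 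This is precisely the delicate point in Auslander's Theorem~1.2 and should be spelled out rather than waved off as ``classical''.
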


\begin{corollary}\label{26.5} Adopt  the  assumption  of Corollary \ref{26} and suppose in addition that $\dim R/ \fp\leq2$. Then
 $ \Tor^R_{\Ht \fp}(R/\fp,R/\fp)  $ is   Cohen-Macaulay.
\end{corollary}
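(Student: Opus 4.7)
The plan is to combine the dimension computation from Corollary \ref{26}(iii) with Auslander's depth formula (Fact \ref{aus}) and exploit the fact that the formula in Fact \ref{aus} is applicable precisely when $\depth$ of the top Tor is $\leq 1$, which for low $\dim R/\fp$ is automatic or easily forced by contradiction.

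Set $t:=\Ht\fp$, $d:=\dim R/\fp$, and $T:=\Tor^R_t(R/\fp,R/\fp)$. From Corollary \ref{26}(iii) we already have $\dim_R T=d$, so it suffices to show $\depth_R T = d$. The proof of Corollary \ref{26} shows $\pd_R(R/\fp)=t$; hence for $M=N=R/\fp$, the largest index $q$ with $\Tor^R_q(M,N)\ne 0$ equals $t$, and $\pd_R(M)-q=0$. Thus whenever Fact \ref{aus} is applicable, its conclusion simplifies to
\[
\depth_R(N) = \depth_R(T),
\]
i.e.\ $\depth_R(T)=d$, since $N=R/\fp$ is Cohen-Macaulay of dimension $d$.

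First I would dispose of the degenerate case $t=0$: then $\fp=0$ (as $R$ is a domain by Corollary \ref{26}(i)) and $T=R$ is Cohen-Macaulay. Assume $t\geq 1$. The case $d=0$ is trivial since any module of dimension zero is Cohen-Macaulay. For $d=1$ one has $\depth_R T \leq \dim_R T = 1$, so the hypothesis ``$\depth_R T\leq 1$'' of Fact \ref{aus} is automatically met, giving $\depth_R T = d = 1$. For $d=2$, argue by contradiction: if $T$ were not Cohen-Macaulay then $\depth_R T \leq 1$, so Fact \ref{aus} applies and yields $\depth_R T = d = 2$, contradicting $\depth_R T \leq 1$. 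Hence $\depth_R T = 2 = d$.

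I do not anticipate a serious obstacle; the main point is simply the observation that the hypothesis ``$\depth \leq 1$'' in Fact \ref{aus} is exactly what lets the argument run for $d\leq 2$ (and explains why the statement is restricted to this range: for $d\geq 3$ one could have $\depth_R T=2<d$ without Fact \ref{aus} ever being available to rule it out). The only minor care needed is to explicitly invoke Corollary \ref{26} to know $T\ne 0$ and $\pd_R(R/\fp)=t=q$, so that the depth formula degenerates to the clean identity $\depth_R N=\depth_R T$.
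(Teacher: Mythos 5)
Your proof is correct and follows essentially the same route as the paper: reduce to the case $\dim R/\fp=2$, observe that failure of Cohen--Macaulayness forces $\depth_R T\leq 1$, apply Fact~\ref{aus} (with $\pd_R(R/\fp)=\Ht\fp=q$ making the formula collapse to $\depth_R N=\depth_R T$), and derive a contradiction. The only difference is cosmetic: the paper first pins down $\depth_R T=1$ exactly via $\Ass_R T=\{\fp\}\ne\{\fm\}$ before plugging into the formula, whereas you correctly note that the hypothesis $\depth_R T\leq 1$ is all Fact~\ref{aus} requires, which slightly streamlines the contradiction.
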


\begin{proof} We only  deal with the case $\dim R/ \fp=2$. Thanks to Corollary \ref{26}, 
	$\dim (\Tor^R_{\Ht \fp}(R/\fp,R/\fp))=2 $, and recall that 
	$\depth_R ( \Tor^R_{\Ht \fp}(R/\fp,R/\fp))\leq \dim (\Tor^R_{\Ht \fp}(R/\fp,R/\fp))=2 $.
		Suppose on the way of contradiction that 
	 $ \Tor^R_{\Ht \fp}(R/\fp,R/\fp)  $ is not  Cohen-Macaulay.
Since  $\zd(-)=\bigcup _{P\in \Ass(-)}P$, we observe that any $x\in \fm\setminus \fp$ is a regular sequence over 	 $ \Tor^R_{\Ht \fp}(R/\fp,R/\fp)  $.
It follows that $\depth_R( \Tor^R_{\Ht \fp}(R/\fp,R/\fp)  )=1$. This enables us to apply Fact \ref{aus}. We combine this along with Corollary \ref{26} to see

	$$\begin{array}{ll}
2&=\dim R/\fp\\
&=\depth_R(R/\fp)\\
&=\depth_R ( \Tor^R_{\Ht \fp}(R/\fp,R/\fp))+\pd_R(R/\fp)-{\Ht \fp}\\
&=1+\depth R-\depth_R(R/\fp)-\Ht\fp\\
&=1+\dim R-\dim(R/\fp)-\Ht\fp\\
&=1,
\end{array}
$$which is a contradiction. So,  $ \Tor^R_{\Ht \fp}(R/\fp,R/\fp)  $ is   Cohen-Macaulay.
\end{proof}

It may be nice to recover  $M$ from  $\Tor^R_+(M,M)$. As an easy fact:

\begin{fact}\label{11}  Let $(R,\frak{m})$ be a local ring and $I$ an ideal of $R$ which is generated by an $R$-regular
sequence of length $t$. Then $\Tor^R_i(R/I,R/I)= {\oplus}R/I$ for all $1\leq i\leq t$. In particular,  the following holds. 	\begin{itemize}
	\item[(i)] $\Tor^R_i(R/I,R/I)$ is Cohen-Macaulay iff $R$ is   Cohen-Macaulay.
	\item[(ii)] $\Ass_R(\Tor^R_i(R/I,R/I))=\Ass_R R/I$.\item[(iii)]$\Ann_R(\Tor^R_{i}(R/I,R/ I))=I$.
\end{itemize}  
\end{fact}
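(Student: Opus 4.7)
The plan is to compute $\Tor_i^R(R/I,R/I)$ directly via the Koszul complex. Since $x_1,\dots,x_t$ is an $R$-regular sequence, the Koszul complex $K_\bullet:=K_\bullet(x_1,\dots,x_t;R)$ is a finite free resolution of $R/I$ with $K_i\cong R^{\binom{t}{i}}$. Tensoring with $R/I$ yields the Koszul complex $K_\bullet(x_1,\dots,x_t;R/I)$, whose differentials are induced by multiplication by the $x_j$. Since each $x_j$ lies in $I$ and hence acts as zero on $R/I$, every differential in the tensored complex vanishes. Consequently
\[
\Tor_i^R(R/I,R/I)\;\cong\;K_i\otimes_R R/I\;\cong\;(R/I)^{\binom{t}{i}}
\]
for every $0\le i\le t$, which is the claimed direct-sum presentation.

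The three particular assertions follow formally from this description. For (ii), associated primes are compatible with finite direct sums, so $\Ass_R(\Tor_i^R(R/I,R/I))=\Ass_R(R/I)$. For (iii), the annihilator of a finite direct sum is the intersection of the annihilators of the summands, which gives $\Ann_R(\Tor_i^R(R/I,R/I))=\Ann_R(R/I)=I$. For (i), a finite direct sum of copies of $R/I$ is Cohen-Macaulay (as an $R$-module) if and only if $R/I$ is, so it remains to verify that $R/I$ is Cohen-Macaulay precisely when $R$ is. This is standard: from regularity of the sequence we have $\depth_R(R/I)=\depth R-t$, from Krull's height inequality $\dim(R/I)\ge\dim R-t$, and always $\depth R\le\dim R$; chasing these estimates forces equalities in both directions of the equivalence.

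Since the whole argument reduces to a textbook Koszul-complex calculation followed by three direct-sum generalities, there is no serious obstacle. The only step requiring mild care is the equivalence in (i), where both implications must be extracted from the depth/dimension formulas for regular sequences recalled above.
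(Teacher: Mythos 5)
Your approach is the same as the paper's: tensoring the Koszul resolution of $R/I$ with $R/I$ makes every differential vanish (since each $x_j$ acts as zero on $R/I$), giving $\Tor_i^R(R/I,R/I)\cong (R/I)^{\binom{t}{i}}$ for $0\le i\le t$; the paper records exactly this computation and then declares the three consequences to follow.

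The one place you are slightly loose is in item (i), in the direction ``$R$ Cohen--Macaulay $\Rightarrow$ $R/I$ Cohen--Macaulay.'' You cite the Krull inequality $\dim(R/I)\ge\dim R - t$ together with $\depth_R(R/I)=\depth R - t$. These two facts do prove ``$R/I$ CM $\Rightarrow$ $R$ CM'': if $\depth(R/I)=\dim(R/I)$ then $\depth R - t\ge\dim R - t$, forcing $\depth R=\dim R$. But for the converse one needs the reverse inequality $\dim(R/I)\le\dim R - t$, which your stated estimates do not supply: from $\depth R=\dim R$ you get $\depth(R/I)=\dim R-t\le\dim(R/I)$, and this does not yet close up to an equality. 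The missing ingredient is that $\dim(R/I)=\dim R - t$ holds for \emph{any} $R$-regular sequence, not merely in the Cohen--Macaulay case: a regular element lies outside every associated prime, hence outside every minimal prime, so any prime minimal over $(x_1)$ is non-minimal in $R$ and one obtains $\dim(R/x_1R)\le\dim R - 1$, whence equality with Krull, and one iterates along the sequence. With that equality in hand, both directions of (i) go through as you claim; the rest of your argument is fine.
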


\begin{proof} Suppose $I$ is generated by an $R$-regular sequence $\underline{x}:=x_1, \ldots, x_t$, and set $M:=R/I$. The Koszul complex
of $R$ with respect to the sequence $\underline{x}$ has the form
$$0 \longrightarrow R \longrightarrow R^{t} \longrightarrow \dots \longrightarrow R \longrightarrow  0,$$ and it provides a
free resolution of $M$. Thus $$\Tor_i^R(M,M)=H( M^{n_{i+1}} \stackrel{0}\longrightarrow M^{n_i}\stackrel{0}\longrightarrow M^{n_{i-1}})=M^{n_i},$$ and so the desired  claims follow.
\end{proof}

By the linearity, $I\subseteq\Ann_R(\Tor^R_{i}(R/I,R/ I))$. There are many examples
for which the inequality is strict.
A natural question arises: When is $\Ann_R(\Tor^R_{i}(R/I,R/ I))=I$? Let us ask more.
Inspired by Vasconcelos (resp. Simis) \cite{v}, who asked the initial cases,  we ask:

\begin{question}Let $I$ be an ideal and $t>0$.
	When is $\Tor^R_t(R/I,R/I)$ free as an $R/I$-module? 
\end{question}

It may be nice to note that there are 1-dimensional
prime ideals with high number of generators such as $\fp$ in a 3-dimensional regular local rings.
Thus, the assumption of Corollary \ref{26} holds, and  we have
$\Ann(\Tor^R_2(R/\fp,R/\fp))=\fp$ but $\fp$ is not generated be a regular sequences.
However, $\Tor^R_2(R/\fp,R/\fp)$ is not free  as an $R/\fp$-module.

\section{Freeness criteria via vanishing of $\Tor_{1}$}

We start with:
\begin{definition}\label{1} A  nonzero $R$-module $L$ is called (strong)  Lichtenbaum if for every (not-necessarily)  finitely generated $R$-module $F$, the vanishing of
	$\Tor^R_1(L,F)$ implies that $F$ is (flat) free.
\end{definition}

\begin{remark}
 In most of the applications,  we assume $F$ (resp. $L$) is finitely generated. Since the ring is local,
any finitely generated flat module is free.\footnote{for the case $F$ (resp. $L$)  is not-necessarily  finitely generated,    see Remark \ref{23} (resp. Proposition \ref{15}).}
\end{remark}

\begin{lemma}\label{tf}
 Let $(R,\frak{m})$ be a local ring and let $L$ be a module such that
  $$\Tor^{R}_{1}(L,F)=0\Longrightarrow\emph{F is faithful}.$$
Then $\depth_R(L)=0$.
\end{lemma}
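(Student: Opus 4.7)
The strategy is a direct contrapositive. Suppose toward contradiction that $\depth_R(L) > 0$. Then there exists $x \in \fm$ which is a nonzerodivisor on $L$. The plan is to produce a specific module $F$ that (a) satisfies the Tor-vanishing hypothesis and (b) manifestly fails to be faithful, thereby contradicting the defining property of $L$.

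The natural candidate is $F := R/(x)$. First I would observe that $x \neq 0$ in $R$: since $L \neq 0$ (the hypothesis is vacuously false otherwise, as one sees by testing with $F = R/\fm$, which is not faithful), and $x$ is $L$-regular, $x$ cannot be the zero element. Moreover $x$ is not a unit because $x \in \fm$, so $(x)$ is a proper nonzero ideal and $\Ann_R(R/(x)) = (x) \neq 0$; in particular $R/(x)$ is not faithful.

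Next I would verify the Tor-vanishing. Tensoring the short exact sequence
\[
0 \lo R \stackrel{x}{\lo} R \lo R/(x) \lo 0
\]
with $L$ and using that $R$ is flat gives
\[
\Tor_1^R(L, R/(x)) \;\cong\; \ker\bigl(L \stackrel{x}{\lo} L\bigr) \;=\; 0,
\]
the last equality holding precisely because $x$ is a nonzerodivisor on $L$. By the assumed property of $L$, the module $R/(x)$ must then be faithful, contradicting the previous paragraph. Hence $\depth_R(L) = 0$.

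The argument is routine once one guesses the test module $F = R/(x)$; the only delicate point — not really an obstacle, but worth checking — is confirming that $L$ is nonzero and that a depth-one element of $\fm$ is automatically nonzero in $R$, so that $R/(x)$ really does witness the failure of faithfulness. No finite generation of $L$ or $F$ is needed anywhere, which is consistent with the generality allowed by Definition \ref{1}.
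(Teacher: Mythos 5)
Your proof is correct and follows essentially the same route as the paper: both argue by contradiction, take an $L$-regular element $x\in\fm$, set $F=R/(x)$, compute $\Tor_1^R(L,R/(x))=0$ from the short exact sequence $0\to R\xrightarrow{x}R\to R/(x)\to 0$, and conclude $R/(x)$ would have to be faithful, contradicting $\Ann_R(R/(x))=(x)\neq 0$. The only cosmetic difference is that the paper writes out a fragment of a free resolution of $R/xR$ to compute the Tor, whereas you use the two-term complex directly; your extra remarks verifying $L\neq 0$ and $x\neq 0$ are sound.
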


\begin{proof}
	On the contrary, assume that $\depth_RL>0$. Then there is an $L$-regular element $x\in \frak{m}$.
 Consider the free resolution 	of the $R$-module $R/xR$:
	$$\xymatrix{&\cdots \ar[r]&R^n\ar[r]^{}\ar[d]_{_{\pi}}&R\ar[r]^{x}&R\ar[r]^{}&R/xR\ar[r]^{}&0\\
		&& (0:_Rx)\ar[ur]_{\subseteq}\ar[d]\\
		& 0\ar[ur]& 0
		&&&}$$
 By definition, $$\Tor^{R}_{1}(L,R/xR)=\text{H}(L^n\longrightarrow L\overset{x}\longrightarrow L)=0.$$
According to the assumption, it follows that $R/xR$ is faithful, which is our desired contradiction.
	\end{proof}

\begin{remark}
	In Lemma \ref{tf}
	one can not replace the faithful assumption with the torsion-free. Indeed, let $(R,\fm)$
	be of depth zero and of positive dimension.
This gives us a non maximal prime ideal $\fp$.	Let $L:=R/  \fp$. Since  $\depth(R)=0$,
any module is torsion-free. In particular, the following implication holds: $$\Tor^{R}_{1}(L,F)=0\Longrightarrow\emph{F is torsion-free}.$$In order to see the left hand side is not empty, let $F:=R/xR$ where $x\in\fm\setminus \fp $. It is easy to see $\Tor^{R}_{1}(L,F)=0$. 
But, $\depth(L)>0$, because $x$ is regular over it.
\end{remark}

\begin{corollary}\label{12}  Let $(R,\frak{m})$ be a local ring  and $L$ a finitely generated
	Lichtenbaum module for $R$. Then
  $\depth_RL=0$.
\end{corollary}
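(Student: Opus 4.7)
The plan is to reduce immediately to Lemma \ref{tf}. A Lichtenbaum module $L$ has the property that $\Tor^R_1(L,F)=0$ forces $F$ to be free. Over a local ring, a finitely generated free module is either zero or isomorphic to some $R^n$ with $n\geq 1$, in which case its annihilator is $0$; in particular, any nonzero free module is faithful. Since the witnessing module produced inside Lemma \ref{tf} (namely $F:=R/xR$ with $x\in\fm$ nonzero) is automatically nonzero, the Lichtenbaum hypothesis implies the faithfulness hypothesis of Lemma \ref{tf}, and we conclude $\depth_R L=0$.

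To make this self-contained, I would argue by contradiction. Suppose $\depth_R L>0$ and choose an $L$-regular element $x\in\fm$. Then $x\neq 0$, since $L\neq 0$. As in the proof of Lemma \ref{tf}, apply a free resolution of $R/xR$ of the form
\[
\cdots\longrightarrow R^n\longrightarrow R\stackrel{x}{\longrightarrow} R\longrightarrow R/xR\longrightarrow 0,
\]
tensor with $L$, and read off
\[
\Tor^R_1(L,R/xR)\;=\;\ker\bigl(x\colon L\to L\bigr)\;=\;0,
\]
using that $x$ is $L$-regular.

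By the Lichtenbaum assumption, $R/xR$ must then be free. But $R/xR$ is a cyclic $R$-module; any free cyclic module over a local ring is isomorphic to $R$ itself, so $xR=\Ann_R(R/xR)=\Ann_R(R)=0$, forcing $x=0$, which is the desired contradiction.

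There is no real obstacle here: the argument is an immediate instance of Lemma \ref{tf}, the only subtle point being the reminder that a finitely generated free module over $(R,\fm)$ is faithful precisely when it is nonzero, which is the case for the cyclic module $R/xR$ constructed in the proof.
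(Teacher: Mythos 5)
Your proof is correct and follows the same route as the paper: the paper's proof of Corollary \ref{12} is literally the single line ``This is in Lemma \ref{tf},'' and you have simply supplied the short bridge from ``free'' to ``faithful'' (a nonzero finitely generated free module over a local ring is faithful, and $R/xR$ is nonzero since $x\in\fm$) together with an unpacking of Lemma \ref{tf}'s argument. One small imprecision: from the displayed resolution, $\Tor^R_1(L,R/xR)$ is the quotient of $\ker(x\colon L\to L)$ by the image of $L^n\to L$, not the kernel itself, but this does not affect the conclusion since that kernel already vanishes for $L$-regular $x$.
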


\begin{proof}   
	This is in Lemma \ref{tf}.
\end{proof}

\begin{remark}\label{rng}
Adopt the notation of Corollary \ref{12}.  The finitely generated assumption is not so important. In fact, we need to fix the notion of depth. This is  defined to the supremum
of the lengths of all weak regular sequences on $L$. This is called the classical depth. Now, Corollary \ref{12} can be extend, by the same proof, as follows: Let $(R,\frak{m})$ be a local ring  and $L$ a 
Lichtenbaum module for $R$. Then
the classical depth of $L$ is zero.
\end{remark}

 In one of the applications we deal with regular rings and we  need to use an extension of Remark \ref{rng}. Suppose $R$ is regular. In order to extend the previous item,
  we recall the concept of  Koszul depth: Suppose  $\fm$ is
  generated by a set
$\underline{x}:=x_{1},\cdots, x_{d}$, we denote the Koszul
complex of $R$ with respect to $\underline{x}$ by
$\textbf{K} (\underline{x})$. Koszul depth of $\fm$ on $M$
is defined by
$$\kdepth_R(M):=\inf\{i \in\mathbb{N}\cup\{0\} | H^{i}(\Hom_R(
\textbf{K} (\underline{x}), M)) \neq0\}.$$ Note that by
\cite[Corollary 1.6.22]{BH} and \cite[Proposition 1.6.10 (d)]{BH},
this does not depend on the choice of generating sets of $\fm$. Now,
Corollary \ref{12} extends in the following sense:

\begin{corollary}\label{exte}
Let $(R,\frak{m})$ be a Cohen-Macaulay local ring  and $L$ be a
Lichtenbaum module for $R$. Then
$\kdepth_R(L)=0$.
\end{corollary}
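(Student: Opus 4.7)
The plan is to establish the equivalent statement $(0:_L \fm) \neq 0$, since for any generating set $\underline{x}$ of $\fm$ the zeroth Koszul cohomology satisfies $H^0(\Hom_R(\textbf{K}(\underline{x}), L)) = (0:_L \fm)$, and $\kdepth_R(L)=0$ amounts exactly to this submodule being nonzero.

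Set $d := \dim R = \depth_R(R)$. First I would dispose of the case $d = 0$: here $R$ is Artinian and $\fm$ is nilpotent, so for any nonzero module $N$ the largest integer $k$ with $\fm^k N \neq 0$ satisfies $\fm \cdot (\fm^k N)=0$, whence $\fm^k N \subseteq (0:_N \fm)$ is nonzero. Applied to $L$ (which is nonzero by definition of Lichtenbaum) this already yields $(0:_L \fm) \neq 0$; the same Artinian socle lemma will also be invoked at the end.

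For $d \geq 1$, I would use the Cohen--Macaulay hypothesis to fix an $R$-regular system of parameters $\underline{y} = y_1,\ldots,y_d$ and set $I := (\underline{y})$. Then $\textbf{K}(\underline{y})$ is a minimal free resolution of $R/I$, so $\pd_R(R/I) = d$ and consequently the $(d{-}1)$-st syzygy $\Syz_{d-1}(R/I)$ is finitely generated but not free (otherwise the projective dimension would drop below $d$). Contrapositively, the Lichtenbaum hypothesis on $L$ forces $\Tor_1^R(L, \Syz_{d-1}(R/I)) \neq 0$. A standard syzygy shift identifies this with $\Tor_d^R(L, R/I)$, and computing the latter via the Koszul resolution identifies it with the top Koszul homology $H_d(\textbf{K}(\underline{y}) \otimes_R L) = (0:_L I)$. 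Therefore $(0:_L I) \neq 0$.

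To finish, observe that $(0:_L I)$ is a nonzero module over the Artinian local ring $R/I$ (with maximal ideal $\fm/I$). Applying the Artinian socle lemma from the second paragraph to $(0:_L I)$ produces $0 \neq z \in (0:_L I) \subseteq L$ with $\fm z \subseteq I z = 0$, so $z \in (0:_L \fm)$ and hence $\kdepth_R(L)=0$. I expect the step needing the most care is the non-freeness of $\Syz_{d-1}(R/I)$; the cleanest justification is that $\textbf{K}(\underline{y})$ is a \emph{minimal} free resolution of length exactly $d$ over a local ring, so no syzygy in positions $0$ through $d{-}1$ can itself be free.
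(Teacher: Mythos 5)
Your proof is correct and follows essentially the same strategy as the paper's: pass to a parameter ideal $I$ generated by an $R$-regular sequence (using the Cohen--Macaulay hypothesis), compute $\Tor_d^R(R/I,L)$ via the Koszul resolution, use the syzygy shift to bring in the Lichtenbaum property of $L$ applied to $\Syz_{d-1}(R/I)$, and note that Auslander--Buchsbaum gives $\pd_R(R/I)=d$ so that $\Syz_{d-1}(R/I)$ is not free.

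The differences from the paper's argument are largely presentational but worth noting. The paper argues by contradiction and invokes two cited facts: that Koszul grade is invariant under passing to an ideal with the same radical, and the self-duality $H_i(\textbf{K}(\underline{x})\otimes_R L)\cong H^{d-i}(\Hom_R(\textbf{K}(\underline{x}),L))$. You instead argue directly, compute the top Koszul homology explicitly as $H_d(\textbf{K}(\underline{y})\otimes_R L)=(0:_L I)$, and replace the ``unique up to radical'' citation with an explicit Artinian socle lemma applied to the nonzero $R/I$-module $(0:_L I)$, which yields an element of $(0:_L\fm)$ and hence $\kdepth_R(L)=0$. This also makes the reduction to $d>0$ cleaner: the paper dismisses $d=0$ with the assertion $\kdepth_R(L)\le d$, which for not-necessarily finitely generated $L$ is really the same socle argument in disguise, and you spell that out. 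Both routes rest on the same ideas; yours is a bit more self-contained, the paper's is a bit more compressed by appealing to standard Koszul facts from \cite{BH}.
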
 

\begin{proof}
	 Let $d:=\dim R$. We may assume that $d>0$, because 
	 $\kdepth_R(L)\leq d$.
	Suppose on the way of contradiction that $\kdepth_R(L)>0$. 
Since $R$ is Cohen-Macaulay,   $\fm$ is a radical an ideal
generated by a set
$\underline{x}:=x_{1},\cdots, x_{d}$
of length $d$ which 
  is a regular
sequence. 
In this case, the Koszul
complex with respect  to $\underline{x}$ is a free resolution of $R/ \underline{x}R$. We know that Koszul grade is unique up to radical.
So, Koszul grade of $L$ on $\underline{x}R$ is also zero.
Also, recall that the symmetry of Koszul cohomology
and Koszul homology says that
$$H_{i}(\textbf{K}(\underline{x})\otimes_RL)\cong
H^{d-i}(\Hom_R(\textbf{K}(\underline{x}),L)),$$ see
\cite[Proposition 1.6.10 (d)]{BH}.
Thus,
	$$\Tor^R_{1}(\Syz_{d-1}(R/\underline{x}R),L)\cong\Tor^R_{d}(R/\underline{x}R,L)=H_d\left(\textbf{K}(\underline{x},R)\otimes_RL\right)=H^{0}(\Hom_R(\textbf{K}(\underline{x}),L))=0.$$Since $L$
	is Lichtenbaum, we deduce that $\Syz_{d-1}(R/\underline{x}R)$ is free. In other words, $\pd_R(R/\underline{x}R)\leq d-1$.
By Auslander-Buchsbaum formula,  $\pd_R(R/ \underline{x}R)=d$. This contradiction completes the proof.
\end{proof}

\begin{notation}
	The notation $\Tr(-)$ stands for the Auslander's transpose.
\end{notation}

\begin{lemma}\label{123}  Let $(R,\frak{m})$ be a local ring  which isn't a field and $L$ a finitely generated
Lichtenbaum module for $R$. The following assertions are valid:
\begin{itemize}
\item[(i)]  $\pd_RL<\infty$ if and only if $R$ is regular.
\item[(ii)] $L$ is not e-rigid, i.e. $\Ext_R^1(L,L)\neq 0$.

\end{itemize}
\end{lemma}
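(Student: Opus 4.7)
For (i), the forward direction is immediate from the fact that over a regular local ring every finitely generated module has finite projective dimension. For the converse, I would assume $\pd_R(L) = n < \infty$ and show $R$ has finite global dimension. The case $n = 0$ is ruled out: then $L$ is free, so $\Tor_1^R(L, F) = 0$ for every $F$, and the Lichtenbaum property would force every finitely generated $F$ to be free, making $R$ a field and contradicting the standing hypothesis. For $n \geq 1$ and any finitely generated $M$, dimension-shifting yields $\Tor_1^R(L, \Syz_n(M)) \cong \Tor_{n+1}^R(L, M) = 0$, so by Lichtenbaum $\Syz_n(M)$ is free and $\pd_R(M) \leq n$. Hence the global dimension of $R$ is at most $n < \infty$, and so $R$ is regular.

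For (ii), I would argue by contradiction. Suppose $\Ext_R^1(L, L) = 0$. Applying $\Hom_R(-, L)$ to the short exact sequence $0 \to \Syz_1(L) \to F \to L \to 0$ obtained from a minimal free cover and using $\Ext_R^1(F, L) = 0$, one gets a surjection $\Hom_R(F, L) \twoheadrightarrow \Hom_R(\Syz_1(L), L)$; equivalently, every $\alpha : \Syz_1(L) \to L$ extends to some $\tilde\alpha : F \to L$. By minimality, $\Syz_1(L) \subseteq \fm F$, so $\tilde\alpha(\Syz_1(L)) \subseteq \fm L$, and hence every map $\Syz_1(L) \to L$ has image contained in $\fm L$. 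By Corollary \ref{12}, $\depth_R(L) = 0$, so there is a nonzero socle element $s \in L$; the plan is to compose the canonical surjection $\Syz_1(L) \twoheadrightarrow \Syz_1(L)/\fm\Syz_1(L) \cong k^{n_1}$ with an $R$-linear map to $L$ sending a basis vector to $s$, producing a map $\Syz_1(L) \to L$ whose image is not contained in $\fm L$ whenever $s$ can be chosen outside $\fm L$ — contradicting the previous paragraph.

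The main obstacle is the case where the socle of $L$ is entirely contained in $\fm L$ (for instance when $L = R/\fm^n$ with $n \geq 2$), in which the direct socle construction only produces maps with image inside $\fm L$. To handle this, I would iterate through the $\fm$-adic filtration $L \supseteq \fm L \supseteq \fm^2 L \supseteq \cdots$: using the short exact sequences $0 \to \fm^{i+1} L \to \fm^i L \to \fm^i L/\fm^{i+1} L \to 0$ together with the hypothesis $\Ext_R^1(L, L) = 0$, one aims to show inductively that every map $\Syz_1(L) \to L$ factors through $\fm^n L$ for all $n$; the Krull intersection theorem then forces $\Hom_R(\Syz_1(L), L) = 0$, contradicting the nonzero socle-valued map guaranteed by $\depth_R(L) = 0$. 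Verifying that the Ext obstructions controlling each iteration really do vanish is the most delicate step of the argument.
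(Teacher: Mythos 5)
Part (i) of your proposal is correct and close to the paper's argument; the paper simply takes $M=R/\fm$ directly, concluding that $\Syz_d(R/\fm)$ is free and hence $\pd_R(R/\fm)<\infty$, while you pass through an arbitrary $M$ to bound the global dimension. This is a harmless elaboration of the same idea.

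Part (ii) has a genuine gap, which you have partly identified yourself. Your deduction that $\Ext^1_R(L,L)=0$ forces every map $\Syz_1(L)\to L$ to have image in $\fm L$ is fine. But the contradiction you propose only materializes when $L$ has a socle element outside $\fm L$, and this fails already for $L=R/\fm^2$ over any non-field local ring, where the socle equals $\fm L=\fm/\fm^2$. The fallback you sketch --- inductively showing that every map factors through $\fm^n L$ for all $n$ --- does not run: to improve a factorization through $\fm^i L$ to one through $\fm^{i+1}L$ you would need to lift a map $\Syz_1(L)\to\fm^i L$ along $\Syz_1(L)\hookrightarrow F$ to a map $F\to\fm^i L$, which requires $\Ext^1_R(L,\fm^i L)=0$; and $\Ext^1_R(L,L)=0$ does not give this, since from $0\to\fm L\to L\to L/\fm L\to 0$ one only learns that $\Ext^1_R(L,\fm L)$ is a quotient of $\Hom_R(L,L/\fm L)$, which is generally nonzero. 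Re-applying the hypothesis $\Ext^1_R(L,L)=0$ only produces a lift to $L$, whose restriction to $\Syz_1(L)\subseteq\fm F$ again lands in $\fm L$, with no gain. Notice also that your argument invokes the Lichtenbaum hypothesis only through Corollary \ref{12} to get $\depth_R L=0$, whereas the paper's proof uses the Lichtenbaum property more substantively. It invokes the Auslander--Bridger four-term exact sequence
$$\Tor_2^R(\Tr(\Omega L),L)\to\Ext^1_R(L,R)\otimes_R L\to\Ext^1_R(L,L)\to\Tor_1^R(\Tr(\Omega L),L)\to 0,$$
so the hypothesis $\Ext^1_R(L,L)=0$ gives $\Tor_1^R(\Tr(\Omega L),L)=0$; the Lichtenbaum property then forces $\Tr(\Omega L)$ to be free, whence $\Omega L$ is free and $\pd_R L\le 1$, and moreover $\Ext^1_R(L,R)\otimes_R L=0$. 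Nakayama then yields $\Ext^1_R(L,R)=0$, hence $\pd_R L=0$, contradicting (as in your own part (i)) that a free Lichtenbaum module forces $R$ to be a field.
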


\begin{proof} (i)  Assume that $d:=\pd_RL<\infty$. Then $$0=\Tor^{R}_{d+1}(R/\frak{m},L)=\Tor^{R}_{1}(L,\Syz_{d}(R/\frak{m})).$$
As $L$ is  Lichtenbaum, this implies that  $\Syz_{d}(R/\frak{m})$ is free, and so $\pd_R
R/\frak{m}<\infty$. Hence, $R$ is regular.
The reverse implication is obvious.

(ii) We suppose that $L$ is e-rigid, and look for a contradiction. Recall that $\Omega (-):=\Syz_{1}(-)$. From the exact
sequence (see \cite[2.8]{AB})$$\Tor^{R}_{2}(\Tr(\Omega L),L)\longrightarrow \Ext^{1}_{R}(L,R)\otimes_RL\longrightarrow \Ext^{1}_{R}(L,L)
\longrightarrow \Tor^{R}_{1}(\Tr(\Omega L),L)\longrightarrow 0,$$ we conclude that $\Tor^{R}_{1}(\Tr(\Omega L),L)=0$.
As $L$ is a Lichtenbaum module for $R$, we deduce that the $R$-module
$\Tr(\Omega L)$ is free. Recall that $\Tr(\Tr (-))\cong (-)$, and so $\Omega L\cong \Tr(\Tr(\Omega L)$ is free.
Thus, $\pd_RL\leq 1$. Next, we have $$\Ext_{R}^{1}(L,R)\otimes_RL\cong \Ext_{R}^{1}(L,L)=0.$$ This yields that
$\Ext^{1}_{R}(L,R)=0$, and so $$\pd_RL=\sup \{n\in\mathbb{N}_0\mid \Ext^{n}_{R}(L,R)\neq 0\}=0.$$  This is a
contradiction because a free module can't be   Lichtenbaum   unless the ring is field, and this is excluded from the assumption.
\end{proof}

\begin{notation}Suppose  $p:=\Char (R)>0$, and let $\varphi:R\to R$ denotes the  Frobenius endomorphism given by $\varphi(a)=a^{p}$ for $a\in R$. Each iteration $\varphi^n$ of $\varphi$ defines a new $R$-module structure on the set $R$, and this $R$-module is denoted by $\up{\varphi^n}R$, where $a\cdot b = a^{p^{n}}b$ for $a, b \in R$.
\end{notation}

We present some (non-) examples of Lichtenbaum modules.

\begin{example}\label{none}Let $(R,\fm,k)$ be local. The following holds:
	\begin{itemize}
\item[(i)] Let $x\in \fm$ be an $R$-regular element. Then $\fm/x\fm$ is a Lichtenbaum module for $R$.
\item[(ii)]  $ \fm $ is Lichtenbaum if and only if $ \depth_R R=0 $.
\item[(iii)] Suppose  $R$ is Cohen-Macaulay and  $p:=\Char (R)>0$.  
Then $\up{\varphi^n}R$ is Litchtenbaum for all $n\gg 0$ iff $\dim(R)=0$.
\item[(iv)] Localization of a Lichtenbaum module is not necessarily Lichtenbaum, even over $2$-dimensional (regular) rings.
\end{itemize}
\end{example}

\begin{proof}
(i) Suppose $\Tor^{R}_{1}(\fm/x\fm,F)=0$ for some finitely generated $R$-module $F$. We are going to show $F$ is free.
Indeed, since $x$ is regular over $R$, it is also regular over $\frak{m}$. So, we have the short exact sequence
$$0\longrightarrow \frak{m}\overset{x}\longrightarrow \frak{m}\longrightarrow \frak{m}/x\frak{m}\longrightarrow 0.$$
It yields the exact sequence $$\Tor^{R}_{1}(\frak{m},F) \overset{x}\longrightarrow \Tor^{R}_{1}(\frak{m},F)
\longrightarrow \Tor^{R}_{1}(\frak{m}/x\frak{m},F)=0.$$ By Nakayama's lemma, we conclude that $\Tor^{R}_{1}(\frak{m},F)=0$.
By shifting,  
we have $\Tor^{R}_{2}(R/\frak{m},F)\cong \Tor^{R}_{1}(\frak{m},F)=0$, and so $\pd_RF\leq1$.
It is easy to see that the map $\psi: R/\frak{m}\longrightarrow \frak{m}/x\frak{m}$ defined by $\psi(r+\frak{m})=xr+x\frak{m}$
is an injective $R$-homomorphism. This fits in the following   short exact sequence $$0 \longrightarrow R/\frak{m} \overset{\psi}\longrightarrow \frak{m}/x\frak{m} \longrightarrow \Coker \psi \longrightarrow0,$$ which yields the following exact sequence:
$$\Tor^{R}_{2}(\Coker \psi, F)\longrightarrow \Tor^{R}_{1}(R/\frak{m},F)\longrightarrow \Tor^{R}_{1}(\frak{m}/x\frak{m},F).$$
Hence $\Tor^{R}_{1}(R/\frak{m},F)=0$, and so $F$ is free.

(ii) It is enough to apply Corollary \ref{12}  along with the argument presented in  part (i).

(iii) Let $d:=\dim R$. First assume that $d=0$, and suppose $\Tor^{R}_{1}(\up{\varphi^n}R,F)=0$ for some finitely generated $R$-module $F$. Denote the  n-th Frobenius power of an ideal  $(-)$ by $(-) ^ { [p^n]}  $.
Let $n$ be  such that 
$\fm ^ { [p^n]}   =0$. This is possible, because $\fm$ is nilpotent. 
In other words, $\fm. \up{\varphi^n}R =0$.  Then $\oplus\Tor^{R}_{1}(k,F)=\Tor^{R}_{1}(\up{\varphi^n}R,F)=0$. Since
$k$ is a test module and $F$ is finitely generated, 
we deduce that $F$
is free. So,  $\up{\varphi^n}R$ is Litchtenbaum.
Now, assume $d>0$ and suppose on the way of contraction that
$\up{\varphi^n}R$ is Litchtenbaum. According to Remark \ref{rng}  we know that the
classical depth of $\up{\varphi^n}R$ is zero. In view of  \cite[Theorem 16.1]{Mat} we observe
that $\depth(R)=0$. This is in contradiction with the Cohen-Macaulay assumption.

(iv) Let $R$ be any  local ring of dimension at least two, e.g., $R:=k[[x,y]]$ and look at the following
chain $\fp_1\subsetneqq\fp_2\subsetneqq\fm$ of prime ideals. Let $L:=R/\fm \oplus R/\fp_1$. As $k$ is a direct summand of $L$, we observe that $L$ is Litchtenbaum. Since $L_{\fp_2}$ is of positive depth
over $R_{\fp_2}$, and due to Corollary \ref{12}, we deduce that  $L_{\fp_2}$ is not Litchtenbaum
over $R_{\fp_2}$.
\end{proof}
In \S5 we will   apply  Example \ref{none}(iii) for not necessarily Cohen-Macaulay rings. Let us drop the Cohen-Macaulay assumption:
\begin{remark}\label{fr}
	 Suppose $R$ is local and $p:=\Char (R)>0$.  
	 Then $\up{\varphi^n}R$ is Litchtenbaum for all $n\gg 0$ iff $\depth(R)=0$.\end{remark}
\begin{proof}	Indeed, this is similar to Example \ref{none}(iii). It is enough to
	 combine   Remark \ref{rng} along with \cite[Proposition 2.2.11]{mi}.  
\end{proof}
\begin{definition}(Auslander)
	A module $ T $ is said to be tor-rigid if there is a non-negative integer $n$ such that for every   finitely generated   
$R$-module $M$, vanishing of $\Tor_{n}^R(T,M)$ implies $\Tor_{n+i}^R(T,M)$ vanishes for all $i\geq 0$.
\end{definition}

\begin{remark}
	i) Over regular rings any finitely generated  module is tor-rigid (see \cite[2.2]{Au}). 
	
	ii)  The finiteness assumption  is important.
	Indeed, let $(R,\fm)$ be a regular local ring of dimension $d>1$. By local duality,
	$\Tor^R_1(E_R(k),k)=0$ and $\Tor^R_d(E_R(k),k)=k$. 
\end{remark}

The if part of the next result is in \cite[Corollary 6]{L}. We recover it by a new argument:
\begin{corollary}\label{cr}
	Let $(R,\fm)$ be a regular and local ring, and let $L$ be finitely generated. Then $L$ is Lichtenbaum if and only if $\depth_R(L)=0$.
\end{corollary}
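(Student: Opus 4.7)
The forward direction ``$L$ Lichtenbaum $\Rightarrow \depth_R(L)=0$'' is already covered by Corollary \ref{12}, so I only need to sketch the reverse. Assume $R$ is regular, $L$ is finitely generated with $\depth_R(L)=0$, and let $F$ be a finitely generated module with $\Tor^R_1(L,F)=0$; the goal is to show $F$ is free.

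The key ingredients are Auslander's rigidity over regular rings and Fact \ref{lemma1}. First, because $R$ is regular, the module $F$ has finite projective dimension; let $p:=\pd_R(F)$. By Auslander's rigidity theorem (the remark preceding Corollary \ref{cr}, citing \cite[2.2]{Au}), every finitely generated module over a regular local ring is tor-rigid, so from $\Tor^R_1(L,F)=0$ we deduce $\Tor^R_i(L,F)=0$ for every $i\geq 1$.

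Next, suppose for contradiction that $p\geq 1$. Apply Fact \ref{lemma1} with the roles $M=F$ (finite projective dimension $p$) and $N=L$ (depth zero): this yields $\Tor^R_p(F,L)\neq 0$. By the standard symmetry of $\Tor$, $\Tor^R_p(F,L)\cong \Tor^R_p(L,F)$, which contradicts the vanishing obtained from rigidity. Hence $p=0$, and a finitely generated module of projective dimension zero over a local ring is free.

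The proof is essentially a two-line combination of already quoted facts, so there is no real obstacle; the only thing to be careful about is the quantifier on $F$. Since the definition of Lichtenbaum (Definition \ref{1}) is used here with $F$ finitely generated (per the remark following it), the rigidity citation and Auslander's depth formula both apply directly, and no extension to arbitrary $F$ is required for this corollary.
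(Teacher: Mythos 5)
Your proof is correct. Both directions match the paper in outline: you delegate the ``only if'' to Corollary~\ref{12}, and for the ``if'' direction you invoke tor-rigidity over regular rings to pass from $\Tor_1^R(L,F)=0$ to $\Tor_{\geq 1}^R(L,F)=0$. Where you diverge from the paper is in how you then conclude $F$ is free. You suppose $p:=\pd_R(F)\geq 1$, apply Fact~\ref{lemma1} with the roles $M=F$, $N=L$ to get $\Tor^R_p(F,L)\neq 0$, and contradict the vanishing via the symmetry of Tor. The paper instead applies the Auslander depth formula (Fact~\ref{aus}) in the case $q=0$: since $\pd_R(L)=\depth_R(R)$ by Auslander--Buchsbaum (using $\depth_R L=0$), the formula forces $\depth_R(F)=\depth_R(R)$, so $F$ is free. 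Both are short; your route buys a slightly more elementary argument, needing only the non-vanishing of the top Tor rather than the full depth equality, while the paper's route has the advantage of being the same depth-formula machinery it reuses elsewhere (e.g.\ Corollary~\ref{26.5}). Either way, the essential input --- rigidity plus one Auslander theorem --- is the same.
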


\begin{proof}
		Suppose $\depth_R(L)=0$ and $\Tor^R_1(L,F)=0$  for some finitely generated $R$-module $F$. By   tor-rigidity, we observe that $\Tor_+^R(L,F)=0$.
			By depth formula (see Fact \ref{aus}), $$\depth_R(R)\geq\depth_R(F)=\depth_R(L\otimes F)+\pd_R(L)=\depth_R(L\otimes F)+\depth_R R\geq \depth_R R.$$
			This yields that $\depth_R(F)=\depth_R(R)$. By Auslander-Buchsbaum formula, $F$ is free. Thus, $L$ is Lichtenbaum. The reverse part is in  Corollary  \ref{12}.
	\end{proof}
	\begin{corollary}\label{cr1}
		Let $(R,\fm)$ be a regular and local ring, and let $L$ be  Lichtenbaum.
	Then $L$ is a directed limit of finitely generated  Lichtenbaum modules.
	\end{corollary}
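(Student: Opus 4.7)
The plan is to exhibit $L$ as a filtered colimit of finitely generated submodules of depth zero, and then to invoke Corollary \ref{cr} to conclude each is Lichtenbaum. The bridge between the two parts is a fixed nonzero socle element of $L$, whose existence is guaranteed by Corollary \ref{exte}.

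First, I would apply Corollary \ref{exte}: since $R$ is regular it is Cohen-Macaulay, so the hypotheses of that corollary hold and we get $\kdepth_R(L)=0$. Using that $\fm$ is generated by a regular system of parameters $\underline{x}=x_1,\dots,x_d$, the Koszul cohomology computation $H^{0}(\Hom_R(\mathbf{K}(\underline{x}),L))=(0:_L \fm)$ translates this vanishing statement into the existence of a nonzero element $z\in L$ with $\fm z=0$. Since $\Ann_R(z)$ is a proper ideal (because $z\neq 0$) containing the maximal ideal $\fm$, we must have $\Ann_R(z)=\fm$, so $k$ embeds in $L$ via $1\mapsto z$.

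Next, I would let $\mathcal{F}$ denote the set of finitely generated submodules $L_\alpha \subseteq L$ that contain $z$, partially ordered by inclusion. This system is directed (for $L_\alpha,L_\beta\in\mathcal{F}$, the submodule $L_\alpha+L_\beta$ is again in $\mathcal{F}$) and is cofinal in the directed system of all finitely generated submodules of $L$: given any finitely generated $N\subseteq L$, the submodule $N+Rz$ is finitely generated, contains $z$, and contains $N$. Therefore $L=\varinjlim_{\alpha\in\mathcal{F}} L_\alpha$. For each $L_\alpha\in\mathcal{F}$ we have $z\in L_\alpha$ with $\Ann_R(z)=\fm$, so $\fm\in\Ass_R(L_\alpha)$ and hence $\depth_R(L_\alpha)=0$. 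Since $L_\alpha$ is finitely generated over the regular ring $R$, Corollary \ref{cr} applies and yields that $L_\alpha$ is Lichtenbaum.

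The only delicate point is the passage from $\kdepth_R(L)=0$ to the presence of an actual socle element, which requires being comfortable with the Koszul interpretation of depth for a not-necessarily-finitely-generated module; this is where it is important that $R$ is regular (so $\fm$ is generated by a regular sequence and Corollary \ref{exte} is available). Once the socle element $z$ is fixed, constructing the directed system and verifying $\depth_R(L_\alpha)=0$ via $\Ass_R(L_\alpha)$ is routine.
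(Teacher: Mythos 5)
Your proposal is correct and follows essentially the same path as the paper: use Corollary \ref{exte} to get $\kdepth_R(L)=0$, extract a socle element embedding $R/\fm$ into $L$, form the directed system of finitely generated submodules containing it, note each has depth zero, and invoke Corollary \ref{cr}. Your explicit identification of $H^{0}(\Hom_R(\mathbf{K}(\underline{x}),L))$ with $(0:_L\fm)$ and the cofinality check are just slightly more spelled-out versions of the paper's appeal to $\Egrade_R(\fm,L)=0$ and its filtered-union statement.
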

	
	\begin{proof}
		Recall from Corollary \ref{exte} that $\kdepth_R(L)=0$. It turns out that
		$$\Egrade_{R}(\fm,L):=\inf\{i\in \mathbb{N}\cup\{0\}|\Ext^{i}_{R}(R/\fm,
		L)\neq0\}=0.$$Let $f:R/\fm\to L$ be any nonzero morphism, and recall that $R/\fm$ is simple as an $R$-module. From these, $R/\fm$ can be embedded into $L$.
		There is a filtered system $\{L_i\}_{i\in I}$ of finitely generated submodules of $L$
		such that $L=\bigcup_{i\in I} L_i$ and that $R/ \fm\subseteq L_i$.
		In particular, $\depth_R(L_i)=0$. In view of Corollary \ref{cr} we observe that
		$L_i$ is  Lichtenbaum, and this completes the proof.
	\end{proof}
	\begin{notation}
	  Let $(-)^{\vee}:=\Hom_{R}(-,E_R(R/\frak{m}))$ be the Matlis duality  functor.
	\end{notation}
Also, there is a connection to   field theory:
 
\begin{proposition}\label{13}  Let $(R,\frak{m})$ be a local ring. The following are equivalent:
	
	\begin{itemize}
		\item[(i)]   $R$ is a field.
		\item[(ii)] $R$ is Cohen-Macaulay with canonical module	$\omega_R$ which  is  Lichtenbaum. 
		\item[(iii)]$K_R$  is a Lichtenbaum module for $R$.		\end{itemize}
\end{proposition}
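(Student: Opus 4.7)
The plan is to argue cyclically, with the two straightforward implications disposed of quickly and the substantive content concentrated in $(iii)\Rightarrow(i)$. For $(i)\Rightarrow(ii)$, note that over a field $R=k$ one has $\omega_R=R$, and since every $R$-module is free the Lichtenbaum hypothesis holds vacuously. The implication $(ii)\Rightarrow(iii)$ is immediate, because under the Cohen--Macaulay hypothesis the two canonical-module notations agree, $K_R=\omega_R$.

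The heart of the argument is $(iii)\Rightarrow(i)$, which I would prove by contradiction. Assume $K_R$ is a finitely generated Lichtenbaum module but $R$ is not a field. I would put the following two ingredients in tension. First, the classical rigidity of the canonical module: $\Ext^i_R(K_R,K_R)=0$ for all $i>0$ (together with $\End_R(K_R)\cong R$). Second, Lemma \ref{123}(ii), which is precisely designed for this situation and guarantees that a finitely generated Lichtenbaum module $L$ over a local ring which is not a field fails to be $e$-rigid, i.e.\ $\Ext^1_R(L,L)\neq 0$. Taking $L:=K_R$ in the second input produces $\Ext^1_R(K_R,K_R)\neq 0$, in direct conflict with the first. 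Hence the assumption that $R$ is not a field is untenable, and $R$ must be a field.

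The main (modest) obstacle is not the core argument, which is a clean one-line application of Lemma \ref{123}(ii), but the bookkeeping needed to ensure that $K_R$ genuinely enjoys the canonical-module rigidity invoked above. This is handled by interpreting $K_R$ throughout as a canonical module in the standard sense, whose very existence forces $R$ to be Cohen--Macaulay; thus the rigidity $\Ext^i_R(K_R,K_R)=0$ is automatically at our disposal, and the hypothesis of $(iii)$ slots neatly into the setting of $(ii)$. With that identification in place the contradiction above closes the cycle $(i)\Rightarrow(ii)\Rightarrow(iii)\Rightarrow(i)$ and completes the proof.
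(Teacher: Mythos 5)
Your implications $(i)\Rightarrow(ii)$ and $(ii)\Rightarrow(iii)$ match the paper and are fine. The substantive direction $(iii)\Rightarrow(i)$ is where you diverge, and where the gap sits.

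Your core idea — play Lemma~\ref{123}(ii) off against the rigidity $\Ext^{>0}_R(K_R,K_R)=0$ — is a genuinely different and in fact cleaner route than the paper's, which instead runs a torsionless/Auslander-transpose argument showing every finitely generated module is free. The problem is the sentence ``interpreting $K_R$ throughout as a canonical module in the standard sense, whose very existence forces $R$ to be Cohen--Macaulay.'' That is not correct, and it is exactly the point the paper has to work for. In item $(iii)$ the paper uses $K_R:=H^d_{\fm}(R)^{\vee}$ with $d=\dim R$, which exists for non--Cohen--Macaulay rings; if $(iii)$ were read as presupposing Cohen--Macaulayness it would collapse into $(ii)$ and there would be nothing new to prove. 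For a non--Cohen--Macaulay ring the rigidity $\Ext^{i}_R(K_R,K_R)=0$ for $i>0$ (which over a CM ring follows from local duality and the fact that $\omega_R$ is maximal Cohen--Macaulay of finite injective dimension) is not available, so your one-line contradiction is not yet justified.

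The missing step is precisely the first paragraph of the paper's proof: since $K_R$ is Lichtenbaum, Corollary~\ref{12} gives $\depth_R(K_R)=0$; combined with the fact that $K_R$ always satisfies Serre's condition $(S_2)$, this forces $\dim K_R=0$, hence $\dim R=0$, hence $R$ is Artinian and in particular Cohen--Macaulay with $K_R=\omega_R=E_R(R/\fm)$. Once you have that, your argument closes immediately: $E_R(R/\fm)$ is injective, so $\Ext^1_R(K_R,K_R)=0$, contradicting Lemma~\ref{123}(ii). So the approach is salvageable and, once patched, arguably shorter than the paper's; but as written it assumes the Cohen--Macaulay reduction rather than proving it, and that reduction is not a free consequence of the hypothesis.
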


\begin{proof}(i) $\Rightarrow$ (ii) and (ii) $\Rightarrow$ (iii): These are easy.
	
	(iii) $\Rightarrow$ (i): Recall that $K_R=H^d_{\fm}(R)^{\vee}$ where  $d:=\dim R$.  As $K_R$ is  Lichtenbaum, 
and in view of  Corollary  \ref{12}, $\depth_R(K_R)=0$.	Since it satisfies Serre's condition $(S_2)$, it follows that $d=0$. In particular, $K_R=\omega_R$.
 Let $M$ be   finitely generated and torsionless as an $R$-module.
As $M$ is torsionless, from the exact sequence (see \cite[2.6]{AB}) $$0\longrightarrow \Ext_{R}^{1}(\Tr M,R)\longrightarrow M\overset{\varphi}
{\longrightarrow} M^{**}\longrightarrow \Ext_{R}^{2}(\Tr M,R)\longrightarrow 0,$$ we deduce that $\ker \varphi=0$,
and so $\Ext_{R}^{1}(\Tr M,R)=0$. 
Now,
$$\begin{array}{ll}
0&=\Ext_{R}^{1}(\Tr M,R)^{\vee}\\
&\cong \Tor^{R}_{1}(\Tr M,E(R/\frak{m}))\\
&=\Tor^{R}_{1}(\Tr M, \omega_R)\\
&\cong \Tor^{R}_{1}(\omega_R,\Tr M).
\end{array}$$

As $\omega_R$ is  Lichtenbaum, it follows that $\Tr(M)$ is free. In order to see $M$ is free, we apply
another Auslander's transpose and obtain $M\cong \Tr(\Tr(M))$ which is free.
Next, let $M$ be an arbitrary  finitely generated $R$-module. Consider the short exact sequence
$$0\longrightarrow \Syz_{1}(M) \longrightarrow R^{n} \longrightarrow M\longrightarrow 0.$$

As $\Syz_{1}(M)$ is a submodule of a free $R$-module, it is torsionless. We repeat  the above argument, and deduce that
the $R$-module $\Syz_{1}(M)$ is free. In other words, $\pd_RM\leq 1$. Now,  Auslander-Buchsbaum formula asserts
that $\pd_RM=0$. In sum, we proved that any finite generated $R$-module is free, and so $R$ is a field.
\end{proof}

\begin{corollary}\label{14}  Let $(R,\frak{m})$ be an artinian local ring which is not a field and $L$ be a finitely
generated Lichtenbaum module for $R$. Then $L$ has infinite injective dimension.
\end{corollary}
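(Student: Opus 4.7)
My plan is a contradiction argument that reduces directly to Proposition \ref{13}(iii). Assume $\id_R(L) < \infty$; I will deduce that $R$ is a field, contradicting the hypothesis. Since $R$ is artinian---hence Cohen-Macaulay with $\depth R = 0$---and $L$ is a finitely generated module of finite injective dimension, Bass's formula yields $\id_R(L) = \depth R = 0$. So $L$ is injective.

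By Matlis structure theory over the artinian local ring $R$ (where the only prime is $\fm$), the unique indecomposable injective module is $E := E_R(R/\fm)$, and it has finite length equal to $\ell(R)$. Consequently, every injective $R$-module splits as a direct sum of copies of $E$, and since $L$ is finitely generated and nonzero (Lichtenbaum modules being nonzero by definition), one obtains $L \cong E^n$ for some integer $n \geq 1$. The Lichtenbaum property descends from $L$ to $E$: if $\Tor_1^R(E,F) = 0$ for some finitely generated $F$, then
$$\Tor_1^R(L,F) \cong \Tor_1^R(E,F)^n = 0,$$
and the Lichtenbaum property of $L$ forces $F$ to be free.

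To finish, recall that $d := \dim R = 0$, so local cohomology collapses to $H^0_{\fm}(R) = R$. Hence $K_R = H^0_{\fm}(R)^{\vee} = R^{\vee} = E$. Thus $K_R$ is Lichtenbaum, and Proposition \ref{13} forces $R$ to be a field---the desired contradiction.

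I do not anticipate any real obstacle: Bass's formula in the artinian case and the Matlis decomposition are standard, the transfer of the Lichtenbaum property from $L = E^n$ down to the summand $E$ is immediate because $\Tor$ commutes with finite direct sums, and the substantive work has already been carried out inside Proposition \ref{13}.
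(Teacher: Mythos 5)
Your proof is correct and follows essentially the same route as the paper: both reduce to showing $L$ is injective, decompose $L$ as a finite direct sum of copies of $E_R(k) \cong \omega_R$, observe the Lichtenbaum property passes to $\omega_R$, and invoke Proposition~\ref{13} for the contradiction. The only (minor) deviation is the step establishing injectivity: you apply Bass's formula $\id_R(L) = \depth R = 0$ directly, whereas the paper dualizes, applies Auslander--Buchsbaum to $L^{\vee}$, and uses $L \cong L^{\vee\vee}$ --- both standard and equally valid.
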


\begin{proof}  On the contrary, suppose $\id_RL<\infty$. 
 It follows that $\pd_R(L^{\vee})<\infty $. By  Auslander-Buchsbaum formula, we
see that $\pd_R(L^{\vee})=0$. Since the ring is local, $L^{\vee}$ is free. Now, $L\cong L^{\vee \vee}$ is injective. Due to the  Matlis
theory, we know $$L \cong \oplus E_{R}(R/\fm) \cong \oplus \omega_R,$$ and so $\omega_R$ is a Lichtenbaum module for
$R$. Now, in view of Proposition  \ref{13}, we arrived at the desired contradiction.
\end{proof}

\begin{proposition}\label{15}  Let $(R,\frak{m})$ be a local domain which is not a field and $L$ a finitely generated Lichtenbaum
module for $R$. Then $\Omega L\otimes_R\Omega L$ is torsion-free if and only if $R$ is a discrete valuation ring.
\end{proposition}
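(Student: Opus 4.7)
The plan is to handle both directions of the biconditional. For the easier forward direction, assume $R$ is a DVR. Since every finitely generated torsion-free module over a DVR is free and $\Omega L$ embeds in a finite free $R$-module, $\Omega L$ is free, hence $\Omega L\otimes_R\Omega L$ is free and in particular torsion-free.

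For the reverse direction, assume $\Omega L\otimes_R\Omega L$ is torsion-free. The key step is to prove that $\Tor_1^R(L,\Omega L)=0$. Tensoring the short exact sequence $0\to \Omega L\to R^n\to L\to 0$ with $\Omega L$, and using $\Tor_1^R(R^n,\Omega L)=0$, yields
\[
0\longrightarrow \Tor_1^R(L,\Omega L)\longrightarrow \Omega L\otimes_R\Omega L\longrightarrow (\Omega L)^n,
\]
so $\Tor_1^R(L,\Omega L)$ embeds in the torsion-free module $\Omega L\otimes_R\Omega L$ and is itself torsion-free. On the other hand, since $R$ is a domain with field of fractions $K$, localizing at $(0)$ gives $\Tor_1^R(L,\Omega L)_{(0)}\cong \Tor_1^{K}(L_{(0)},(\Omega L)_{(0)})=0$ because $K$ is a field, hence $\Tor_1^R(L,\Omega L)$ is torsion. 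A module that is simultaneously torsion and torsion-free must vanish, so $\Tor_1^R(L,\Omega L)=0$.

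By the Lichtenbaum hypothesis on $L$, this vanishing forces $\Omega L$ to be free, hence $\pd_R L\le 1$. Since $R$ is not a field, $L$ cannot itself be free (a nonzero free Lichtenbaum module would force $R$ to be a field, as in the last line of the proof of Lemma \ref{123}), so $\pd_R L=1$. Then Lemma \ref{123}(i) yields that $R$ is regular, Corollary \ref{12} gives $\depth_R L=0$, and the Auslander--Buchsbaum formula produces $\depth R=\pd_R L+\depth_R L=1$; a regular local ring of dimension one is a DVR. The main obstacle is producing the key vanishing $\Tor_1^R(L,\Omega L)=0$ from the torsion-free hypothesis via the torsion/torsion-free trichotomy over a domain; once that is in hand, the Lichtenbaum property combined with Lemma \ref{123} and Corollary \ref{12} finishes the argument.
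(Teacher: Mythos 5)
Your argument is correct and follows essentially the same route as the paper: the same short exact sequence $0\to\Omega L\to R^n\to L\to0$ is tensored with $\Omega L$, the module $\Tor_1^R(L,\Omega L)$ is shown to be simultaneously torsion-free (as a submodule of $\Omega L\otimes_R\Omega L$) and torsion (by localizing at the zero ideal of the domain $R$), hence zero, and then the Lichtenbaum hypothesis, Lemma \ref{123}(i), Corollary \ref{12}, and the Auslander--Buchsbaum formula are invoked exactly as in the paper. The only cosmetic difference is that you pin down $\pd_R L=1$ exactly before applying Auslander--Buchsbaum, whereas the paper only needs $\pd_R L\le1<\infty$ and then bounds $\dim R\le1$; both are fine.
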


\begin{proof} First, assume that $R$ is a discrete valuation ring.  Then $\text{gdim} R=1$, and so $\Omega N$ is a free
module for all $R$-modules $N$. In particular, it follows that  $\Omega L\otimes_R \Omega L$ is free, and
clearly, it is torsion-free.
Conversely, suppose $\Omega L\otimes_R\Omega L$ is torsion-free. From the  exact sequence
$$0\longrightarrow \Omega L \longrightarrow R^{n} \longrightarrow L \longrightarrow 0,$$ we deduce the exact sequence
$$0\longrightarrow \Tor_{1}^{R}(L,\Omega L)\longrightarrow \Omega L\otimes_R\Omega L\longrightarrow (\Omega L)^{n}
\longrightarrow L\otimes_R\Omega L \longrightarrow 0  \   \        \ (*)$$
Set $S:=R\setminus 0$. As $S^{-1}R$ is a field, it follows that $$S^{-1}(\Tor_{1}^{R}(L,\Omega L))\cong
\Tor_{1}^{S^{-1}R}(S^{-1} L,S^{-1}(\Omega L))=0.$$ Hence, the $R$-module $\Tor_{1}^R(L,\Omega L)$ is torsion.
As $\Omega L\otimes_R\Omega L$ is torsion-free, from the exact sequence $(*)$, we conclude that $\Tor_{1}^R(L,\Omega L)=0$.
Since $L$ is a Lichtenbaum module for $R$, it turns out that $\Omega L$ is free, and this implies that $\pd_RL\leq 1$. Thus, by Lemma \ref{123}(i),
it turns out that $R$ is regular. Now, the Auslander-Buchsbaum formula and  Corollary  \ref{12}  imply that $\dim R\leq 1$.
As $R$ is not a field, we get that $\dim R=1$, and so $R$ is a discrete valuation ring.
\end{proof}

We observed in Example \ref{none}(ii) that certain modules are  Lichtenbaum iff the ring is of depth zero.  In the same vein, we  detect the $\DVR$ property of rings. 

\begin{observation}\label{16}  Let $(R,\frak{m})$ be a Gorenstein local ring which is not a field. Then $E_R(R/\fm)$
is a Lichtenbaum module for $R$ if and only if $R$ is a discrete valuation ring.
\end{observation}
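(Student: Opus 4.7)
The plan is to translate the Tor-vanishing condition into an Ext-vanishing condition via Matlis duality and then to rule out every possibility for $R$ other than a DVR. Writing $E:=E_R(R/\fm)$, I would begin from the adjunction
\[\Hom_R(\Tor_1^R(E,F),E)\cong\Ext_R^1(F,\Hom_R(E,E))\cong\Ext_R^1(F,\widehat R),\]
observe that $\Tor_1^R(E,F)$ is Artinian for any finitely generated $F$ (being a subquotient of a finite sum of copies of the Artinian module $E$), so by Matlis duality it vanishes iff its Matlis dual vanishes, and invoke the canonical isomorphism $\Ext_R^1(F,\widehat R)\cong\Ext_R^1(F,R)\otimes_R\widehat R$ together with the faithful flatness of $\widehat R$ to conclude that
\[\Tor_1^R(E,F)=0\Longleftrightarrow\Ext_R^1(F,R)=0\qquad(F\text{ finitely generated}).\]

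For the direction ``$R$ is a DVR $\Rightarrow E$ is Lichtenbaum'', I would apply the structure theorem to write a finitely generated $F$ with $\Tor_1^R(E,F)=0$ as $R^n\oplus T$, where $T$ is a direct sum of cyclic torsion summands $R/t^{a_i}R$. The standard resolution $0\to R\xrightarrow{t^a}R\to R/t^aR\to 0$ gives $\Ext_R^1(R/t^aR,R)\cong R/t^aR$, so $\Ext_R^1(F,R)=0$ forces $T=0$, and hence $F$ is free.

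For the converse, I would assume $E$ is Lichtenbaum and split on $\dim R$. If $\dim R=0$, Gorensteinness makes $R$ self-injective with simple socle, so $E\cong R$ is free, and the Lichtenbaum property applied to $F=k$ forces $R$ to be a field, contradicting the hypothesis. If $\dim R\ge 2$, then $\depth R\ge 2$ gives $\Ext_R^1(k,R)=0$, hence $\Tor_1^R(E,k)=0$; since $k$ is not free this contradicts the Lichtenbaum property. Finally, if $\dim R=1$ but $R$ is not a DVR, then $\fm$ cannot be free (a free $\fm$ would give $\pd_R k\le 1$, forcing $R$ regular of dimension one), while $\injdim_R R=1$ together with the exact sequence $0\to\fm\to R\to k\to 0$ yields $\Ext_R^1(\fm,R)\cong\Ext_R^2(k,R)=0$; thus $\Tor_1^R(E,\fm)=0$ with $\fm$ non-free, again contradicting Lichtenbaum. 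So $R$ must be a DVR.

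The main obstacle is the Matlis-duality translation at the outset: one must verify the Artinianness of $\Tor_1^R(E,F)$ (so that Matlis duality is faithful on it) and use the flatness of $\widehat R$ to transfer Ext-vanishing from $\widehat R$ back to $R$, thereby avoiding any completeness hypothesis on $R$. Once this equivalence is in hand, the three cases are short consequences of standard depth and injective-dimension arithmetic over Gorenstein local rings.
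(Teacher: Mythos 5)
Your proof is correct, and it takes a genuinely different route from the paper's. The paper works directly with local cohomology: since $R$ is Gorenstein, $E\cong H^d_{\fm}(R)$ and the \v{C}ech complex furnishes a flat resolution of $E$, so $\Tor_i^R(E,M)\cong H_{\fm}^{d-i}(M)$ for \emph{all} $i$ and $M$. Both implications then reduce to (non-)vanishing of local cohomology; in particular the forward direction over a DVR reads $\Tor_1^R(E,F)=0\Rightarrow H^0_{\fm}(F)=0\Rightarrow\depth F\geq 1$, and Auslander--Buchsbaum finishes. You instead use the Matlis-duality identification $\Tor_1^R(E,F)^\vee\cong\Ext_R^1(F,\widehat R)\cong\Ext_R^1(F,R)\otimes_R\widehat R$ to reduce the Lichtenbaum condition for $E$ to the vanishing of $\Ext_R^1(F,R)$, and then argue case by case on $\dim R$ using depth and injective-dimension arithmetic, with the structure theorem over a PID for the forward direction. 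Your translation step is valid: $\Tor_1^R(E,F)$ is indeed a subquotient of $E^n$, Matlis duality is faithful, the $\Ext$-base-change holds because $F$ admits a resolution by finitely generated free modules and $\widehat R$ is flat, and faithful flatness of $\widehat R$ closes the loop with no completeness hypothesis. The paper's \v{C}ech route is a bit slicker in that it gives a uniform formula for all $\Tor_i^R(E,-)$ at once and handles both directions by the same computation; your route avoids local cohomology entirely, gives a cleaner elimination of the case $\dim R=0$ (directly from $E\cong R$, without invoking the paper's Corollary on injective dimension of Lichtenbaum modules over artinian rings), and makes visible the equivalence $\Tor_1^R(E,F)=0\Leftrightarrow\Ext_R^1(F,R)=0$, which is of some independent interest.
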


\begin{proof} First, assume that $E:=E_R(R/\fm)$ is a Lichtenbaum module for $R$. Let $d:=\dim R$ and $\underline{x}=x_1,
\dots, x_d$ be a system of parameters for $R$. Recall that the \v{C}ech complex of $R$ with respect to $\underline{x}$
has the form $$\check{C}:=0\longrightarrow R \longrightarrow \oplus R_{x_i} \longrightarrow \dots \longrightarrow \oplus
R_{x_1\dots \hat x_i \dots x_d}\longrightarrow R_{x_1 \dots x_d} \longrightarrow 0,$$ and it provides a flat resolution
for the $R$-module $\text{H}_{\frak{m}}^d(R)$. Hence, 
$$\Tor_i^R(E,M)\cong \Tor_i^R(\text{H}_{\frak{m}}^d(R),M)\cong \text{H}_i(\check{C}\otimes_RM)\cong \text{H}_{\frak{m}}^{d-i}(M)$$
for every $R$-module $M$.
If $d\geq 2$, then $\Tor_1^R(E,R/\fm)\cong \text{H}_{\frak{m}}^{d-1}(R/\fm)=0$, while $R/\fm$ is not a free $R$-module.
So, $d\leq 1$. If $d=0$, then Corollary \ref{14} indicates that $\id_RE=\infty$, which is a contradiction. Thus $d=1$. Next,
we have $$\Tor_1^R(E,\Syz_1(R/\fm))\cong \text{H}_{\frak{m}}^0(\Syz_1(R/\fm))=0.$$Since $L$ is  Lichtenbaum, we deduce that $\Syz_1(R/\fm)$ is
free. This in turns is equivalent with  $\pd_R(k)\leq 1$. In other words, $R$ is a discrete valuation ring, because the ring is not a field.

Conversely, assume that $R$ is a discrete valuation ring. Let $F$ be a finitely generated $R$-module such that
$\Tor_1^R(E,F)=0$. We need to show  $F$ is free. As $\text{H}_{\frak{m}}^0(F)\cong \Tor_1^R(E,F)=0$, we obtain 
$\depth_RF\geq 1$. Over a discrete valuation ring, every module has finite projective dimension. Now, the Auslander-Buchsbaum
formula yields that $F$ is free.
\end{proof}

\begin{notation}
i) 	By $\mu(-)$ we mean the minimal number of elements that  needs to generates a finitely generated  module $(-)$.

ii) By $\ell(-)$ we mean the length function.

iii) Here, $e(R)$
is the Hilbert-Samuel multiplicity. 
\end{notation}

Let us connect to the multiplicity.

\begin{observation}\label{19} Let $(R,\fm)$ be a $1$-dimensional local ring of depth zero. If $e(R)=1$, then $R/\fm^n$ is not a Lichtenbaum module  for all $n\gg0$.
\end{observation}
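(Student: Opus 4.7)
The plan is to exhibit a finitely generated, non-free $R$-module $F$ with $\Tor_1^R(R/\fm^n,F)=0$ for all sufficiently large $n$, directly violating the Lichtenbaum property. A natural candidate emerges by killing the depth-zero obstruction: set $N:=\text{H}^0_\fm(R)$ and $S:=R/N$. Because $\depth_R R=0$, the prime $\fm$ is associated to $R$, so $N\neq 0$; since $N$ is finitely generated and $\fm$-torsion, it has finite length. The non-zero ideal $N$ annihilates $S$, so $S$ is non-faithful and therefore not free as an $R$-module.

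The hypothesis $e(R)=1$ further pins down the shape of $S$: after removing the $\fm$-torsion, $\fm\notin\Ass_R S$, so $\depth_R S\geq 1=\dim S$, and $S$ is a one-dimensional Cohen-Macaulay local ring. Applying $-\otimes_R R/\fm^n$ to $0\to N\to R\to S\to 0$ yields the length identity $\ell(R/\fm^n)=\ell(S/\fm^nS)+\ell(N/(N\cap\fm^n))$; the last term stabilizes at $\ell(N)$ for $n\gg 0$, so the Hilbert-Samuel polynomials of $R$ and $S$ share a leading coefficient, forcing $e(S)=1$. The classical fact that a one-dimensional Cohen-Macaulay local ring of multiplicity one is regular then shows $S$ is a DVR, a particularly clean test module to work with.

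For the vanishing of $\Tor$, the same sequence $0\to N\to R\to S\to 0$ tensored with $R/\fm^n$ gives the four-term exact sequence
\[0\longrightarrow\Tor_1^R(R/\fm^n,S)\longrightarrow N/\fm^nN\longrightarrow R/\fm^n\longrightarrow S/\fm^nS\longrightarrow 0,\]
whose first term identifies with $(N\cap\fm^n)/\fm^nN$. Since $N$ has finite length, $\fm^kN=0$ for some $k$; Artin-Rees supplies a constant $c$ with $N\cap\fm^n=\fm^{n-c}(N\cap\fm^c)\subseteq\fm^{n-c}N$ for $n\geq c$, and this vanishes as soon as $n\geq c+k$. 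Hence $\Tor_1^R(R/\fm^n,S)=0$ for all $n\gg 0$, and combined with the non-freeness of $S$ this certifies that $R/\fm^n$ fails the Lichtenbaum property for all such $n$. The only non-routine ingredient is the classical regularity criterion for Cohen-Macaulay rings of multiplicity one; the rest is a short-exact-sequence chase supported by Artin-Rees, and I foresee no serious obstacle.
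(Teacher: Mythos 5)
Your proof is correct, but it follows a genuinely different route from the paper's, and it is worth comparing them. The paper exploits $e(R)=1$ directly: from $\ell(\fm^n/\fm^{n+1})=1$ it concludes $\fm^n=xR$ for $n\gg 0$, picks a minimal prime $\fp$ avoiding $x$, reads off $\Tor_1^R(R/\fm^n,R/\fp)=0$ from the explicit two-step resolution of $R/xR$, and observes $R/\fp$ cannot be free since $R$ is not a domain. You instead test against $S:=R/H^0_\fm(R)$: since $N:=H^0_\fm(R)$ is a nonzero ideal annihilating $S$, the module $S$ is non-faithful hence non-free, and the identification $\Tor_1^R(R/\fm^n,S)\cong(N\cap\fm^n)/\fm^n N$ together with Artin-Rees and $\ell(N)<\infty$ kills the $\Tor$ for $n\gg 0$. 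Both arguments are sound. Two remarks on yours: first, your middle paragraph establishing $e(S)=1$ and $S$ a DVR is never used afterward — the non-freeness of $S$ is already secured by non-faithfulness, and the $\Tor$-vanishing needs only Artin-Rees — so it can be deleted. Second, and more interestingly, after that deletion your argument nowhere invokes $e(R)=1$ (nor even $\dim R=1$ beyond ensuring $R$ is not artinian, so that $N\subsetneq R$), and hence actually proves the stronger statement that $R/\fm^n$ fails to be Lichtenbaum for $n\gg 0$ over any non-artinian local ring of depth zero. This is essentially the content the paper obtains later in Proposition~\ref{20}(ii)$\Rightarrow$(i) by a closely related short-exact-sequence chase with the ideal $(0:_R x)$; your choice of $H^0_\fm(R)$ packages it a bit more cleanly and avoids the case analysis there.
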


\begin{proof}  By definition,  $\ell(R/\fm^n)=e(R)n-c=n-c $  for all $n\gg 0$, where $c$ is a constant. Hence, $$1=(n+1)- c - (n-c) =
\ell(R/\fm^{n+1}) - \ell(R/\fm^n) = \ell(\fm^n/\fm^{n+1})=\mu (\fm^n),$$where
the last  (resp. the third)  equality is in \cite[Theorem 2.3]{Mat} (resp.  follows from the short exact sequence $0\to\fm^n/\fm^{n+1}\to  R/\fm^{n+1} \to R/\fm^n\to 0$).   
Say $ \fm^n = xR $ for some $  x \in R $ and for all $ n\gg 0 $. 
In particular, $ x $ is a system of parameter. Also, $ x $ is not nilpotent. 
There is a minimal prime ideal $ \fp$ such that $ x \notin \fp $. This implies that the multiplication map $ R/\fp \stackrel{x}\longrightarrow R/\fp$ is injective. 
Now, we look at
$$\xymatrix{& &R^n\ar[r]^{}\ar[d]_{\pi}&R\ar[r]^{x}&R\ar[r]& R/xR \ar[r]^{} &0\\
&& (0:_Rx)\ar[ur]_{\subseteq}
&&&}$$where $\pi$ is the natural epic.
We apply $ - \otimes R/\fp $ to the displayed exact sequence and deduce the following:

	\begin{equation*}
\begin{array}{clcr}
\Tor_{1}^R (R/\fm^{n} ,  R/\fp ) &= \Tor_{1}^R (R/xR, R/\fp)\\&=H ((R/\fp)^n \overset{\varphi}\longrightarrow R/\fp \overset{x}\longrightarrow R/\fp)\\ &=\frac{\Ker (R/\fp \stackrel{x}\longrightarrow R/\fp)}{ \im (\varphi)}\\
&
=0.
\end{array}
\end{equation*}
If $ R/\fp $ were be free, then we should had $\fp=0$, this is impossible, because $\depth(R)=0$.
Since $ R/\fp $ is not free, $ R/\fm^n $ is not Lichtenbaum.
\end{proof}

Here, we extend the previous observation to the general setting. 
\begin{proposition}\label{20}  Let $(R,\frak{m})$  be a local ring which is not artinian. Then the following are equivalent:
\begin{itemize}
\item[(i)]  $\depth_R R>0$.
\item[(ii)] $R/\fm^n$ is a Lichtenbaum module for $R$ for all integers $n>0$.
\item[(iii)]  $R/\fm^n$ is  tor-rigid for all $n>0$.
\end{itemize}
\end{proposition}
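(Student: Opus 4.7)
My plan is to prove the cycle $(i) \Rightarrow (ii) \Rightarrow (iii) \Rightarrow (i)$. The implication $(ii) \Rightarrow (iii)$ is immediate: any Lichtenbaum module $L$ is tor-rigid at level $n_0=1$, since vanishing of $\Tor_1^R(L,F)$ already forces $F$ free, killing every higher $\Tor_i^R(L,F)$.

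For $(iii) \Rightarrow (i)$, I argue by contrapositive. Assume $\depth R = 0$; the socle contains a nonzero $z$ with $\fm z = 0$, so $(z) \cong R/\fm$. By Krull's intersection theorem $\bigcap_n \fm^n = 0$, so I fix $n$ with $z \notin \fm^n$. Tensoring the short exact sequence $0 \to R/\fm \to R \to R/(z) \to 0$ (where the first map sends $\bar 1 \mapsto z$) with $R/\fm^n$, the induced map $R/\fm \cong R/\fm \otimes R/\fm^n \to R/\fm^n$ becomes $\bar 1 \mapsto \bar z$, which is nonzero (hence injective, since $R/\fm$ is simple) because $z \notin \fm^n$; this gives $\Tor_1^R(R/\fm^n, R/(z)) = 0$. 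The long exact sequence collapses for $i \geq 2$ to $\Tor_i^R(R/\fm^n, R/(z)) \cong \Tor_{i-1}^R(R/\fm^n, R/\fm)$, and the latter is nonzero for every $i \geq 2$ since $R$ is not regular (being non-artinian of depth zero), forcing $\pd_R(R/\fm^n) = \infty$. Thus $R/\fm^n$ fails tor-rigidity at level $n_0 = 1$; to propagate the failure to every higher level, I form iterated cosyzygies of $R/(z)$ to shift the vanishing/nonvanishing pattern by one degree each time.

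For $(i) \Rightarrow (ii)$, assume $\depth R > 0$; fix a regular element $x \in \fm$ and a finitely generated $F$ with $\Tor_1^R(R/\fm^n, F) = 0$; I want $F$ free. I mirror the strategy of Example \ref{none}(i). First, I use that $x^n \in \fm^n$ is regular and combine the short exact sequences $0 \to x^n R \to \fm^n \to \fm^n / x^n R \to 0$ and $0 \to \fm^n \to R \to R/\fm^n \to 0$ to extract a Nakayama-type cancellation on $\Tor_1^R(\fm^n, F)$, forcing it to vanish; hence $\Tor_2^R(R/\fm^n, F) = 0$ and $\pd_R F \leq 1$. Second, since $R$ is non-artinian, $\fm^{n-1}/\fm^n \neq 0$, which produces an injection $R/\fm \hookrightarrow R/\fm^n$; from the resulting short exact sequence $0 \to R/\fm \to R/\fm^n \to Q \to 0$ and the already-established $\pd_R F \leq 1$, the boundary term $\Tor_2^R(Q, F)$ vanishes, giving $\Tor_1^R(R/\fm, F) = 0$, so $F$ is free.

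The main obstacle is the first stage of $(i) \Rightarrow (ii)$. Unlike the situation in Example \ref{none}(i), there is no direct short exact sequence $0 \to R/\fm^n \stackrel{x}\to R/\fm^n \to \ast \to 0$ on which to apply Nakayama; the argument has to be propagated through the syzygy structure of $R/\fm^n$ using both the regularity of $x^n$ and the Burch-type condition $\fm^{n+1} \subsetneq \fm^n$ guaranteed by $R$ being non-artinian. This is where the hypothesis $\depth R > 0$ enters decisively.
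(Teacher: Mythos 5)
Your cycle $(i)\Rightarrow(ii)\Rightarrow(iii)\Rightarrow(i)$ is structurally different from the paper, which proves $(i)\Leftrightarrow(ii)$ and $(i)\Leftrightarrow(iii)$ separately (citing \cite[Cor. 2.14]{CK} for $(i)\Rightarrow(ii)$ and \cite[Cor. 1.3]{CT} for $(i)\Rightarrow(iii)$). Your $(ii)\Rightarrow(iii)$ and $(iii)\Rightarrow(i)$ are sound. The first is a nice shortcut not appearing in the paper: a Lichtenbaum module is automatically rigid at degree one because vanishing of $\Tor_1$ forces freeness, killing all higher $\Tor$. The second is essentially the paper's argument reorganized: both locate a socle element $z$, a power $n$ with $z\notin\fm^n$, and compute $\Tor_1^R(R/\fm^n,R/(z))=0$; you then observe directly that $\Tor_2\cong\Tor_1(R/\fm^n,k)\neq 0$ since $\pd_R(R/\fm^n)=\infty$, whereas the paper derives $\Tor_1(R/\fm^n,k)=0$ from rigidity and contradicts $\fm^n\neq 0$ --- same content. (Your extra sentence about iterated cosyzygies to defeat rigidity at every starting degree is both vague and, given how the paper actually uses the definition --- always at $n_0=1$ --- unnecessary; you should simply note that the counterexample at degree one suffices. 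If you genuinely wanted to rule out all $n_0$, the cosyzygy step would need $R/(z)$ to be an arbitrarily high syzygy, which is not automatic.)

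The genuine gap is in $(i)\Rightarrow(ii)$, and you half-acknowledge it. Your first stage --- deducing $\Tor_1^R(\fm^n,F)=0$, equivalently $\Tor_2^R(R/\fm^n,F)=0$, from $\Tor_1^R(R/\fm^n,F)=0$ --- is not established by the short exact sequences you list. Tensoring $0\to x^nR\to\fm^n\to\fm^n/x^nR\to 0$ with $F$ only yields an injection $\Tor_1^R(\fm^n,F)\hookrightarrow\Tor_1^R(\fm^n/x^nR,F)$ (because $x^nR\cong R$), not a surjective self-map; there is no multiplication-by-$x^n$ endomorphism of $\fm^n$ with cokernel $\fm^n/x^nR$ (that cokernel would be $\fm^n/x^n\fm^n$, a different module), so Nakayama has nothing to cancel. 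This is precisely the obstruction that makes the $\fm/x\fm$ argument of Example \ref{none}(i) not transfer, and it is the nontrivial content of \cite[Cor. 2.14]{CK}, whose proof exploits the weakly $\fm$-full structure of $\fm^n$ and colon-ideal relations, not a direct Nakayama cancellation. Your second stage --- the embedding $R/\fm\hookrightarrow R/\fm^n$ sending $\bar 1$ to a class in $\fm^{n-1}\setminus\fm^n$, and then using $\pd_R F\le 1$ to kill $\Tor_2(Q,F)$ and conclude $\Tor_1(k,F)=0$ --- is correct, but it rests on the unproved first stage. As written, $(i)\Rightarrow(ii)$ is incomplete; either carry out the colon-ideal argument of \cite{CK} or cite it as the paper does.
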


\begin{proof} (i) $\Rightarrow$ (ii):  If $R$ is regular, then the assertion follows by Corollary \ref{cr}.
	So, without loss of generality, we may assume that $R$ is not regular. Now, \cite[Corollary 2.14]{CK} yields the claim.

(ii) $\Rightarrow$ (i): Suppose $R/\fm^n$ is a Lichtenbaum module for all integers $n>0$. We are going to show $\depth_R R>0$.
On the contrary, assume that $\depth_R R=0$. Then $ \fm \in \Ass_RR$. By definition, there is some nonzero element $x$ of $R$ such that $\fm=(0:_Rx)$.
Since $R$ is not a field, we get that $x\in \fm$. By Krull's intersection theorem, we have $\underset{n\in\mathbb{N}}
\bigcap \fm^n =0$. As $x\neq 0$, there is a natural number $n$ such that $x \notin \fm^n $.

Let $y$ be  in $\langle x \rangle\cap \fm^n$. Take $r\in R$
be such that $y=rx$. We have two possibilities: Either $r\in \fm$ or $r\notin \fm$. In the first case  $rx= 0$,
and so $y=0$. Now, suppose that $r\notin \fm$. As the ring is local, $r$ should be a unit. From this, $x=r^{-1}y\in \fm^n$. By the
choice of $n$, we get to a contradiction. In sum, we proved that $\langle x \rangle\cap \fm^n =0$. In particular, $$\Tor_{1}^R (R/\fm^n,R/\langle x \rangle)=(\langle x
\rangle\cap \fm^n)/x \fm^n =0.$$ Now, recall that  $R/\langle x \rangle$ is not   free. In view of Definition \ref{1}  we conclude that  $R/\fm^n$ is not Lichtenbaum.
This contradiction shows that $\depth_R R>0$.

(i) $\Rightarrow$ (iii): This holds by \cite[Corollary 1.3]{CT}.

(iii) $\Rightarrow$ (i):
On the contrary, suppose $\depth_R R=0$, it follows  $\fm=(0:_Rx)$ for some nonzero $x\in R$. By Krull's
intersection theorem,   there is a natural number $n$ such that $x\notin \fm^n $. In the previous
argument, we observed  that $\Tor_{1}^R (R/\fm^n,R/\langle x \rangle)=0.$ As $R/\fm^n$ is tor-rigid, it turns out that
$\Tor_{2}^R(R/\fm^n,R/\langle x \rangle)=0$. From
$0 \to \langle x \rangle \to R \to R/\langle x \rangle \to 0$
we deduce that $$\Tor_{1}^R(R/\fm^n,\langle x \rangle)\cong\Tor_{2}^R (R/\fm^n,R/\langle x \rangle) \quad(+)$$
Also, $0 \to  (0:_Rx) =\fm   \to R \stackrel{x} \to \langle x \rangle \to 0$  yields that $$R/ \fm \cong \langle x \rangle\quad(\ast)$$
Combining these together: $$\Tor_{1}^R(R/\fm^n,R/\fm)\stackrel{(\ast)}\cong  \Tor_{1}^R (R/\fm^n ,\langle x \rangle)\stackrel{(+)}\cong
\Tor_{2}^R (R/\fm^n,R/\langle x \rangle)=0.$$

As $R/\fm$ is Lichtenbaum, it follows that $R/\fm^n$  is  free.
By
taking annihilator, $\fm^n=0$. This is excluded by the assumptions, a
 contradiction. So, the desired claim follows.
\end{proof}
\begin{remark} Adopt the above assumption.
\begin{itemize}
	\item[(a)]  Proposition \ref{20} may be considered as  the reverse parts of \cite[Corollary 1.3]{CT} and  \cite[Corollary 2.14]{CK}.

	\item[(b)] The non-artinian assumption is important. For example, let $R:=k[x]/(x^2)$. It is easy to see  $R/\fm^n$ is  tor-rigid for all $n>0$. But, $\depth_R(R)=0$.
	\item[(c)] In on other to see the item $b)$ is special, let $R:=k[x]/(x^n)$ where $n>2$. It is easy to see  $R/\fm^{n-1}$ is not tor-rigid. Indeed,
	let $M:=R/x^{n-1}R$, and look at its free resolution $$\ldots\lo R\stackrel{x}\lo R\stackrel{x^{n-1}}\lo R\stackrel{x}\lo\ldots  \stackrel{x}\lo R\stackrel{x^{n-1}}\lo R\lo M \lo 0,$$ which give us $\Tor_{i}^R (R/\fm^{n-1},M)=H^i(\ldots\lo M\stackrel{x}\lo M\stackrel{x^{n-1}}\lo M\stackrel{x}\lo\ldots), $ and so	$$
\Tor_{i}^R (R/\fm^{n-1},M)	=\left\{\begin{array}{ll}
\frac{xR}{\fm^{n-1}} &\mbox{if } i\in2\mathbb{N}+1 \\
	0  	&\mbox{otherwise } 
	\end{array}\right.
	$$In particular, $R/\fm^{n-1}$ is not tor-rigid.
	\item[(d)] Concerning (ii) $\Leftrightarrow$ (i) we may reduce the assumption from non-artinian to the case that the ring is not field.
\end{itemize}
\end{remark}
Suppose $\depth(R)=0$. It may be nice to find $\{n:R/\fm^n \emph{ is not Lichtenbaum} \}$.
This may be large, as the next example suggests.
\begin{example}\label{cl}In this item rings are of depth zero and of dimension  bigger than zero.
\begin{itemize}
	\item[(i)] 	Let $R:=k[[x,y]]/(x^2,xy)$ and let $n>0$.  Then $$R/\fm^n \emph{ is  Lichtenbaum } \Longleftrightarrow n=1 .$$
\item[(ii)] More general than i), suppose the ring is equipped with an element $x\in\fm^{\ell-1}\setminus\fm^\ell$ such that $x\fm=0$ and $\ell>0$. Then
$R/\fm^n$ is not Lichtenbaum for all $n\geq \ell$.

\item[(iii)] Let $R:=k[[x,y]]/(x^3,x^2y)$. Then $R/\fm^n$ is not Lichtenbaum for all $n\geq 3$.
\item[(iv)] More general than i), suppose  $\fm^n$ is principal for all $n\geq \ell$. Then
$R/\fm^n$ is not Lichtenbaum for all $n\geq \ell$.
\item[(v)]	Let $R:=k[[x,y]]/(x^3,x^2y,xy^2)$. Then $R/\fm^n$ is not Lichtenbaum for all $n\geq 3$.
\end{itemize} 
\end{example}

\begin{proof}  i)
Clearly, $R/ \fm$ is Lichtenbaum. For the converse part, recall that 
 $R$ is of depth zero, multiplicity one and dimension one.
In fact,
$\fm^n=(y^n)$ for all $n\geq 2$. Now, apply  the proof of Observation \ref{19} to see $R/\fm^n$ is not  Lichtenbaum, when $n>1$.

ii) See the proof of Proposition \ref{20}.

iii) This is in part (ii). Note that $x^2\fm=0$ and that $x^2\not\in \fm^3$.

iv) See the proof of (i).

v) Since $ \fm^n=(y^n)$  for all $n\geq 3$, it is enough to apply part (iv).
\end{proof}

	Let us recover a funny result of Levin-Vasconcelos:

\begin{corollary}\label{27} Let $(R,\frak{m})$  be a non-artinian local ring. Then $R$ is regular if and only if
	$R/\fm^n$ has finite injective dimension for some natural number $n$.
\end{corollary}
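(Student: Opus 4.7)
The plan is to prove the nontrivial direction by exhibiting $R/\fm^n$ as a Lichtenbaum module of finite projective dimension, whence Lemma \ref{123}(i) gives regularity; the forward direction is immediate, since over a regular local ring (which is Gorenstein) every finitely generated module has finite injective dimension.

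Assume $\id_R(R/\fm^n)<\infty$. I would first invoke the Bass conjecture (now a theorem, via Peskine--Szpiro, Hochster, Roberts) to conclude that $R$ is Cohen--Macaulay. Non-artinianness then yields $d:=\dim R=\depth R\ge 1$, and Bass's formula delivers $\id_R(R/\fm^n)=d$. Since $\depth R>0$, Proposition \ref{20} promotes $R/\fm^n$ to a Lichtenbaum module for $R$; by Lemma \ref{123}(i), it thus suffices to show $\pd_R(R/\fm^n)<\infty$.

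To this end, I would pick an $R$-regular sequence $x_1,\dots,x_d$ contained in $\fm^n$ (possible because $\fm^n$ is $\fm$-primary in a Cohen--Macaulay ring of depth $d$), so each $x_i$ annihilates $R/\fm^n$. Iterating Rees's change-of-rings formula $\id_{R/xR}(M)=\id_R(M)-1$ yields $\id_{\bar R}(R/\fm^n)=0$, where $\bar R:=R/(x_1,\dots,x_d)$ is artinian local. Identifying $R/\fm^n\cong\bar R/\bar\fm^n$ as $\bar R$-modules via $(x_1,\dots,x_d)\subseteq\fm^n$, injectivity over $\bar R$ forces $R/\fm^n\cong E_{\bar R}(k)^m$ for some $m\ge 1$; comparing lengths via $m\cdot\ell(\bar R)=\ell(\bar R/\bar\fm^n)\le\ell(\bar R)$ (using Matlis duality $\ell(E_{\bar R}(k))=\ell(\bar R)$ over the artinian ring $\bar R$) pins down $m=1$ and forces $\bar\fm^n=0$. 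Therefore $\fm^n=(x_1,\dots,x_d)$, and the Koszul complex on this regular sequence exhibits $\pd_R(R/\fm^n)=d<\infty$.

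The delicate step is the length comparison: one must verify that the $\bar R$-module structure on $R/\fm^n$ is indeed the cyclic quotient $\bar R/\bar\fm^n$, and appeal to Matlis duality over the artinian local ring $\bar R$ to get $\ell(E_{\bar R}(k))=\ell(\bar R)$. Once these pieces are in hand, the chain Bass $\to$ Proposition \ref{20} $\to$ Rees $\to$ Koszul $\to$ Lemma \ref{123}(i) is a routine assembly of classical facts.
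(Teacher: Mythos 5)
Your setup — using the Bass theorem to get Cohen--Macaulayness, then Proposition~\ref{20} to see that $R/\fm^n$ is Lichtenbaum — matches the paper exactly. The divergence, and the gap, is in the Rees step.

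The change-of-rings formula you invoke, namely ``$\id_{R/xR}(M)=\id_R(M)-1$ whenever $x$ is $R$-regular and $xM=0$,'' only runs in one direction: if $\id_{R/xR}(M)<\infty$ then $\id_R(M)=\id_{R/xR}(M)+1$. The direction you need, $\id_R(M)<\infty\Rightarrow\id_{R/xR}(M)<\infty$, is false. Concretely, let $R=k[[x,y]]$, $M=R/\fm^2$, $z=x^2\in\fm^2$, and $S=R/zR=k[[x,y]]/(x^2)$. Then $\id_R(M)=2$, but over $S$ one has $M=S/I$ with $I=(xy,y^2)=y\fm_S\cong\fm_S$ (multiplication by $y$ is injective on $S$), so $\pd_S(M)=\pd_S(\fm_S)+1=\infty$ since $S$ is not regular; $S$ is Gorenstein, so $\id_S(M)=\infty$ as well. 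So the iteration does not produce $\id_{\bar R}(R/\fm^n)=0$, and the Matlis-duality length count that follows is moot. The version of Rees that is actually true (e.g.\ Bruns--Herzog~3.1.15) concerns $\id_{R/xR}(N/xN)$ for $x$ $N$-regular; it does not apply to an $R/xR$-module that is not visibly of that form. A quick sanity check already flags the problem: your argument would conclude $\fm^n=(x_1,\dots,x_d)$ for \emph{any} regular sequence $x_1,\dots,x_d\subseteq\fm^n$, which for a regular ring of dimension $d\ge 2$ and $n\ge 2$ would force $\mu(\fm^n)=d$, false since $\mu(\fm^n)=\binom{n+d-1}{d-1}>d$.

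The paper's proof avoids this entirely. Once one knows $\id_R(R/\fm^n)=d$ and $R/\fm^n$ is Lichtenbaum, it takes an arbitrary finitely generated $M$, notes $\Ext^{d+1}_R(M,R/\fm^n)=0$, shifts to $\Ext^1_R(\Syz_d(M),R/\fm^n)=0$, feeds this into the Auslander--Bridger four-term exact sequence to get $\Tor^R_1(\Tr\,\Omega\,\Syz_d(M),R/\fm^n)=0$, and applies the Lichtenbaum property together with $\Tr\Tr(-)\cong(-)$ to conclude $\Omega\,\Syz_d(M)$ is free, i.e.\ $\pd_R(M)<\infty$. Auslander--Buchsbaum--Serre then gives regularity. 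You should replace the Rees/Matlis block with this Auslander--Bridger step (which is the same mechanism used in Proposition~\ref{25}(i)$\Rightarrow$(ii)); the rest of your outline stands.
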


\begin{proof} If $R$ is regular, then every $R$-module has finite injective dimension.
	Now, assume that there exists a natural number $n$ such that $R/\fm^n$ has finite injective dimension. Immediately, Bass
	theorem implies that $R$ is Cohen-Macaulay, and  so $\depth_R R=\dim R> 0$.  Hence, $ R/\fm^n $ is a Lichtenbaum $R$-module
	by Proposition \ref{20}. Set $d:=\dim R$. Then $\id_R(R/\fm^n)=d.$
	Let $ M $ be any finitely generated $R$-module. As $ \id_R(R/\fm^n)=d $, we have $$\Ext_{R}^{1}(\Syz_{d} (M),R/\fm^n)\cong
	\Ext_{R}^{d+1} (M,R/\fm^n)=0.$$ By applying this along  from the 4-terms exact sequence

		\[\begin{array}{rl}
&	\Tor_{2}^{R}(\Tr(\Omega(\Syz_{d}(M))), R/\fm^n)\lo \Syz_{d} (M) \otimes_R R/\fm^n
	\longrightarrow \\&\Ext_{R}^{1}
	(\Syz_{d} (M), R/\fm^n)\to \Tor_{1}^{R} ( \Tr(\Omega(\Syz_{d}(M))), R/\fm^n)
	 \rightarrow 0,
	\end{array}\]
we deduce that $\Tor_{1}^{R} (\Tr(\Omega(\Syz_{d}(M))), R/\fm^n)=0$. Since
$R/\fm^n $ is   Lichtenbaum,
 $\Omega(\Syz_{d}(M))$ is also free. In other words, $ \pd_RM< \infty $. We proved every finitely generated has finite
	projective dimension. Thanks to  a theorem of Auslander-Buchsbaum-Serre, this means that $ R $ is regular.
\end{proof}Suppose
$S\subseteq F$ are finitely generated. By Artin-Rees,
there is $k>0$ such that $\fm^{n-k}(\fm^{k}F\cap S)= \fm^nF\cap S$ for all $n>k$.
Why $k>0$? In fact, the following stronger property holds:
\begin{corollary}
 Let $(R,\frak{m})$  be a local ring  such that $\depth_R R>0$  and $M$ be finitely generated which is not free. Let $F:=R^{\beta_0(M)}\to M\to 0$ be the natural map. 
 Then $\fm^n\Syz_1(M)\neq \fm^nF\cap\Syz_1(M)$  for all $n>0$.
\end{corollary}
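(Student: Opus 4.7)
The plan is to translate the set-theoretic condition into the vanishing of a $\Tor$, and then invoke Proposition \ref{20}. Set $S:=\Syz_1(M)$, so we have the short exact sequence
\[
0\longrightarrow S\longrightarrow F\longrightarrow M\longrightarrow 0.
\]
Tensoring with $R/\fm^n$ and using that $F$ is free, I would obtain the standard identification
\[
\Tor_1^R(M,R/\fm^n)\;\cong\;\frac{S\cap \fm^n F}{\fm^n S},
\]
which reduces the problem to showing $\Tor_1^R(M,R/\fm^n)\neq 0$ for every $n>0$.

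Next I would note that the hypothesis $\depth_R R>0$ forces $\dim R\geq 1$, so $R$ is non-artinian and Proposition \ref{20} applies: $R/\fm^n$ is a Lichtenbaum module for every $n>0$. If we had equality $\fm^n S=S\cap \fm^n F$ for some $n>0$, then by the displayed isomorphism $\Tor_1^R(R/\fm^n,M)=0$, and so, by the Lichtenbaum property, $M$ would be free, contradicting the assumption. Hence $\fm^n S\neq S\cap \fm^n F$ for all $n>0$, which is exactly the conclusion.

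The only genuine content is the appeal to Proposition \ref{20}; the rest is a bookkeeping exercise with the snake lemma. I do not expect any obstacle, since the Lichtenbaum property of $R/\fm^n$ is precisely designed to rule out such a vanishing of $\Tor_1$ for non-free modules $M$.
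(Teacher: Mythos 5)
Your proposal is correct and follows the same route as the paper: identify $\Tor_1^R(R/\fm^n,M)$ with $(\Syz_1(M)\cap\fm^nF)/\fm^n\Syz_1(M)$ and invoke Proposition \ref{20} to conclude $R/\fm^n$ is Lichtenbaum, so the $\Tor$ cannot vanish unless $M$ is free. You are slightly more careful than the paper in noting explicitly that $\depth_R R>0$ forces $R$ to be non-artinian (the hypothesis of Proposition \ref{20}), but the argument is the same.
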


\begin{proof}
Since $\depth(R)>0$, and in view of Proposition \ref{20}, we know $R/\fm^n$ is Lichtenbaum.
Since $M$ is not free and as $$\Tor_{1}^R (R/\fm^n ,M)= {\fm^nF\cap\Syz_1(M)}/{\fm^n\Syz_1(M)},$$
we get the desired claim.
\end{proof}

In the next section,  we will revisit Corollary \ref{27} from a different point of view. Here, we may talk  more:

\begin{proposition}\label{25} Let $ (R, \fm,k) $ be a local ring.  The following assertions are equivalent:
	\begin{itemize} 
		\item[(i)]  If  $ L $ is a finitely generated Lichtenbaum module, then $ \id_R (L) <\infty $.
		\item[(ii)] $ R $ is regular.  
		\item[(iii)]    Any finitely generated module of depth zero is Lichtenbaum.
	\end{itemize}
\end{proposition}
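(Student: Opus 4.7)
I will take $(ii)$ as the hub and establish the four implications $(ii) \Rightarrow (i)$, $(ii) \Rightarrow (iii)$, $(i) \Rightarrow (ii)$ and $(iii) \Rightarrow (ii)$. The first two are essentially free of charge: over a regular local ring every finitely generated module has global dimension bounded by $\dim R$, and in particular has finite injective dimension, which gives $(ii) \Rightarrow (i)$; while $(ii) \Rightarrow (iii)$ is exactly the content of Corollary~\ref{cr}.

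For $(i) \Rightarrow (ii)$, the guiding observation is that $k$ is always a Lichtenbaum module, so hypothesis $(i)$ forces $\id_R(k)<\infty$, which classically implies that $R$ is regular. Alternatively, staying closer to the machinery developed in the paper, one first extracts from $\id_R(k)<\infty$ that $R$ is Gorenstein, disposes of the artinian case via Corollary~\ref{14} (which would yield $\id_R(k)=\infty$ unless $R$ is a field), and in the non-artinian case combines Proposition~\ref{20} (so that each $R/\fm^n$ is Lichtenbaum and hence, by $(i)$, of finite injective dimension) with Corollary~\ref{27} to conclude that $R$ is regular.

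The substantive step is $(iii) \Rightarrow (ii)$: my plan is to manufacture a Lichtenbaum module of finite projective dimension and then close the argument via Lemma~\ref{123}(i). If $\depth_R R = 0$, then $R$ itself is a finitely generated module of depth zero, hence Lichtenbaum by $(iii)$; but $\Tor^R_1(R,F) = 0$ holds for every $F$, so every finitely generated $R$-module would be free, forcing $R$ to be a field. If $\depth_R R = d > 0$, I would pick a maximal $R$-regular sequence $x_1,\dots,x_d \in \fm$ and set $L := R/(x_1,\dots,x_d)$. Then $L$ is a nonzero finitely generated module with $\depth_R L = 0$ and $\pd_R L = d < \infty$ via the Koszul resolution. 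By $(iii)$, $L$ is Lichtenbaum, and since $\dim R \geq d > 0$ guarantees that $R$ is not a field, Lemma~\ref{123}(i) concludes that $R$ is regular.

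The only step where I would anticipate any real difficulty is $(iii) \Rightarrow (ii)$, since a priori the hypothesis is a statement about every depth-zero module and we need to isolate a single useful one. The regular-sequence construction together with Lemma~\ref{123}(i) dispatches this obstacle in a few lines, so overall the argument is short once this trick is in place.
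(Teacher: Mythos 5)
Your proof is correct and a bit cleaner than the paper's. For $(i)\Rightarrow(ii)$ you simply plug in $L=k$, recall that $k$ is always Lichtenbaum, and invoke the classical equivalence $\id_R(k)<\infty \Leftrightarrow \pd_R(k)<\infty$ (which holds because the Bass and Betti numbers of $k$ agree); the paper instead applies the Auslander--Bridger four-term exact sequence to $\Tr\Omega\Syz_d(M)$ to show $\pd_R(M)<\infty$ for every $M$, which proves the same thing but with more machinery. The real divergence is in $(iii)\Rightarrow(ii)$. You take $L:=R/(x_1,\dots,x_d)$ for a maximal $R$-sequence, note that $\depth_R(L)=0$ and $\pd_R(L)=d<\infty$ via Koszul, invoke $(iii)$ to see that $L$ is Lichtenbaum, and then close with Lemma~\ref{123}(i). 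The paper's argument for this direction is substantially longer: it proves by induction that an initial segment $\underline{x}_j$ is a regular sequence on $\Syz_i(k)$, then computes $\Tor_1^R(R/\underline{x}R,\Syz_d(k))=H_1(\mathbf{K}(\underline{x},\Syz_d(k)))=0$, and concludes that $\Syz_d(k)$ is free. Your route reaches the same vanishing via the dimension shift inside Lemma~\ref{123}(i), where $\pd_R(L)=d$ gives $\Tor_{d+1}^R(L,k)=\Tor_1^R(L,\Syz_d(k))=0$ directly, so the inductive regular-sequence argument on syzygies is not needed. Both approaches are valid; yours is shorter and leans more heavily on the lemma the authors had already proved, while the paper's is more self-contained at this spot.
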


	\begin{proof}(i) $\Rightarrow$ (ii): 
	We know $ \id_R(L) = \depth_R (R) :=d$. From this,
	$ \Ext_{R}^{d+1} (M,L)=0$ for any finitely generated module $ M $. This in turn is equivalent with
	$ \Ext_{R}^1 (\Syz_{d} (M), L)=0 $.
	Recall that
	$$ \cdots \longrightarrow \Ext_{R}^1 (\Syz_{d} (M), L) \longrightarrow \Tor_{1} (\Tr \Omega \Syz_{d} (M),L) \longrightarrow 0,$$which yields
	$\Tor_{1} (\Tr \Omega \Syz_{d} (M),L) = 0$. Since $ L $ is Lichtenbaum and by definition, $ \Tr \Omega\Syz_{d} (M) $ is free. This yields that  $ \Omega \Syz_{d} (M) \cong\Tr(\Tr \Omega \Syz_{d} (M))$ is free. In other words, $ \pd_R(M) <\infty $ and so $ R $ is regular.

	(ii) $\Rightarrow$ (iii): This is in Corollary \ref{cr}.
	
	(iii) $\Rightarrow$ (i): First, assume that $\depth_R(R)=0$. By assumption,
	the ring $R$ is a  Lichtenbaum module. Since $\Tor^R_1(R,-)=0$ we deduce that any
	finitely generated module is free. Consequently,  $R$ is a field. So, the desired claim in this case is trivial.  Now, assume that
	 $d:=\depth_R(R)>0$. Let $\underline{x}:=x_1,\ldots,x_d$ be a maximal $R$-sequence.
	 It is well-known that $\depth_R(\Syz_d(k))=d$. Here, we need a little more, namely
$\underline{x}_j$ is 	a $\Syz_t(k)$-sequence, where  $\underline{x}_j:=x_1,\ldots,x_j$ and $1\leq j\leq  i \leq d$. Indeed,
	 we proceed by induction. Let $1\leq j\leq i$ and $\beta_i$ be the $i$-th Betti number of $R/ \fm$. We apply
	 $-\otimes_R R/\underline{x}_jR$ to the exact sequence
	 $$0\longrightarrow\Syz_{i+1}(k)\longrightarrow R^{\beta_i}\longrightarrow\Syz_i(k)\longrightarrow0,$$
	and deduce the following:$$0\longrightarrow\Tor_{1}^R(R/\underline{x}_jR,\Syz_i(k))\longrightarrow\frac{\Syz_{i+1}(k)}{\underline{x}_j\Syz_{i+1}(k)}\longrightarrow (R/\underline{x}_jR)^{\beta_i}\longrightarrow\frac{\Syz_{i}(k)}{\underline{x}_j\Syz_{i}(k)}\longrightarrow0.$$By  the induction hypothesis, we know $\underline{x}_j$
	is regular
	over $\Syz_i(k)$. We apply this to deduce that
	$\Tor_{1}^R(R/\underline{x}_jR,\Syz_i(k))=0$, and consequently,
	$${\Syz_{i+1}(k)}/{\underline{x}_j\Syz_{i+1}(k)}\subseteq (R/\underline{x}_jR)^{\beta_i}.$$
	Since
	$x_{j+1}$ is regular over $R/\underline{x}_jR$
	we conclude  that 	$x_{j+1}$ is regular over
$\frac{\Syz_{i}(k)}{\underline{x}_j\Syz_{i}(k)}$.
Therefore, $\underline{x}_{j+1}$ is 	regular over $\Syz_i(k)$.
Recall that   $\textbf{K}(\underline{x},-)$ is the Koszul complex of $(-)$ with respect to $\underline{x}$. Then
	$$\Tor^R_1(R/\underline{x}R,\Syz_d(k))=H_1\left(\textbf{K}(\underline{x},R)\otimes_R\Syz_d(k)\right)=H_1(\textbf{K}(\underline{x},\Syz_d(k)))=0.$$By
the	assumption, $R/\underline{x}R$
	is  Lichtenbaum. This yields that $\Syz_d(k)$ is free.
In other words, $\pd_R(k)<\infty$. In the light of Auslander-Buchsbaum-Serre theorem, we observe that  $R$ 	is regular.
	In particular, any (Lithtenbaum) module is of finite injective dimension. 
\end{proof}

\begin{corollary}\label{28} Let $(R,\frak{m})$  be a local ring. Assume that there exists a positive integer $n$ such that
$R/\fm^n$ has finite injective dimension.  The following assertions are valid:
\begin{itemize}
\item[(i)]    $ R $ is Gorenstein.
\item[(ii)] If $\dim R>0$, then $R$ is regular. \item[(iii)]If $\dim R=0$, then $\fm^n=0$.
\end{itemize}
\end{corollary}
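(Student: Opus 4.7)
The plan is to handle the three items in the order (ii), (iii), (i), since (ii) is immediate from earlier work and (iii) is the substantive new input.

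For (ii), the hypothesis $\id_R(R/\fm^n)<\infty$ together with $\dim R>0$ places us in the non-artinian regime of Corollary~\ref{27}, which already gives regularity.

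For (iii), assume $\dim R=0$, so $R$ is artinian local. I would first invoke Bass's formula: for any finitely generated module of finite injective dimension one has $\id_R M=\depth R$. Applied to $M=R/\fm^n$, this forces $\id_R(R/\fm^n)=0$, i.e.\ $R/\fm^n$ is actually \emph{injective}. Next, since $R/\fm^n$ is itself a (commutative) local ring it has no non-trivial idempotents, hence is indecomposable as an $R$-module. Matlis' classification of indecomposable injectives over a noetherian ring then forces $R/\fm^n\cong E_R(R/\fp)$ for some $\fp\in\Spec R$, and the support condition $\Supp_R(R/\fm^n)=\{\fm\}$ pins this down to $R/\fm^n\cong E_R(k)$. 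Finally, over an artinian local ring (which is automatically complete), Matlis duality yields $\ell_R(E_R(k))=\ell_R(R)$; hence $\ell_R(R/\fm^n)=\ell_R(R)$, which is equivalent to $\fm^n=0$.

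For (i), split on $\dim R$: if $\dim R>0$, part (ii) gives regularity, hence Gorensteinness; if $\dim R=0$, part (iii) yields $R=R/\fm^n$, so $\id_R R<\infty$, which for local rings is the definition of Gorenstein.

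The main obstacle sits inside step (iii): upgrading ``finite injective dimension'' to the much stronger conclusion ``$R/\fm^n\cong E_R(k)$.'' The two crucial ingredients are Bass's formula (which collapses $\id_R(R/\fm^n)$ all the way to $0$) and the indecomposability of the cyclic local module $R/\fm^n$ (which singles out the correct Matlis summand). Once (iii) is established, (i) is a short case distinction, and (ii) is a pure citation.
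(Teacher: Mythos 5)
Your proof is correct, but it routes the logical dependencies differently from the paper. The paper proves (i) first, in all dimensions at once, by citing the Peskine--Szpiro theorem that a local ring admitting a nonzero cyclic module of finite injective dimension is Gorenstein; it then feeds (i) into (iii): Gorensteinness together with $\dim R=0$ makes both $R$ and $R/\fm^n$ injective, so $0\to\fm^n\to R\to R/\fm^n\to 0$ splits, giving $R\cong\fm^n\oplus R/\fm^n$, whence $\fm^n=\fm^{2n}$ and Nakayama finishes (the paper also notes a variant: over a Gorenstein ring finite injective dimension gives finite projective dimension, so $R/\fm^n$ is free by Auslander--Buchsbaum, and taking annihilators kills $\fm^n$). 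You instead prove (iii) self-contained: Bass drops $\id_R(R/\fm^n)$ to $0$; the cyclic local module $R/\fm^n$ has local endomorphism ring $R/\fm^n$, hence is indecomposable; Matlis classification then forces $R/\fm^n\cong E_R(k)$; and length-preservation under Matlis duality gives $\ell(R/\fm^n)=\ell(R)$, i.e.\ $\fm^n=0$. You then recover (i) from (ii) and (iii). Both arguments are sound. The paper's is more uniform, settling (i) in one stroke via a single strong citation; yours replaces Peskine--Szpiro with elementary Matlis theory and makes (iii) logically independent of (i), which is arguably a cleaner dependency graph. Part (ii) is the same in both: a direct appeal to Corollary~\ref{27}.
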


\begin{proof}
	(i)  It is enough to recall from \cite{PS} that a local ring is Gorenstein if and only if it admits a nonzero cyclic
module. 

(ii) This is immediate from Corollary \ref{27}.

(iii)  By part (i) $R$ is Gorenstein.  As   $\dim R=0$, we have $ \id_R R=\depth_R R=0$, and so $R$ is injective. As $\id_R(R/\fm^n)<\infty$,
by the same reasoning, we get $ R/\fm^n $ is injective. From the short exact sequence
$$0\longrightarrow \fm^n \longrightarrow R \longrightarrow R/\fm^n \longrightarrow 0  \  \  \  (*),$$
we deduce $\id_R(\fm^n)= 0$. Thus, $(*)$ splits, and so $R=\fm^n \oplus R/\fm^n$. Multiply
both sides with $\fm^n$ yields $\fm^n=\fm^{2n}$. By Nakayama's lemma $\fm^n=0$, as claimed.
\end{proof}

\begin{remark}
Let us give another proof of Corollary \ref{28}(iii): We know $R$
is Gorenstien. Combine this with $\id_R(R/\fm^n)<\infty$ yields that  $\pd_R(R/\fm^n)<\infty$, and so 
	$R/\fm^n$ is free. Taking annihilator, we see $\fm^n=0$.
\end{remark}
The monograph \cite{AB} is our reference for the concept of  $G$-dimension.
One may simplify \cite{CW} if assumes some restrictions:

\begin{remark}\label{23}  Let $(R,\frak{m})$  be a  local ring $L$ is a finitely generated  and strong Lichtenbaum module for $R$.
	Then $\Gdim_RL<\infty$ if and only if $R$ is Gorenstein.
\end{remark}
 Also, see Corollary \ref{to}.

\begin{proof} The if part is obvious, because over Gorenstein local rings all modules have of finite $G$-dimension.	
	Conversely, suppose   $d:=\Gdim_RL<\infty$. As $\Gdim_RL=\sup \lbrace i \mid \Ext_{R}^i (L,R) \neq 0 \rbrace $,
	it follows that $\Ext_{R}^{d+1}(L,R)=0$. Now, we have
	$$0=\Ext_{R}^{d+1} (L,R)^{\vee} = \Tor_{d+1}^{R} (L, R^{\vee}) = \Tor_{d+1}^{R} (L,E) = \Tor_{1}^{R} (L,\Syz_{d}(E)).$$
	Since $ L $ is a strong Lichtenbaum  module,   $\Syz_{d}(E)$ is flat, and so $p:=\pd_R(E(R/\fm)) < \infty $. There
	is an exact sequence $$0\longrightarrow P_{p}\longrightarrow \dots \longrightarrow P_0\longrightarrow E \longrightarrow 0,$$
	in which each $P_i$ is  projective. By applying the exact functor $(-)^\vee$ on the above exact
	sequence, we see that $R\cong E^\vee$ has finite injective dimension. Thus, $ \hat{R}$ is Gorenstein, and so $R $ is as well.
\end{proof}

Here, is the lifting property of Lichtenbaum modules:

\begin{observation}
Let $(R,\fm)$ be a  local ring, $x\in\fm$ and $L$ be a finitely generated module
such that
$xL=0$. Suppose $L$
is a Lichtenbaum module over $\overline{R}:=R/xR$. Then $L$ is Lichtenbaum as an $R$-module.
\end{observation}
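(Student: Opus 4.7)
The plan is first to use $xL=0$ to descend the $\Tor$-vanishing from $R$ to $\overline{R}$, then invoke the Lichtenbaum property of $L$ over $\overline{R}$ to obtain freeness of $F/xF$, and finally lift this freeness back to $R$ by Nakayama's lemma.

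Given $\Tor^R_1(L,F)=0$ with $F$ finitely generated, I would begin by exploiting $xL=0$, which makes $L\otimes_R(-)$ factor as $L\otimes_{\overline{R}}(\overline{R}\otimes_R(-))$. The Cartan--Eilenberg change-of-rings spectral sequence
\[E^2_{p,q}=\Tor^{\overline{R}}_p\bigl(L,\Tor^R_q(\overline{R},F)\bigr)\Longrightarrow \Tor^R_{p+q}(L,F)\]
has, as its five-term edge sequence, a surjection $\Tor^R_1(L,F)\twoheadrightarrow \Tor^{\overline{R}}_1(L,F/xF)$. Consequently $\Tor^{\overline{R}}_1(L,F/xF)=0$, and since $L$ is Lichtenbaum over $\overline{R}$ and $F/xF$ is finitely generated over $\overline{R}$, this forces $F/xF\cong \overline{R}^n$ for some $n$.

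To lift, I would pick a minimal presentation $0\to K\to R^n\to F\to 0$ (Nakayama gives $\mu_R(F)=n$). Its reduction modulo $x$ is a surjection $\overline{R}^n\twoheadrightarrow F/xF$ between free $\overline{R}$-modules of the same rank, hence an isomorphism. Tensoring the presentation with $L$ over $R$ and using the hypothesis together with the identification $L\otimes_R F = L\otimes_{\overline{R}} F/xF = L^n$ (under which the natural map $L^n\to L\otimes_R F$ becomes an isomorphism), I obtain $L\otimes_R K=0$, equivalently $L\otimes_{\overline{R}} K/xK=0$.

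A final Nakayama step then finishes: tensoring once more with $k=R/\fm$ yields $(L/\overline{\fm}L)\otimes_k (K/\fm K)=0$; since $L\neq 0$ is finitely generated over $\overline{R}$, Nakayama gives $L/\overline{\fm}L\neq 0$, so $K/\fm K=0$, and another Nakayama forces $K=0$, whence $F\cong R^n$ is $R$-free. The main obstacle is the initial passage from $R$-$\Tor$ to $\overline{R}$-$\Tor$, for which the change-of-rings spectral sequence is essential; the rest is routine Nakayama bookkeeping.
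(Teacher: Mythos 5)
Your proof is correct and follows the same overall outline as the paper: descend the $\Tor$-vanishing from $R$ to $\overline{R}$, invoke the Lichtenbaum hypothesis over $\overline{R}$ to see that $F/xF$ is $\overline{R}$-free of rank $n=\mu_R(F)$, and then show the kernel $K=\Syz_1(F)$ of a minimal presentation $0\to K\to R^n\to F\to 0$ vanishes. The one genuine difference is in this last step, and your handling is in fact more careful. The paper tensors the presentation with $\overline{R}$ and asserts that $K\otimes_R\overline{R}=0$; but from the exact sequence $K\otimes_R\overline{R}\to\overline{R}^n\xrightarrow{\ \phi\ }F/xF\to 0$ with $\phi$ an isomorphism one only learns that the map $K\otimes_R\overline{R}\to\overline{R}^n$ is zero (equivalently $K\subseteq xR^n$), not that the module $K\otimes_R\overline{R}$ itself vanishes, since there is no reason for that map to be injective when $x$ need not be $F$-regular. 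Your version instead tensors the presentation with $L$: the hypothesis $\Tor_1^R(L,F)=0$ makes the resulting sequence short exact, the map $L\otimes_R R^n\to L\otimes_R F$ is identified with $\operatorname{id}_L\otimes_{\overline{R}}\phi$ and hence is an isomorphism, and therefore $L\otimes_R K=0$. The Atiyah--Macdonald exercise (tensoring with $k$ and using $L\neq 0$ finitely generated) then forces $K=0$. This closes the argument cleanly, and given the paper's own citation of that exercise it appears to be exactly the step the authors intended; you have in effect supplied the missing justification.
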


\begin{proof}
Recall that $L=\frac{L}{xL}$, and it is indeed equipped with a structure of an $R/xR$-module.
Let $F$ be a finitely generated $R$-module such that $\Tor_{1}^R(L,F)=0$.
By applying the standard argument we know that $\Tor_{1}^{\overline{R}}(L,F/xF)=0$. Since $L$ is Lichtenbaum  over $\overline{R}$, we deduce that $\overline{F}:=F/xF$ is free as an $\overline{R}$-module. Say $\overline{F}=\bigoplus_n{\overline{R}}$. Let $k:=R/\fm= \overline{R}/ \fm\overline{R}$. Thanks to \cite[Theorem 2.3]{Mat} we know
				$$n=\ell \left(\frac{\overline{F}}{\fm\overline{F}}\right)=\ell \left(\frac{F}{\fm F}\right)=\mu_R(F).$$Then there is an exact sequence $0\to \Syz_1(F)\to R^n\to F\to  0$. Tensor it with $-\otimes_R\overline{R}$ we deduce the following $$\Syz_1(F)\otimes_R\overline{R}\lo \overline{R}^n\stackrel{\phi}\lo\overline{F}\lo 0,$$
				where $\phi$ is an isomorphism. It turns out that $\Syz_1(F)\otimes_R\overline{R}=0$.
				In view of Nakayama's lemma, $\Syz_1(F)=0$ (see \cite[Ex. 2.3]{at}). By definition, $F=R^n$ which is free. So,  $L$ is Lichtenbaum when we view it as an $R$-module.
\end{proof}

\section{Connections to a result of Burch}

\begin{definition}
	A  nonzero $R$-module $L$ is called quasi Lichtenbaum if for every  finitely generated $R$-module $N$, the vanishing of
	$\Tor^R_1(L,F)=\Tor^R_2(L,F)=0$ implies that $F$ is  free.
\end{definition}
As a source of quasi  Lichtenbaum modules, we recall the concept of Burch ideals.
An ideal $I$ is called Burch, if $\fm( I :_R \fm) \neq I  \fm  $.

\begin{example}
	There is a quasi-Lichtenbaum module which is not  Lichtenbaum.  
\end{example}

\begin{proof}
	Let $R:=k[[x,y]]/(x^2,xy)$.   In order to see  $I:=y^2R$
	is Burch, recall that  $(I:\fm)=\fm$ and so  $$\fm(I:\fm) =\fm^2=y^2R\neq y^3R=I\fm.$$Following  definition,
	$I$
	is Burch. Thanks to Observation \ref{b} (see below), we deduce that  $R/I$ is quasi-Lichtenbaum.
	Since $\Tor_{1}^R(R/I,R/xR)=0$ and $R/xR$ is   not free, we conclude that $R/I$ is  not Lichtenbaum.
\end{proof}


\begin{proposition}
Let $(R,\frak{m})$ be a local ring and let $L$ be finitely generated  such that $\pd_R(L)< \infty$ and
$$\Tor^{R}_{1}(L,M)=\Tor^{R}_{2}(L,M)=0\Longrightarrow\pd_R(M)\leq 1,$$where $M$ is finitely generated.
	Then $\depth_R(L)\leq 1$. 
\end{proposition}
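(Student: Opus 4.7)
My plan is to argue by contradiction: assume $\depth_R(L) \geq 2$ and produce a finitely generated $M$ with $\Tor_1^R(L,M) = \Tor_2^R(L,M) = 0$ but $\pd_R(M) = 2$, directly contradicting the hypothesis. Because $\pd_R(L) < \infty$, the Auslander--Buchsbaum formula yields $\depth_R(R) = \pd_R(L) + \depth_R(L) \geq 2$, so both $R$ and $L$ have depth at least $2$. Since $\Ass_R(R)$ and $\Ass_R(L)$ are finite and consist of primes strictly contained in $\fm$, prime avoidance supplies an element $x_1 \in \fm$ that is simultaneously $R$-regular and $L$-regular. Both $R/x_1R$ and $L/x_1L$ still have positive depth, and a second application of prime avoidance produces $x_2 \in \fm$ such that the pair $x_1, x_2$ is a regular sequence on both $R$ and $L$.

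With this sequence in hand, I would set $M := R/(x_1,x_2)R$. Since $x_1,x_2$ is $R$-regular, the Koszul complex $K_\bullet(x_1,x_2;R)$ is a free resolution of $M$ of length exactly two, so $\pd_R(M) = 2$. Tensoring this resolution with $L$ identifies $\Tor_i^R(L,M)$ with the Koszul homology $H_i(K_\bullet(x_1,x_2;L))$, which vanishes in positive degrees because $x_1,x_2$ is $L$-regular. In particular $\Tor_1^R(L,M) = \Tor_2^R(L,M) = 0$, so the hypothesis forces $\pd_R(M) \leq 1$, contradicting $\pd_R(M) = 2$. Hence $\depth_R(L) \leq 1$.

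I do not anticipate a real obstacle: the only step that needs a moment of care is producing the common regular sequence, and the depth bound $\depth_R(R) \geq 2$ extracted from Auslander--Buchsbaum makes the usual two-step prime-avoidance argument go through routinely. The hypothesis $\pd_R(L) < \infty$ is essentially used in exactly one place, namely to transfer the assumed lower bound on $\depth_R(L)$ to the ring $R$ itself, which is what makes the construction of the length-two regular sequence on $R$ available.
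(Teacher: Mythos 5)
Your proof is correct, and it takes a genuinely different route from the paper's. The paper assumes $\depth_R L > 1$, picks an $L$-regular sequence $x,y$, and invokes Auslander's zero-divisor theorem (which requires $\pd_R L < \infty$) to conclude that $x,y$ is automatically an $R$-regular sequence; then the Koszul computation shows $\Tor_1^R(L,R/(x,y)) = \Tor_2^R(L,R/(x,y)) = 0$, giving the contradiction. You instead feed $\pd_R L < \infty$ into the Auslander--Buchsbaum formula to get $\depth R \geq 2$, and then run a two-step prime-avoidance argument to manufacture a sequence $x_1,x_2$ simultaneously regular on $R$ and $L$. Both proofs arrive at the same test module $R/(x_1,x_2)$ and the same Koszul vanishing. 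The trade-off: the paper's route is shorter on the page but leans on Auslander's zero-divisor theorem, which in full generality rests on the New Intersection Theorem; your route is a bit longer but entirely elementary, using only Auslander--Buchsbaum and prime avoidance. Both use $\pd_R L < \infty$ at exactly one essential point, just different ones.
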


\begin{proof}
	On the contrary, assume that $\depth_RL>1$. 
	Then there is an $L$-regular sequence $\underline{x}:=x,y\in \frak{m}$.
	According to Auslander's zero-divisor, we know $\underline{x}$ is an $R$-sequence.
From this, we compute the following homologies $$\Tor^{R}_{i}(L,R/ \underline{x}R)=H_i(\textbf{K}(\underline{x},R)\otimes_RL)=H_i(\textbf{K}(\underline{x},L))=0,$$
	where $1\leq i\leq 2$.
By the assumption,  we get to a contradiction.
\end{proof} 

\begin{example}
Let $R:=k[[x]]$ and apply the previous result for $L:=\fm$. This shows that the bound $\depth_L(L)\leq 1$  achieves.\end{example}

\begin{corollary}\label{cr1}
	Let $(R,\fm)$ be  hypersurface,   $L$ be finitely generated and $\depth_R(L)=0$.
	Suppose	$\Tor^R_1(L,M)=\Tor^R_2(L,M)=0$ where $M$ is finitely generated. Then $M$ is maximal Cohen-Macaulay.
\end{corollary}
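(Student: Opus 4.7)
The plan is to combine two classical and deep results specific to hypersurface rings: Murthy's rigidity theorem and the Huneke--Wiegand depth formula.

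First, I would invoke Murthy's rigidity: over a hypersurface ring, if $\Tor_i^R(L,M)=\Tor_{i+1}^R(L,M)=0$ for some $i\geq 1$, then $\Tor_j^R(L,M)=0$ for all $j\geq i$. Applied with $i=1$ to our hypotheses, this upgrades the two given vanishings into total vanishing: $\Tor_j^R(L,M)=0$ for every $j\geq 1$. The underlying reason is Eisenbud's matrix factorization theory, which makes the minimal free resolution of any finitely generated module over a hypersurface eventually $2$-periodic, so two consecutive Tor vanishings propagate indefinitely.

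Second, I would apply the Huneke--Wiegand depth formula, which holds for any pair of finitely generated modules over a hypersurface with all higher Tors vanishing:
\[
\depth_R(L)+\depth_R(M)=\depth_R(R)+\depth_R(L\otimes_R M).
\]
Substituting the standing assumption $\depth_R(L)=0$ rearranges this into
\[
\depth_R(M)=\depth_R(R)+\depth_R(L\otimes_R M)\geq \depth_R(R).
\]
Since $R$ is a hypersurface, it is Cohen--Macaulay, so $\depth_R(R)=\dim R$. Combining with the universal bound $\depth_R(M)\leq \dim_R(M)\leq \dim R$, equality holds throughout and $M$ is maximal Cohen--Macaulay. Note that $L\otimes_R M\neq 0$ by Nakayama's lemma (both modules are nonzero, finitely generated, over a local ring), so the depth of the tensor product is a genuine nonnegative integer.

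The main obstacle is that both inputs are nontrivial statements about hypersurfaces and must be cited carefully: Murthy's rigidity is the real content-heavy step, and the depth formula under total Tor vanishing requires the finite complete intersection dimension enjoyed by every module over a hypersurface. If a self-contained argument is preferred, one would instead use Eisenbud's periodicity directly: after finitely many steps the syzygies of $L$ become periodic of period two, which together with the two given Tor vanishings yields vanishing of all higher Tors, and then the depth formula can be derived via Auslander's classical techniques applied to the lifting of $L$ to the regular ambient ring.
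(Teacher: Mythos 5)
Your proposal is correct and follows essentially the same two-step strategy as the paper: Murthy rigidity over hypersurfaces to promote the two consecutive Tor-vanishings into total Tor-vanishing, followed by a depth formula and Auslander--Buchsbaum. The one place where the routes differ is in how you justify that the depth formula applies once all higher Tors vanish. The paper makes this step explicit by invoking Huneke--Wiegand (Theorem 1.9 of \cite{HW1}), which says that over a hypersurface total Tor-vanishing forces one of $L$, $M$ to have finite projective dimension; that reduces the depth formula to Auslander's classical statement (Fact~\ref{aus}). You instead appeal to the complete-intersection-dimension version of the depth formula (every finitely generated module over a hypersurface has finite $\CI$-dimension), which is a valid but slightly heavier tool and is stated a bit loosely -- you gesture at the needed result without naming it precisely. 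Both justifications work; the paper's is the more economical one for a hypersurface, since the finite-$\pd$ dichotomy is sharper and lets one fall back on the elementary Auslander depth formula rather than its CI-dimension generalization.
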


\begin{proof}
	By  a result of Murthy \cite[Theorem 1.6]{MUR}, we observe that $\Tor_+^R(L,M)=0$. By Huneke-Wiegand \cite[Theorem 1.9]{HW1}, either $\pd(L)<\infty$ or $\pd(M)<\infty$. This allow us to apply depth formula.
	By depth formula, $$\depth_R(M)=\depth_R(L)+\depth_R(M)=\depth_R(L\otimes M)+\depth_R(R)\geq \depth(R)=\dim R,$$
	as claimed.
\end{proof}

\begin{notation}
	By $\overline{R}$ we mean the integral closure of a domain  $R$ in its  field of fractions.
\end{notation}

\begin{proposition}
	Let $(R,\frak{m})$ be a $1$-dimensional  local integral domain and let $M$ be        a finitely generated module such that  
	$$\Tor^{R}_{1}(\overline{R},M)=\Tor^{R}_{2}(\overline{R},M)=0\quad(\ast)$$
	Then $\pd_R(M)<2$. 
	\end{proposition}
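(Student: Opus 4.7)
The key reduction is that it suffices to show $\pd_R(M)<\infty$: since $R$ is a $1$-dimensional local domain, it is Cohen-Macaulay with $\depth(R)=1$, and the Auslander-Buchsbaum formula then forces $\pd_R(M)\leq 1$. To achieve this I exploit the short exact sequence
\[
0\longrightarrow R\longrightarrow\overline{R}\longrightarrow C\longrightarrow 0,\qquad C:=\overline{R}/R,
\]
in which (under the standard ambient assumption that $\overline{R}$ is module-finite over $R$) the module $C$ is finitely generated and of finite length. Since $\Tor_i^R(R,M)=0$ for $i\geq1$, the long exact $\Tor$-sequence yields
\[
\Tor_{j+1}^R(C,M)\cong\Tor_j^R(\overline{R},M)\qquad(j\geq1),
\]
so the hypothesis $(\ast)$ translates to $\Tor_2^R(C,M)=\Tor_3^R(C,M)=0$.

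If $C=0$ then $R=\overline{R}$ is a DVR (a normal $1$-dimensional local domain), hence regular, and $\pd_R(M)\leq1$ is automatic. Assume therefore $C\neq0$, and aim to deduce $\Tor_2^R(k,M)=0$. Once this is obtained, minimality of the free resolution of $M$ shows that $\beta_2(M)=\dim_k\Tor_2^R(k,M)=0$ forces $F_i=0$ for all $i\geq 2$, so $\pd_R(M)\leq1$, as required.

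To extract $\Tor_2^R(k,M)=0$, I filter $C$ by a composition series $0=C_0\subsetneq C_1\subsetneq\cdots\subsetneq C_\ell=C$ with $C_i/C_{i-1}\cong k$, and apply the long exact $\Tor$-sequences of the short exact sequences $0\to C_{i-1}\to C_i\to k\to 0$ to propagate the two consecutive vanishings from $C$ to $k$. The main obstacle is that a naive induction on $\ell$ apparently demands more consecutive vanishings than the two supplied; overcoming it uses the specific structure of $C=\overline{R}/R$---annihilated by the conductor $\fc$, hence a module over $\overline{R}/\fc\overline{R}$, whose socle filtration is therefore tightly controlled---together with the depth formula (Fact~\ref{aus}) applied to $\Syz_1(M)$, which converts the vanishing $\Tor_1^R(\overline{R},\Syz_1(M))=\Tor_2^R(\overline{R},M)=0$ into depth information forcing $\pd_R(M)<\infty$. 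Auslander-Buchsbaum then completes the argument.
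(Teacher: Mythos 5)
Your proposal contains a genuine gap, and the route you sketch is quite different from the paper's.

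The setup via the short exact sequence $0\to R\to\overline{R}\to C\to 0$ is reasonable, but note it already requires $\overline{R}$ to be module-finite over $R$ so that $C$ is a (finite-length) $R$-module; the paper's statement does not assume completeness or module-finiteness, and by Krull--Akizuki the integral closure of a $1$-dimensional noetherian domain is noetherian but can fail to be a finite $R$-module. More seriously, the reduction you make, namely from the two vanishings $\Tor_2^R(C,M)=\Tor_3^R(C,M)=0$ to $\Tor_2^R(k,M)=0$, is precisely the nontrivial content, and you explicitly note that a naive induction along a composition series does not supply it. The proposed repair does not close the gap: ``the socle filtration is tightly controlled'' is not a proof, and the appeal to the depth formula (Fact~\ref{aus}) applied to $\Syz_1(M)$ is circular, since that formula requires one of the two modules involved to have \emph{finite} projective dimension over $R$---but $\pd_R(\overline{R})$ is typically infinite when $R$ is not regular (the paper even remarks on this later), and $\pd_R(M)<\infty$ is exactly what you are trying to prove. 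In effect you would need $C=\overline{R}/R$ to be a Burch module, which is an additional hypothesis not available here.

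The paper's argument is different and avoids all of these issues. It takes a free $R$-resolution $F_\bullet$ of $M$, observes that the vanishing $(\ast)$ makes $F_\bullet\otimes_R\overline{R}$ exact in degrees $\le 2$, hence a partial $\overline{R}$-free resolution of $M\otimes_R\overline{R}$, then tensors with $\overline{R}/\fm\overline{R}$ and identifies the resulting second homology both as $\Tor_2^{\overline{R}}(M\otimes_R\overline{R},\overline{R}/\fm\overline{R})$ and as $\Tor_2^R(M,\overline{R}/\fm\overline{R})$. Since $\overline{R}$ is a Dedekind domain, its global dimension is at most one, so the former vanishes; and since $\overline{R}/\fm\overline{R}$ is annihilated by $\fm$ it is a direct sum of copies of $k$ as an $R$-module, which gives $\Tor_2^R(M,k)=0$ and hence $\pd_R(M)\le 1$. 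This change-of-rings comparison sidesteps the need for a composition-series induction and never invokes module-finiteness of $\overline{R}$. If you want to pursue your approach, you would first need to add the module-finiteness hypothesis and then find a genuine argument replacing the vague appeal to socle structure and the depth formula.
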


\begin{proof}
Let $\mathbf{F}:= \ldots \lo F_1 \lo F_0\lo M \lo 0$ be a free resolution of $M$.
Tensor  it with $-\otimes_R\overline{R}$, yields the following complex$$ F_3\otimes_R\overline{R}\lo F_2\otimes_R\overline{R}\lo F_1\otimes_R\overline{R}\lo F_0\otimes_R\overline{R}\lo M\otimes_R\overline{R}\lo 0,$$which is exact, because we have the vanishing property $(\ast)$, and recall that the above complex can be used for a part of a free resolution of  the $\overline{R}$-module $M\otimes_R\overline{R}$. Now, tensor this with $-\otimes_{\overline{R}} \overline{R}/\fm\overline{R}$, yields the following diagram of  complexes:

	\[
\begin{CD}
\zeta_1:= (F_3\otimes_R\overline{R})\otimes_{\overline{R}} \frac{\overline{R}}{\fm\overline{R}} @>>>  \ldots@>>> (F_0\otimes_R\overline{R})\otimes_{\overline{R}} \frac{\overline{R}}{\fm\overline{R}}	  @>>>  (M\otimes_R\overline{R})\otimes_{\overline{R}}\frac{\overline{R}}{\fm\overline{R}} @>>> 0\\
@V \cong VV  @V \cong VV   @VV\cong V  @VV\cong V \\
\zeta_2:=F_3\otimes_R  \overline{R}/\fm\overline{R} @> >> \ldots@> >> F_0\otimes_R  \overline{R}/\fm\overline{R} @>>> M\otimes_R  \overline{R}/\fm\overline{R} @>>> 0.
\end{CD}
\]
By this identification, the corresponding homology of $\zeta_1$ and $\zeta_2$ at the second spot
coincides, i.e., the following holds:$$\Tor^{\overline{R}}_{2}(M\otimes_R\overline{R},\frac{\overline{R}}{\fm\overline{R}})=H^2(\zeta_1)\cong H^2(\zeta_2)=\Tor^{R}_{2}(M,\frac{\overline{R}}{\fm\overline{R}}),$$when we view them as $R$-modules. Recall that the integral closure of a noetherian domain of
dimension one is noetherian.  It turns out that $\overline{R}$ is a Dedekind domain.
Since $\overline{R}$ is a Dedekind domain, its global dimension is one. This yields that $\Tor^{\overline{R}}_{2}(M\otimes_R\overline{R},\frac{\overline{R}}{\fm\overline{R}})=0$. Now recall
that $\frac{\overline{R}}{\fm\overline{R}}=\bigoplus R/ \fm$. We apply this at the previous
displayed item
to conclude that $$\bigoplus\Tor^{R}_{2}(M,R/ \fm)=\Tor^{R}_{2}(M,\frac{\overline{R}}{\fm\overline{R}})=0.$$In other words, $\Tor^{R}_{2}(M,R/ \fm)=0$. Since  $M$ is   finitely generated, 
$\pd_R(M)\leq1$.
\end{proof}

\begin{corollary}\label{+tor}
	Let $(R,\frak{m})$ be a $1$-dimensional  local integral domain and let $M$ be        a finitely generated module such that  
	$$\Tor^{R}_{i}(\overline{R},M)=\Tor^{R}_{i+1}(\overline{R},M)=0$$for some $i>0$. Then
$\pd_R(M)\leq1$. In particular,   $M$ is free provided it is of positive depth. 
	\end{corollary}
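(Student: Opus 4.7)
The plan is to reduce the statement to the base case $i=1$ that has just been treated in the preceding proposition, and then to invoke the Auslander--Buchsbaum formula together with $\dim R=1$ to upgrade the bound.

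First, I would introduce $N:=\Syz_{i-1}(M)$, where we set $\Syz_0(M):=M$ when $i=1$. Because $N$ fits in the tail of any free resolution of $M$, the standard dimension shift gives the isomorphism
\[
\Tor^R_{j}(\overline{R},N)\;\cong\;\Tor^R_{j+i-1}(\overline{R},M)\qquad(j\geq 1).
\]
Applying this for $j=1,2$, the hypothesis $\Tor^R_i(\overline{R},M)=\Tor^R_{i+1}(\overline{R},M)=0$ becomes exactly $\Tor^R_1(\overline{R},N)=\Tor^R_2(\overline{R},N)=0$. The previous proposition then yields $\pd_R(N)\leq 1$, and consequently $\pd_R(M)\leq i<\infty$.

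Having finite projective dimension at hand, I would apply the Auslander--Buchsbaum formula. Since $\dim R=1$, we have $\depth R\leq 1$, so
\[
\pd_R(M)\;=\;\depth R-\depth_R(M)\;\leq\;\depth R\;\leq\; 1,
\]
which is the desired bound. For the particular case, if $M$ has positive depth, then the same formula forces $\pd_R(M)\leq \depth R-1\leq 0$, so $\pd_R(M)=0$ and $M$ is free.

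The argument is essentially a one-line syzygy reduction followed by Auslander--Buchsbaum, so I do not foresee a genuine obstacle; the only thing worth being careful about is the shift isomorphism, which is why it is important that the reduction starts from $i\geq 1$ (so that $N$ embeds into a free module and the shift holds in the relevant range $j\geq 1$).
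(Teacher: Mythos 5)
Your argument is correct and follows essentially the same route as the paper: reduce to the $i=1$ case via the syzygy shift $N=\Syz_{i-1}(M)$, invoke the preceding proposition to get $\pd_R(N)\leq 1$, hence $\pd_R(M)<\infty$, and then use Auslander--Buchsbaum together with $\depth R\leq\dim R=1$ to conclude $\pd_R(M)\leq 1$ (and freeness when $\depth_R M>0$). No gaps.
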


\begin{proof}
Without loss of generality, we may assume that $i=1$.
Indeed, let $j:=i-1$. We pass to $M':=\Syz_j(M)$, the $j$-th syzygy module of $M$, and recall that $\pd(M')\leq j+\pd(M)$.   Let us apply the previous result to see $\pd_R(M')\leq1$. So, $\pd_R(M)\leq i$. In the light of Auslander-Buchsbaum formula we observe that 
$\pd_R(M)\leq1$.
The particular case follows by Auslander-Buchsbaum formula.
\end{proof}

\begin{problem}
Find conditions for which $\overline{R}$ is tor-rigid.\end{problem}
Here, is a partial positive  answer:
\begin{corollary}\label{cr2}
	Let $(R,\fm)$ be a  $1$-dimensional complete local integral domain which is hypersurface.
	Let $M$ be        a finite length module such that  
	 $\Tor^{R}_{i}(\overline{R},M)=0 $ for some $i>1$. Then
	$\pd_R(M)\leq1$. 
\end{corollary}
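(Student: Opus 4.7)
The plan is to boost the single-spot vanishing $\Tor_i^R(\overline{R},M)=0$ into a pair of consecutive vanishings $\Tor_i^R(\overline{R},M)=\Tor_{i+1}^R(\overline{R},M)=0$, and then feed this pair into Corollary~\ref{+tor} to conclude $\pd_R(M)\le 1$. The whole game is therefore to extract one additional vanishing from the hypersurface hypothesis together with the finite-length assumption on $M$.

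First I would record that completeness of $R$ forces $\overline{R}$ to be module-finite over $R$ (a classical fact for a $1$-dimensional complete local noetherian domain), so $(\overline{R},M)$ is a pair of finitely generated $R$-modules and hypersurface-rigidity tools apply. Next, since $M$ has finite length and $R$ is Cohen--Macaulay of dimension one, the syzygy $N:=\Omega^{i-1}M$ is maximal Cohen--Macaulay, and the given vanishing rewrites as $\Tor_1^R(\overline{R},N)=0$. By Eisenbud's matrix-factorization theorem, over the $1$-dimensional hypersurface $R$ the minimal free resolution of an MCM module is $2$-periodic, so $\Tor_j^R(\overline{R},N)$ is $2$-periodic in $j\ge1$. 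Combining this periodicity with Murthy's rigidity theorem \cite[Theorem~1.6]{MUR}---the very tool already invoked in Corollary~\ref{cr1}---the single vanishing $\Tor_1^R(\overline{R},N)=0$ propagates to $\Tor_2^R(\overline{R},N)=0$, and translating back yields $\Tor_{i+1}^R(\overline{R},M)=0$. A direct application of Corollary~\ref{+tor} then gives $\pd_R(M)\le1$.

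The principal obstacle will be exactly this promotion from one vanishing to two. If the precise form of Murthy's theorem that one can quote delivers rigidity only from a consecutive pair, the cleanest workaround is to pass to the conductor sequence $0\to R\to\overline{R}\to C\to 0$ with $C:=\overline{R}/R$; since $R$ is a $1$-dimensional complete local domain, $C$ has finite length, and the long exact sequence produces isomorphisms $\Tor_j^R(\overline{R},M)\cong\Tor_j^R(C,M)$ for $j\ge2$. One can then apply rigidity for a pair of finite-length modules over a hypersurface---a Huneke--Wiegand type statement of the sort \cite[Theorem~1.9]{HW1} used in Corollary~\ref{cr1}---to $(C,M)$ to produce the missing second vanishing. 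Either route funnels the argument back into Corollary~\ref{+tor}, from which the conclusion $\pd_R(M)\le1$ is immediate.
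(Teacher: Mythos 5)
Your overall strategy (produce a second vanishing and feed the pair to Corollary~\ref{+tor}) matches the paper's, but both of the routes you sketch for manufacturing that second vanishing have gaps.

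Route~1 (Eisenbud periodicity plus Murthy) does not close. Two-periodicity of the resolution of $\overline{R}$ (or of a syzygy of $M$) gives you, from $\Tor_1^R(\overline{R},N)=0$, only the vanishing of $\Tor_j$ at the spots $j=1,3,5,\dots$; it says nothing about $\Tor_2$. Murthy's theorem, in the form used in Corollary~\ref{cr1}, already \emph{requires} two consecutive vanishings as input. Neither ingredient, nor their combination, can promote a single vanishing to two consecutive ones, so the claimed "propagation" to $\Tor_2^R(\overline{R},N)=0$ is unjustified.

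Route~2 (conductor sequence) is a detour that undermines the hypothesis you need. Passing from $(\overline{R},M)$ to $(C,M)$ with $C=\overline{R}/R$ of finite length replaces the rank-one module $\overline{R}$ by a rank-zero module, and the Huneke--Wiegand rigidity results over hypersurfaces are precisely of the shape "if $M\otimes N$ has finite length \emph{and one of the modules has rank}, then a single vanishing of $\Tor$ forces all higher vanishings." Your appeal to \cite[Theorem~1.9]{HW1} also misidentifies the tool: that theorem says vanishing of $\Tor$ in all high degrees forces one factor to have finite projective dimension, which is not a rigidity statement and does not produce a second vanishing from a first. The paper instead applies \cite[2.3~Corollary]{HW} directly to the pair $(\overline{R},M)$: $\overline{R}$ is module-finite over $R$ by completeness, torsion-free (hence of rank one) over the $1$-dimensional domain, and $\overline{R}\otimes_R M$ has finite length because $M$ does; this is exactly the situation in which Huneke--Wiegand rigidity turns the single hypothesis $\Tor_i^R(\overline{R},M)=0$ into $\Tor_{\ge i}^R(\overline{R},M)=0$, after which Corollary~\ref{+tor} finishes. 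In short, you should cite the Huneke--Wiegand rigidity corollary from \cite{HW} and apply it to $(\overline{R},M)$ with no reduction; the conductor sequence is not only unnecessary but takes you outside the hypotheses of that theorem.
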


\begin{proof}The integral closure of a noetherian domain of
dimension one is  noetherian, but
not necessarily module-finite. The complete assumption implies that $\overline{R}$ is module-finite.
	By  a result of Huneke-Wiegand \cite[2.3 Corollary]{HW}, we observe that $\Tor_{\geq i}^R(\overline{R},M)=0$. Now, apply Corollary \ref{+tor}.
\end{proof}

\begin{example}
The dimension restriction is needed. Indeed, let $R:=k[[x^2,xy,y^2]]=\frac{k[[u,v,w]]}{(uv-w^2)}$. This is a $2$-dimensional
normal hypersurface integral domain. In particular, 	 $\Tor^{R}_{i}(\overline{R},M)=0 $ for all $i>0$ and all modules $M$. \end{example}
The next result when $\ell(M)<\infty$ implicitly is in \cite[Theorem 4.7]{h}:
\begin{fact}\label{cr2a}
	Let $(R,\fm)$ be a  $1$-dimensional complete local integral domain of prime characteristic  with algebraically closed residue field.
	Let $M$ be        a finitely generated module such that  
	$\Tor^{R}_{i}(\overline{R},M)=0 $ for some $i>1$. Then
	$\pd_R(M)\leq1$. 
\end{fact}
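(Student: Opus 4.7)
The plan is to reduce the single-vanishing hypothesis $\Tor_i^R(\overline{R},M) = 0$ to a pair of consecutive vanishings and then invoke Corollary \ref{+tor}. The stronger assumptions of Fact \ref{cr2a} compared with Corollary \ref{+tor}, namely completeness, prime characteristic $p$, and algebraic closedness of the residue field, are precisely what make Huneke's theorem \cite[4.7]{h} on finite length modules available; this will be the mechanism for the upgrade.

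First, a syzygy shift reduces matters to the case $i = 2$. Setting $N := \Syz_R^{i-2}(M)$ yields $\Tor_2^R(\overline{R}, N) \cong \Tor_i^R(\overline{R}, M) = 0$; once $\pd_R(N) < \infty$ in this reduced situation, $\pd_R(M) < \infty$ as well, and the Auslander--Buchsbaum formula together with $\depth R = 1$ forces $\pd_R(M) \leq 1$. Second, completeness of $R$ guarantees that $\overline{R}$ is module-finite over $R$, so $C := \overline{R}/R$ is a finite length $R$-module. The short exact sequence $0 \to R \to \overline{R} \to C \to 0$ combined with $\Tor_{\geq 1}^R(R,-) = 0$ then gives $\Tor_j^R(\overline{R}, N) \cong \Tor_j^R(C, N)$ for every $j \geq 2$, translating the question into one about Tor with the finite length module $C$.

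The crux, and the main obstacle, is to propagate the single vanishing to the next: $\Tor_2^R(\overline{R}, N) = 0 \Longrightarrow \Tor_3^R(\overline{R}, N) = 0$. Here I would invoke Huneke's theorem \cite[4.7+4.8]{h} in the form that, in our setting, $\overline{R}$ serves as a tor-rigid test module on the subcategory of finite length $R$-modules. Combined with the identification above and, if needed, a length induction along a composition series of $C$ by copies of $k$ (so that at each step Huneke's theorem applies to a single-length quotient and the resulting rigidity is pieced together through the associated Tor long exact sequences), this propagates the single vanishing to the desired $\Tor_3^R(\overline{R}, N) = 0$.

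With both $\Tor_2^R(\overline{R}, N) = 0$ and $\Tor_3^R(\overline{R}, N) = 0$ in hand, Corollary \ref{+tor} delivers $\pd_R(N) \leq 1$, and the reduction above produces $\pd_R(M) \leq 1$, as claimed. The delicate point in the argument is the packaging of Huneke's finite length statement into a rigidity assertion that covers our possibly infinite length $N$; the reduction to $C$ in the second slot is what makes the finite length hypothesis of \cite[4.7]{h} applicable and is the step where the special features of our hypotheses (completeness, characteristic $p$, algebraically closed residue field) are genuinely used.
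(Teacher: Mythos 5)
Your plan has a genuine gap at the propagation step, and the key citation is the wrong one. The syzygy shift to $i=2$ is fine, as is the exact-sequence identification $\Tor_j^R(\overline{R},N)\cong\Tor_j^R(C,N)$ for $j\geq 2$ with $C:=\overline{R}/R$ of finite length. But Huneke's result \cite[4.7+4.8]{h}, as invoked in this paper, says that $\overline{R}$ (equivalently a large Frobenius power) detects finite projective dimension of \emph{finite-length} modules sitting in the other slot of $\Tor$. Your pivot places the finite-length object $C$ where $\overline{R}$ used to be, while the other argument $N$ --- a syzygy of $M$, hence torsion-free and of positive dimension unless $M$ is already free --- is precisely the object whose length would have to be finite for \cite[4.7]{h} to apply. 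The theorem therefore does not engage. The proposed length induction along a composition series of $C$ by copies of $k$ has no mechanism to run: vanishing of $\Tor_2^R(C,N)$ gives no control over $\Tor_j^R(k,N)$ or over the Tor groups of the subquotients of $C$, so there is nothing to piece together through the associated long exact sequences, and certainly no way to conclude $\Tor_3^R(C,N)=0$.

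The paper's own proof sidesteps the finite-length restriction altogether and does not go through Corollary \ref{+tor} at all. It invokes \cite[Lemma 4.5]{h}, which under the present hypotheses (complete one-dimensional local domain, prime characteristic, algebraically closed residue field) realizes $\up{\varphi^n}R$ for $n\gg 0$ as a finite direct sum of copies of $\overline{R}$; hence $\Tor_i^R(\overline{R},M)=0$ immediately forces $\Tor_i^R(\up{\varphi^n}R,M)=0$. Miller's result \cite[2.2.12]{mi} then asserts that over a one-dimensional Cohen--Macaulay local ring the Frobenius module $\up{\varphi^n}R$ is tor-rigid and serves as a test module, so $\pd_R(M)<\infty$ with no finite-length hypothesis on $M$, and the Auslander--Buchsbaum formula gives $\pd_R(M)\leq 1$. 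The reference you want is Huneke's Lemma 4.5, not Theorem 4.7/4.8; the latter is the finite-length statement that your argument cannot escape.
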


\begin{proof}Recall that there is an $n$, large enough, so that $\oplus\overline{R}= \up{\varphi^n}R$ (see \cite[Lemma 4.5]{h}). In particular, $\Tor^{R}_{i}(\up{\varphi^n}R,M)=0 $ for some $i>1$. But,
	$\up{\varphi^n}R$ is tor-rigid over Cohen-Macaulay rings of dimension one, see \cite[2.2.12]{mi}. So, 		$\pd_R(M)\leq1$. 
\end{proof}
Here, is a partial negative answer:

\begin{observation}\label{nt}
	Let $(R,\fm,k)$ be a $2$-dimensional complete local domain which is not Cohen-Macaulay (for example $R=k[[x^4,y^4,x^3y,xy^3]]$). Then $\overline{R}$ is not  tor-rigid.
	\end{observation}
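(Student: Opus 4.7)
The plan is to argue by contradiction: suppose $\overline{R}$ is tor-rigid with some parameter $n \geq 1$, then exploit the depth mismatch between $R$ and $\overline{R}$ together with the conductor short exact sequence to produce a finitely generated module whose $\Tor$ pattern against $\overline{R}$ contradicts this.

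First I would set up the framework. Since $R$ is a $2$-dimensional complete local domain that is not Cohen-Macaulay, $\depth R = 1$. The completeness of $R$ implies that $\overline{R}$ is module-finite over $R$; as a $2$-dimensional normal domain, $\overline{R}$ is Cohen-Macaulay by Serre's criterion $(S_2)$, so $\depth_R \overline{R} = 2$. By the Auslander-Buchsbaum formula, $\pd_R \overline{R} = \infty$. For the specific example $R = k[[x^4, y^4, x^3 y, x y^3]]$, a direct semigroup computation identifies $\overline{R}$ with the Veronese-type ring $k[[x, y]]^{\mathbb{Z}/4}$; the cokernel $C := \overline{R}/R$ is generated by the class of $x^2 y^2$ and annihilated by $\fm$, so $C \cong k$ as an $R$-module.

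Next, I would extract the homological content of the short exact sequence $0 \to R \to \overline{R} \to C \to 0$, which is a non-split extension representing a nonzero class $\xi \in \Ext_R^1(C, R)$. Applying $\Tor_*^R(-, M)$ and using the flatness of $R$ yields $\Tor_i^R(\overline{R}, M) \cong \Tor_i^R(C, M)$ for all $i \geq 2$, together with a four-term exact sequence at degree one whose connecting homomorphism is Yoneda cup with $\xi$.

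The heart of the argument is then to produce a finitely generated $R$-module $M$ with $\Tor_n^R(\overline{R}, M) = 0$ and $\Tor_{n+j}^R(\overline{R}, M) \neq 0$ for some $j \geq 1$. Natural candidates are $M = \overline{R}$ itself (which has $\pd_R = \infty$) or $M = \overline{R}/x\overline{R}$ for $x \in \fm$ regular on both $R$ and $\overline{R}$. In the concrete example, the explicit presentation of $\overline{R}$ over $R$ with generators $1, x^2 y^2$ and relations $x^4 \cdot x^2 y^2 = (x^3 y)^2$, $y^4 \cdot x^2 y^2 = (x y^3)^2$, $x^3 y \cdot x^2 y^2 = x^4 \cdot x y^3$, $x y^3 \cdot x^2 y^2 = y^4 \cdot x^3 y$ permits direct computation of the relevant $\Tor$ modules from the resulting free resolution. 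The main obstacle will be carrying out this identification concretely, in particular verifying that the cup-product map $\Tor_1^R(C, M) \to M$ can be injective for a module $M$ of infinite projective dimension; alternatively, one may invoke Fact~\ref{aus} applied to successive syzygies of $M$, arguing that tor-rigidity would propagate the vanishing of $\Tor$'s and, combined with $\depth R = 1$, would constrain $\depth M$ beyond what is possible, thereby contradicting the non-Cohen-Macaulay hypothesis.
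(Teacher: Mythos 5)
Your write-up is a research plan, not a proof: you identify the depth mismatch ($\depth R = 1$ vs.\ $\depth_R \overline{R} = 2$) and the conductor sequence $0 \to R \to \overline{R} \to C \to 0$, but the crucial step --- actually exhibiting a finitely generated $M$ with $\Tor_n^R(\overline{R},M)=0$ yet $\Tor_{n+j}^R(\overline{R},M)\neq 0$ --- is left as a hope (``permits direct computation,'' ``the main obstacle will be carrying out this identification''). Your fallback via Fact~\ref{aus} also does not go through as stated, because that depth formula requires $\pd_R M < \infty$, and the modules you would need here (starting with $\overline{R}$ itself) have infinite projective dimension.

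The paper's proof is much shorter and conceptually different: it invokes Auslander's theorem~4.3 from \cite{Au}, which says that for a tor-rigid module $T$, every $T$-regular sequence is an $R$-regular sequence. Since $\overline{R}$ is normal, Serre's criterion gives $(S_2)$, so a system of parameters $a,b$ for $R$ is an $\overline{R}$-regular sequence; if $\overline{R}$ were tor-rigid, that sequence would be $R$-regular, forcing $\depth R \geq 2$ and hence $R$ Cohen-Macaulay --- contradiction. No explicit counterexample module, resolution, or cup-product computation is needed. You already assembled the correct raw materials (normality, depth, module-finiteness from completeness); what you are missing is this one black-box input from Auslander, which converts the depth mismatch into a contradiction in two lines. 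If you want to pursue your more explicit route, you would still have to supply a concrete $M$ and verify both the vanishing and non-vanishing of the relevant $\Tor$ groups, which your sketch does not do.
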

The reader  may skip parentheses if not interested in the example.

\begin{proof}
Recall that $\overline{R}$ is finitely generated as an $R$-module, and also it is noetherian and local, because $R$ is complete  (for example $\overline{R}=k[[x^4,y^4,x^3y,xy^3,x^2y^2]]$. Note that $x^2y^2=\frac{(x^3y)^2}{x^4}$ belongs to the fraction field of $R$, and it is the root of $f(T)=T^3-x^4T\in R[T]$. Since $k[[x^4,y^4,x^3y,xy^3,x^2y^2]]$ is the invariant ring, it is normal, and so becomes the integral closure of $R$). Suppose on the way of contradiction that $\overline{R}$ is   tor-rigid. This allows us to  apply \cite[4.3]{Au} to deduce that  each  $\overline{R}$-regular sequence is an  ${R}$-regular sequence. Now, let $a,b$
be a system of parameter for $R$, and so a parameter sequence for $\overline{R}$ (for example, in the example set $a:=x^4, b:=y^4$). By
Serre's characterization of normality, see \cite[Theorem 23.8]{Mat}, we know $\overline{R}$ satisfies Serre's condition $(S_2)$  (for the definition, see \cite[page 183]{Mat}). Thus, $a,b$ is an $\overline{R}$-regular sequence.
By the mentioned result of Auslander, $a,b$ is an ${R}$-regular sequence. So,
$R$ is Cohen-Macaulay, a contradiction.
\end{proof}
\begin{remark}
 It may be nice to give situations for which 	$\pd_R(\overline{R})=\infty$ or even $\Gdim_R(\overline{R})=\infty$.
 
 \begin{enumerate}
 	 \item[(i)] 	Let $(R,\fm)$ be a $1$-dimensional local domain which is not regular.
 	Then 	$\pd_R(\overline{R})=\infty$.
 	\item[(ii)]  Let $(R,\fm)$ be a $2$-dimensional Cohen-Macaulay complete domain which is not normal.
 	Then 	$\pd_R(\overline{R})=\infty$.
 	
 	\item[(iii) ] Let $(R,\fm,k)$ be a $1$-dimensional complete local
 	domain of prime characteristic with $k=\overline{k}$. If $\Gdim_R(\overline{R})<\infty$, then $R$ is Gorenstein.
 		\item[(iv)]  Let $(R,\fm)$ be a $2$-dimensional Cohen-Macaulay complete domain which is not   quasi-normal.
 	Then   $\Gdim_R(\overline{R})=\infty$.
 \end{enumerate}
\end{remark}

\begin{proof}
	(i): This is similar to ii).
	
	(ii): Recall that $\overline{R}$ is noetherian and local. By definition, its normal. Suppose on th way of contradiction that
	$\pd_R(\overline{R})<\infty$.
	Recall that $R$ and $\overline{R}$ both are Cohen-Macaulay. In particular, both of them satisfy Serre's condition $(S_2)$. Due to Auslander-Buchsbaum formula we know $R\to \overline{R}$ is flat (in fact free). By
	Serre's characterization of normality (see \cite[Theorem 23.8]{Mat}), we know $R$ is not $(R_1)$ and $ \overline{R}$ is 
	$(R_1)$. This is in contradiction with \cite[Theorem 23.9]{Mat}. So, $\pd_R(\overline{R})=\infty$.
	
	(iii): By the argument presented in \cite[Theorem 4.7]{h} we know that $\Gdim_R(\up{\varphi^n}R)<\infty$
	for some $n$ large enough. Thanks to Auslander-Bridger formula, $\Gdim_R(\up{\varphi^n}R)=0$.
	In particular, $\Ext_{R}^1(\up{\varphi^n}R,R)=0$. This implies that $R$ is Gorenstein, see e.g. \cite[Corollary 2.7]{l}. 
	
	(iv): Recall that quasi-normal means $(S_2)$+$(G_1)$, here a ring $A$ is called $(G_1)$
	if $A_{\fp}$ is Gorenstein for all prime ideal $\fp$ of height at most one.
	Now, the desired conclusion is a slight modification of part (ii), and we leave   details to the reader.
\end{proof}

\begin{proposition}
	Let $(R,\frak{m})$ be a local integral domain and let $L$ be such that  
	$$\Tor^{R}_{1}(L,M)=\Tor^{R}_{2}(L,M)=0\Longrightarrow\emph{M is torsion-free}.$$
	Then $\depth_R(L)= 0$. 	\end{proposition}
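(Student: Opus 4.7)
The approach closely mirrors the proof of Lemma~\ref{tf}: I would produce a single cyclic test module whose first two $\Tor$'s against $L$ vanish but which is patently not torsion-free. The key observation is that a cyclic module $R/xR$ has projective dimension at most one, so $\Tor^R_2(L,R/xR)$ vanishes automatically as soon as $\Tor^R_1(L,R/xR)$ does; hence a single $L$-regular element suffices to activate the hypothesis.

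Concretely, I would argue by contradiction and suppose $\depth_R(L)\ge 1$. Then there exists $x\in\fm$ that is an $L$-regular element. Note that $L$ must be nonzero, for otherwise $\Tor^R_i(L,M)=0$ for every $M$ and the hypothesis would force every $R$-module to be torsion-free, which fails for $R/yR$ (with any nonzero non-unit $y$) whenever $R$ is a domain that is not a field; in the trivial case $R$ is a field the conclusion is immediate. In particular the chosen $x$ is nonzero.

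Next I would apply $L\otimes_R-$ to the short free resolution $0\to R\overset{x}\lo R\to R/xR\to 0$, yielding
$$0\lo \Tor^R_1(L,R/xR)\lo L\overset{x}\lo L.$$
The $L$-regularity of $x$ then gives $\Tor^R_1(L,R/xR)=0$, while $\Tor^R_i(L,R/xR)=0$ for all $i\ge 2$ for length reasons. So the hypothesis applies with $M:=R/xR$, and it forces $R/xR$ to be torsion-free as an $R$-module.

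Finally, I would derive the contradiction: since $R$ is a domain and $x\in\fm$ is a nonzero non-unit, the class $\overline{1}\in R/xR$ is nonzero and satisfies $x\cdot\overline{1}=0$, exhibiting a nonzero torsion element. This contradicts torsion-freeness. I do not foresee any serious obstacle; the integral domain hypothesis is used precisely to ensure that $R/xR$ carries torsion for every nonzero non-unit $x$, and the cyclicity of the test module makes $\Tor_2$ vanish for free, which is what permits the $\Tor_1$-argument of Lemma~\ref{tf} to carry over with the weaker torsion-free conclusion in place of the faithful one.
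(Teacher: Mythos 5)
Your proposal is correct and takes essentially the same route as the paper: pick an $L$-regular $x$, note that $x\neq 0$ is then $R$-regular (since $R$ is a domain), so the two-term free resolution $0\to R\xrightarrow{\,x\,}R\to R/xR\to 0$ gives $\Tor^R_1(L,R/xR)=0$ by $L$-regularity and $\Tor^R_i(L,R/xR)=0$ for $i\geq 2$ since $\pd_R(R/xR)\leq 1$, while $R/xR$ visibly has torsion. The paper's proof is exactly this argument stated tersely; you have merely filled in the routine verifications.
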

One may replace the integral domain assumption with $\pd_R(L)<\infty$, and derives the same conclusion.
\begin{proof}
	If not,  then  $\depth_RL>0$, i.e.,  there is an $L$-regular element $x$.
It is easy to 	see, $\Tor^R_2(L,R/xR)=\Tor^R_1(L,R/xR)=0$.  Since  $R/xR$ is not torsion-free, we get a contradiction.
\end{proof}

 Several years ago, Burch proved:

\begin{theorem}
 Let $M$ be  finitely generated and $I$ be Burch. If $\Tor^R_t
	(R/I,M) =
	\Tor^R _{t+1}(R/I,M) = 0$ for some positive integer $t$, then $\pd_R(M)\leq t$.
\end{theorem}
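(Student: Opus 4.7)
The plan is to reduce to the base case $t=1$ via the standard syzygy shift in $\Tor$, and then invoke Observation \ref{b} (which, in the terminology introduced just before the theorem, amounts to the assertion that $R/I$ is quasi-Lichtenbaum whenever $I$ is Burch).

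For the reduction, set $M' := \Syz_{t-1}(M)$ coming from a minimal free resolution of $M$. The dimension-shift identity $\Tor_j^R(R/I, \Syz_i(M)) \cong \Tor_{j+i}^R(R/I, M)$ valid for $j \geq 1$ rewrites the hypothesis as
$$\Tor_1^R(R/I, M') = \Tor_2^R(R/I, M') = 0.$$
Applying the quasi-Lichtenbaum property of $R/I$ to the finitely generated module $M'$ then forces $M'$ to be free. Since freeness of $\Syz_{t-1}(M)$ is equivalent to $\pd_R(M) \leq t-1$, we in fact obtain the slightly sharper bound $\pd_R(M) \leq t-1 \leq t$, which gives the theorem.

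The real work is therefore concentrated in the base case, i.e.\ in Observation \ref{b}. To sketch what I would try there: choose an element $c \in \fm(I:_R\fm) \setminus I\fm$ furnished by the Burch condition, and write $c = \sum_i m_i a_i$ with $m_i \in \fm$ and $a_i \in (I:_R\fm)$. The containment $\fm c \subseteq \fm \cdot \fm(I:_R\fm) \subseteq \fm I = I\fm$ (the middle inclusion using $\fm(I:_R\fm) \subseteq I$ by definition of colon) shows that the class of $c$ in $I/I\fm$ is a nonzero element annihilated by $\fm$, producing a copy of $k = R/\fm$ inside $I/I\fm \cong I \otimes_R k$.

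The step I expect to be the main obstacle is not producing this embedding but upgrading it to a genuine direct summand of an appropriate syzygy of $R/I$; once such a splitting is in hand, the two vanishing hypotheses $\Tor_1^R(R/I, N) = \Tor_2^R(R/I, N) = 0$ propagate through the short exact sequences coming from the splitting and specialize to the vanishing of $\Tor_1^R(k, N)$, whence Nakayama's lemma forces $N$ to be free. The delicate point is that the Burch condition only directly gives a submodule $k \hookrightarrow I/I\fm$, not a summand; bridging that gap is the heart of the argument.
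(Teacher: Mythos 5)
First, this statement is not one the paper proves: it is quoted as Burch's 1968 theorem (citation \cite{B}), and what the paper actually establishes, in Observation \ref{b}, is the sharper bound $\pd_R(M)\leq t-1$, proved by combining Burch's theorem with Fact \ref{lemma1}. Your dimension-shift reduction to $t=1$ is correct and does recover the sharper bound, but invoking Observation \ref{b} to dispose of the $t=1$ case is circular in the paper's logic, since Observation \ref{b} is derived from the very theorem you are trying to prove. So the burden falls entirely on your independent sketch for the base case, and there the argument breaks down.

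The obstacle you isolate is a phantom. You correctly verify that $c\in\fm(I:_R\fm)\setminus I\fm$ yields a nonzero class $\bar{c}\in I/\fm I$ annihilated by $\fm$; but $I/\fm I\cong I\otimes_R k$ is a $k$-vector space, so every nonzero vector automatically spans a direct summand---there is no ``submodule versus summand'' gap to bridge. (Worse, the only conclusion you actually extract is $\bar{c}\neq 0$, i.e.\ $I/\fm I\neq 0$, which is just $I\neq 0$ by Nakayama and makes no essential use of the Burch condition.) The alternative target you float, realizing $k$ as a direct summand of a syzygy of $R/I$, is generally impossible: for $j\geq 1$ the module $\Syz_j(R/I)$ embeds in a free module and hence has positive depth whenever $\depth_R R>0$, e.g.\ for the Burch ideal $I=(x^2,xy)$ in $R=k[[x,y]]$, so it cannot contain $k$ even as a submodule. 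The real content of Burch's lemma lies in relating $I:_R\fm$ and the distinguished element $c$ to the differential $F_2\to F_1$ in the minimal free resolution of $R/I$, so that the simultaneous vanishing $\Tor_1^R(R/I,N)=\Tor_2^R(R/I,N)=0$ forces $\Tor_1^R(k,N)=0$; your sketch never engages the resolution of $R/I$, so that step is entirely missing.
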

Despite its importance, this was proved very recently in \cite[Theorem 1.2]{Dey} that
the above bound is not sharp. Our elementary and short proof is independent
of them:
\begin{observation}\label{b}
Adopt the above notation. Then 	$\pd_R(M)\leq t-1$.
	\end{observation}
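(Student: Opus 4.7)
The plan is to sharpen Burch's bound $\pd_R(M) \le t$ by one via a contradiction that pits the Burch hypothesis against a minimal presentation of the last syzygy. First I would apply the quoted Burch theorem to conclude $\pd_R(M) \le t$, and then suppose toward a contradiction that $\pd_R(M) = t$. Setting $N := \Syz_{t-1}(M)$, dimension shifting along the iterated syzygy sequences gives
$$\Tor^R_1(R/I, N) \cong \Tor^R_t(R/I, M) = 0,$$
and the minimal free resolution of $M$ restricts to a minimal presentation
$$0 \longrightarrow F \stackrel{\varphi}\longrightarrow G \longrightarrow N \longrightarrow 0$$
with $F \neq 0$ and every entry of the matrix of $\varphi$ lying in $\fm$. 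The $\Tor$-vanishing translates as injectivity of the induced map $\bar{\varphi} : F/IF \to G/IG$.

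Next I would extract the needed element from Burch's hypothesis. Since $\fm(I :_R \fm) = \fm I = I\fm$ whenever $(I :_R \fm) = I$, the Burch condition $\fm(I :_R \fm) \neq I\fm$ forces the strict inclusion $I \subsetneq (I :_R \fm)$, so I may pick some $y \in (I :_R \fm) \setminus I$. Because $\varphi$ has entries in $\fm$, we have $\varphi(F) \subseteq \fm G$, and hence
$$\varphi(yF) = y\varphi(F) \subseteq y \fm G \subseteq I G,$$
so $\bar{\varphi}$ kills the image of $yF$ in $F/IF$. Injectivity of $\bar{\varphi}$ upgrades this to $yF \subseteq IF$, and testing against any free basis vector of $F$ (possible since $F \neq 0$) produces $y \in I$, contradicting the choice of $y$. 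Thus $\pd_R(M) = t$ is impossible, and we conclude $\pd_R(M) \le t-1$, as desired.

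The only delicate point is the minimality of the presentation $0 \to F \to G \to N \to 0$: this is what guarantees that the entries of $\varphi$ sit in $\fm$, and without this the crucial inclusion $y\varphi(F) \subseteq IG$ is unavailable and the whole argument collapses. A secondary observation worth noting is that the full Burch condition is not actually needed in the sharpening step; only the weaker strict inclusion $(I :_R \fm) \supsetneq I$ is used here, with the full force of Burch being absorbed entirely into the invocation of the original bound $\pd_R(M) \le t$.
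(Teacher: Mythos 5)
Your proof is correct, but it takes a genuinely different route from the paper's. The paper dispatches this in one line: combine Burch's theorem ($\pd_R M \le t < \infty$) with Auslander's non-vanishing result (Fact \ref{lemma1}), which says that if $p := \pd_R M < \infty$ and $\depth_R N = 0$ then $\Tor_p^R(M,N) \neq 0$. Since the Burch hypothesis $\fm(I:_R\fm) \neq I\fm$ forces $(I:_R\fm) \supsetneq I$, hence $\fm \in \Ass_R(R/I)$, i.e. $\depth_R(R/I) = 0$, one concludes that if $\pd_R M$ were exactly $t$ then $\Tor_t^R(M, R/I) \neq 0$, contradicting the hypothesis. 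Your argument instead unpacks the mechanism by hand: after the same reduction to $\pd_R M \le t$, you dimension-shift to a minimal presentation of $N = \Syz_{t-1}(M)$, observe that $\Tor_1^R(R/I,N)=0$ makes $\bar\varphi : F/IF \to G/IG$ injective, and then use a Burch-witness $y \in (I:_R\fm)\setminus I$ together with minimality ($\varphi(F) \subseteq \fm G$) to force $y \in I$. Both proofs are sound and both only use the full Burch condition through the invocation of Burch's original bound (the improvement step needs only $\depth_R(R/I)=0$), as you observe. The paper's version is shorter and leans on Auslander's Fact \ref{lemma1} as a black box; yours is more elementary and self-contained, and makes explicit the role of the Burch element in killing the last syzygy. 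One small stylistic note: the step from $yF \subseteq IF$ to $y \in I$ deserves the half-sentence you implicitly gave it — identifying $F$ with $R^r$ and reading off a coordinate — since it is precisely the place where $F \neq 0$ is used.
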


\begin{proof}
This is a combination of Fact \ref{lemma1} and the above result of Burch.
\end{proof}

In fact \cite[Theorem 1.2]{Dey}
presents the module version of Observation \ref{b}. By using some ideas taken from  \S3 we observed in the previous draft that if a Burch ideal
is of finite injective dimension, then the ring is regular.
 Also, this result is in  \cite[Corollary 3.20]{Dey}. Since their argument is short compared to us, 
 we skip the mentioned observation. Instead, we apply it for $\fm M$ and reconstruct some known results:
 
 \begin{corollary}(Levin-Vasconcelos) Let $M$ be finitely generated and such that $\fm M$ is nonzero
 	and of  finite injective (projective) dimension. Then $R$ is regular.
 \end{corollary}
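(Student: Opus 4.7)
The plan is to leverage the Burch-type machinery of Observation~\ref{b}, applied to the submodule $\fm M\subseteq M$ rather than to a cyclic quotient $R/I$.

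First I would verify the Burch-type data. Since $\fm M\neq 0$, Nakayama's lemma gives $\fm M\neq \fm^2 M$, so $\fm M/\fm^2 M$ is a nonzero $k$-vector space, yielding a surjection $\fm M\twoheadrightarrow k$. In parallel, $\fm$ itself is a Burch ideal (provided $R$ is not a field): trivially $(\fm:_R\fm)\supseteq R$, so $\fm(\fm:_R\fm)\supseteq \fm\neq \fm^2$ by Nakayama. These are the two inputs the argument will feed into Observation~\ref{b}.

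For the projective case, assume $p:=\pd_R(\fm M)<\infty$. Take a minimal surjection $R^{\mu(M)}\twoheadrightarrow M$ and compose with $M\twoheadrightarrow M/\fm M\cong k^{\mu(M)}$; this gives the exact sequence
\[
0\to \Omega_R(M)\to \fm^{\oplus \mu(M)}\to \fm M\to 0,
\]
identifying $\fm^{\oplus \mu(M)}=\Omega_R(k^{\mu(M)})$ with the ``middle'' term. The plan is to apply Observation~\ref{b} with the Burch ideal $I=\fm$: the finiteness of $\pd_R(\fm M)$ yields $\Tor_i^R(k,\fm M)=0$ for all $i>p$. Feeding this into the long exact Tor sequence attached to the above short exact sequence, and using that $\Tor_i^R(k,\fm^{\oplus\mu(M)})\cong\Tor_{i+1}^R(k,k)^{\mu(M)}$, one extracts two consecutive vanishings of $\Tor_*^R(k,N)$ for a suitable syzygy $N$ of $k$; Observation~\ref{b} then produces the bound $\pd_R(k)<\infty$, so $R$ is regular by the Auslander--Buchsbaum--Serre theorem.

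For the injective case, Bass's theorem first gives that $R$ is Cohen--Macaulay. I would then Matlis-dualize: $(\fm M)^\vee$ has finite flat dimension, and since $R$ is complete-reducible enough (or after passage to $\widehat R$), finite projective dimension in the appropriate sense. The same Burch argument now runs in the dual picture, producing $\injdim_R(k)<\infty$, hence $R$ is regular. Alternatively, an Ext-based mirror of Observation~\ref{b}, applied to the Burch ideal $\fm$ together with the module $\fm M$, gives the conclusion directly without invoking duality.

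The main obstacle is the bookkeeping in the projective case: the kernel $\Omega_R(M)$ in the middle exact sequence may have infinite projective dimension, so one must be careful that the Burch property of $\fm$ (through Observation~\ref{b}) is what cancels this obstruction and isolates a copy of $k$ in a syzygy of $\fm M$. This is precisely where the hypothesis $\fm M\neq 0$ (yielding $\fm M\twoheadrightarrow k$) does its work and is the reason that $\fm M$ plays the same role here that $R/I$ plays for a Burch ideal $I$ in Observation~\ref{b}.
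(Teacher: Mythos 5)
Your short exact sequence $0\to\Omega_R(M)\to\fm^{\oplus\mu(M)}\to\fm M\to 0$ is correct (minimality of $R^{\mu(M)}\twoheadrightarrow M$ gives $\Omega_R(M)\subseteq\fm^{\oplus\mu(M)}$), but the step where you claim to extract ``two consecutive vanishings of $\Tor_\ast^R(k,N)$ for a suitable syzygy $N$ of $k$'' from the associated long exact $\Tor$-sequence does not work. Feeding $\Tor_i^R(k,\fm M)=0$ for $i>p$ into that sequence only produces \emph{isomorphisms} $\Tor_i^R(k,\Omega_R(M))\cong\Tor_{i+1}^R(k,k)^{\oplus\mu(M)}$ for $i>p$; there are no vanishings, and $\Omega_R(M)$ is not a syzygy of $k$. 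Concluding $\Tor_j^R(k,k)=0$ for two consecutive $j$ from these isomorphisms would already presuppose the regularity of $R$, so the argument is circular at exactly the point where you want to invoke Observation~\ref{b}.

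What is actually needed -- and what the paper's route, via Observation~\ref{b} and the argument of the neighboring Corollary~\ref{to}, relies on -- is the much stronger structural fact that $k$ is a \emph{direct summand} of $\fm M/x\fm M$ for $x\in\fm$ an $M$-regular element (this is \cite[Lemma 3.7]{Dey}). The surjection $\fm M\twoheadrightarrow k$ that you isolate is too weak: a quotient map does not propagate finiteness of $\pd_R$ (or $\id_R$) from $\fm M$ to $k$, but a splitting does. With the splitting in hand one gets $\pd_R(k)\leq\pd_R(\fm M/x\fm M)=\pd_R(\fm M)+1<\infty$ at once, and Auslander--Buchsbaum--Serre gives regularity; the injective case is the $\Ext$-mirror of the same splitting argument (your Matlis-duality detour also needs care, since $(\fm M)^\vee$ is artinian rather than finitely generated, so ``finite flat dimension implies finite projective dimension'' is not immediate). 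You also leave untreated the case $\depth_R(M)=0$, where no $M$-regular element is available and a separate reduction is required.
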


 \begin{example}
The	finitely generated assumption is needed. For example, let $R$ be any integral domain which is not regular. Since $E_R(k)$ is divisible, we have $\fm E_R(k)=E_R(k)$. Then $\fm E_R(k)$ is nonzero and injective. But,  $R$ is not regular.
\end{example}

The following  extends the main result of \cite{tony} by dropping an assumption on Castelnuovo-Mumford regularity \footnote{we only focus on $\CI_R(-)$ (see \cite{AGP} for its definition and basic properties) and other homological invariants such as $\Gdim_R(-)$ follow in the same vein.}:

\begin{corollary}\label{to}
Let $(R,\fm, k)$ be any local ring,  $M$ be  finitely generated of positive depth such that $\fm^i M$ is nonzero
and of  finite  complete-intersection dimension for some $i>0$. Then $R$ is complete-intersection.
\end{corollary}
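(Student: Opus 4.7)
The plan is to transpose the Levin--Vasconcelos-type argument of the preceding corollary to the setting of complete-intersection dimension, with $\fm^i M$ playing the role that $\fm M$ played there. First I would dispose of the trivial case where $R$ is a field, in which case $R$ is already a complete intersection and there is nothing to prove. Otherwise $\fm^i \neq 0$, and I would verify that $\fm^i M$ is a Burch-type submodule of $M$ in the module-theoretic sense of \cite{Dey}: from the obvious inclusion $\fm^{i-1}M \subseteq (\fm^i M :_M \fm)$ one gets $\fm \cdot (\fm^i M :_M \fm) \supseteq \fm^i M$, while Nakayama's lemma forces $\fm^i M \supsetneq \fm^{i+1} M = \fm \cdot (\fm^i M)$, since $\fm^i M \neq 0$. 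This yields the Burch-type inequality $\fm \cdot (\fm^i M :_M \fm) \neq \fm \cdot (\fm^i M)$.

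Next I would invoke the complete-intersection dimension analogue of the module version of Observation \ref{b}, i.e.\ the argument of \cite[Theorem 1.2]{Dey} carried out with $\CI_R(-)$ in place of $\pd_R(-)$ or $\id_R(-)$ (which is available via \cite{AGP} as the footnote alludes): a Burch module of finite complete-intersection dimension forces the ambient local ring to be complete intersection. Feeding in $\fm^i M$, whose CI-dimension is finite by hypothesis, yields the desired conclusion. The positive-depth assumption on $M$ is used exactly as in the Levin--Vasconcelos corollary above, to guarantee that $\fm^i M$ is nontrivial as a Burch module and that an $M$-regular element in $\fm$ remains regular on $\fm^i M$, so that the Burch-module mechanism has the input it needs.

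The main obstacle I expect is not conceptual but organisational: one must check that the proof of the ``Burch module of finite homological dimension $\Rightarrow$ regularity'' theorem of \cite{Dey} passes verbatim to complete-intersection dimension. This is a standard exercise with Cohen factorizations $R \twoheadleftarrow Q$, where finite $\CI_R(-)$ corresponds to finite $\pd_Q(-)$ over a complete intersection $Q$, the Burch property of $\fm^i M$ transfers appropriately to the $Q$-side, and the classical Burch argument over $Q$ followed by descent back to $R$ completes the proof. As the footnote indicates, the same template covers $\Gdim_R(-)$, which is why the proof is phrased uniformly for $\CI_R(-)$.
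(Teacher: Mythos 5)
Your verification that $\fm^i M$ is a Burch submodule of $M$ is correct, and the paper itself acknowledges (in the sentence following the corollary) that one may formulate the hypothesis that way. The genuine gap lies in the second half of your argument: you invoke a theorem of the shape ``a Burch module of finite $\CI$-dimension forces $R$ to be a complete intersection,'' but no such theorem is available in the sources you cite. \cite[Theorem~1.2]{Dey} is a Tor-vanishing / projective-dimension statement, and \cite[Corollary~3.20]{Dey} concerns injective dimension and regularity; neither addresses $\CI$-dimension, nor does \cite{AGP}. Your claim that the Dey--Kobayashi argument ``passes verbatim'' via Cohen factorizations is unsupported and, as stated, misidentifies the relevant device: finite $\CI$-dimension is witnessed by a quasi-deformation $R\to R'\leftarrow Q$ (flat local extension followed by a surjection from a complete intersection), and it is far from clear that the Burch property of $\fm^iM$ transfers through such a diagram in the form your argument would need. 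You have not checked this, and it is not a routine exercise.

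The paper sidesteps all of this with a shorter, more elementary argument whose key ingredient you have not used. Pick $x\in\fm$ regular on $M$; it is then $\fm^iM$-regular. By \cite{AGP}, $\CI_R(\fm^iM/x\fm^iM)$ remains finite. The decisive step is \cite[Lemma~3.7]{Dey}: the residue field $k$ is a \emph{direct summand} of $\fm^iM/x\fm^iM$. Since $\CI$-dimension is inherited by direct summands, $\CI_R(k)<\infty$, and the Avramov--Gasharov--Peeva characterization then says $R$ is a complete intersection. So the idea you are missing is the direct-summand lemma, which reduces the problem immediately to $\CI_R(k)$ and makes any Burch-module structure theorem in the $\CI$ setting unnecessary.
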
 

Instead of $\fm M$ one may assume the module is Burch and derives the same conclusion.

\begin{proof}
Let $x\in\fm$ be regular over $M$. Clearly, $x$ is $\fm^i M$-sequence. We know from \cite{AGP} that
$$\CI_R(\frac{\fm^i M}{x \fm^{i} M})=\CI_R(\fm ^iM)-1<\infty.$$
By \cite[Lemma 3.7]{Dey}, $k$ is a direct summand of $\frac{\fm^i M}{x \fm^{i } M}$.  This yields that 
$\CI_R(k)<\infty$, and so $R$ is  complete-intersection.
\end{proof}

\section{An application: dimension of syzygies}

Rings in this section are not artinian.  Recall that $\ell(-)$ is the length function. 
\begin{question}(See \cite[Question 1.2]{h})
	Let $M$ be such that $\pd_R(M) = \infty$ and $\ell(M) <\infty$. Is $\ell(\Syz_i(M))=\infty$ for all $i > \dim(R) + 1$?
\end{question}

\begin{proposition}\label{bl}	Let $L$ be Lichtenbaum, $\pd_R(L) = \infty$ and $\ell(L) <\infty$.
 Then $\ell(\Syz_i(L))=\infty$ for all $i > 0$.
\end{proposition}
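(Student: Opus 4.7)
The plan is to argue by contradiction. Assume $\ell(\Syz_i(L)) < \infty$ for some $i > 0$, and take $i$ to be the minimal such index; write $N := \Syz_i(L)$.

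First I would rule out $i = 1$: the exact sequence $0 \to \Syz_1(L) \to F_0 \to L \to 0$ has $F_0 = R^{\beta_0}$ of infinite length (since $R$ is not artinian and $\beta_0 \geq 1$) while $L$ has finite length, so additivity of length forces $\ell(\Syz_1(L)) = \infty$. Hence $i \geq 2$, and by the minimality of $i$ the preceding syzygy $\Syz_{i-1}(L)$ has infinite length. Since $\pd_R(L) = \infty$, the module $N$ is nonzero and is not free (otherwise $\pd_R(L) \leq i-1$). Moreover, $N \hookrightarrow F_{i-1}$ is a nonzero finite-length submodule of a free module, which forces $\depth R = 0$: over a ring of positive depth, every nonzero submodule of a free module contains a nonzerodivisor and therefore cannot have finite length.

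The strategy is to exhibit a finitely generated non-free module $F$ with $\Tor^{R}_{1}(L, F) = 0$, directly contradicting that $L$ is Lichtenbaum. The key structural observation is that, since $\Syz_i(L) = \operatorname{im}(d_i) \subseteq F_{i-1} = R^{\beta_{i-1}}$ has finite length, every column, and hence every entry, of the matrix representing $d_i$ must lie in the $\fm$-torsion submodule $\mathrm{H}^{0}_{\fm}(R)$ of $R$ (which is nonzero since $\depth R = 0$). I would then take $F := R/aR$ for a well-chosen nonzero $a \in \mathrm{H}^{0}_{\fm}(R)$ extracted from $d_i$; since $a \neq 0$, $R/aR$ is not free, and one computes
\[
\Tor_{1}^{R}(L, R/aR) \;=\; (0:_{L} a)\big/(0:_{R} a)\cdot L.
\]
When $\fm a = 0$ this reduces to $(0:_{L} a)/\fm L$, which one aims to show vanishes by using the combinatorics of the minimal resolution at stage $i$. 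This mirrors the argument in the proof of Proposition \ref{20}, $(\mathrm{ii})\Rightarrow(\mathrm{i})$, where an analogous test module was produced from socle information.

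The main obstacle is verifying that $\Tor_{1}^{R}(L,F)=0$ for the chosen $F$. Attempting the naive choice $F := N$ fails, because $\Tor_{1}^{R}(L,N) \cong \Tor_{i+1}^{R}(L,L)$ and the Lichtenbaum property itself forces this to be nonzero once $N$ is non-free. Thus one must genuinely leverage the interplay between $\ell(L) < \infty$, the finite length of $N$, and the socle-entry structure of $d_i$ in order to engineer the vanishing $\Tor_{1}^{R}(L, R/aR) = 0$. Once that is established, the Lichtenbaum hypothesis on $L$ forces $R/aR$ free, which is impossible for $a \neq 0$, completing the contradiction and proving $\ell(\Syz_i(L))=\infty$ for every $i>0$.
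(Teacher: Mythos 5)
Your opening moves are sound: the reduction to a minimal $i$, the base case $i=1$, and the deduction that a nonzero finite-length submodule of a free module forces $\depth R = 0$ all hold, and in fact that last observation is a cleaner endgame than the paper's appeal to \cite[Lemma 3.4]{A}. But the proof has a genuine gap which you yourself flag: no non-free test module $F$ with $\Tor^R_1(L,F)=0$ is ever exhibited, and the candidate you sketch cannot work. For $a\in H^0_\fm(R)$ with $\fm a=0$ and $a\neq 0$ one has $(0:_R a)=\fm$, whence $\Tor^R_1(L,R/aR)\cong (0:_L a)/\fm L$; since $\depth_R L=0$ the socle of $L$ is nonzero and lies in $(0:_L a)$, and it need not lie in $\fm L$. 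Taking $L=k$ (a legitimate Lichtenbaum module of finite length with $\pd_R k=\infty$ whenever $R$ is singular) gives $\Tor^R_1(k,R/aR)\cong k\neq 0$ for every such $a$; indeed $\Tor^R_1(k,F)=0$ forces $F$ free, so no non-free test module of any shape exists for this $L$. Your route is therefore blocked already in the simplest instance the statement covers.

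The paper's proof uses the Lichtenbaum property in the opposite direction, to establish freeness of something rather than to hunt for a non-free counterexample. The finite-length hypothesis on the syzygy is fed into \cite[Lemma 4.1]{A} to produce the vanishing
\[
\Tor^R_1\bigl(L,\,\Syz_{i-1}(R/H^0_\fm(R))\bigr)=\Tor^R_i\bigl(L,\,R/H^0_\fm(R)\bigr)=0.
\]
The Lichtenbaum hypothesis now says $\Syz_{i-1}(R/H^0_\fm(R))$ is free, hence $\pd_R(H^0_\fm(R))<\infty$. Burch's theorem then forces a nonzero ideal of finite projective dimension to contain a regular element, which is impossible for $H^0_\fm(R)$ since each of its elements is killed by a power of $\fm$ and $R$ is not artinian. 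Hence $H^0_\fm(R)=0$, i.e.\ $\depth R>0$, which contradicts the $\depth R=0$ you already deduced (the paper instead closes with \cite[Lemma 3.4]{A}). The ingredient missing from your attempt is precisely the right test module --- a syzygy of $R/H^0_\fm(R)$, not a cyclic quotient of $R$ --- together with the structural lemma that delivers the first vanishing.
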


\begin{proof}
	By the short exact sequence $0\to \Syz_1(L)\to R^n\to L\to 0$
	and the facts that $\ell(L)<\infty=\ell(R^n)$
	we see $\ell(\Syz_1(L))=\infty$. Suppose $\ell(\Syz_{i+1}(L))<\infty$
	for some fixed $i > 0$. In the light of \cite[Lemma 4.1]{A} we observe
	$$\Tor^R_1(L,\Syz_{i-1}(R/ H^0_{\fm}(R)))=\Tor^R_i(L,R/ H^0_{\fm}(R))= 0.$$  
	Since
	$M$ is Lichtenbaum, $\Syz_{i-1}(R/ H^0_{\fm}(R))$ is free.
	In other words, $\pd_R(H^0_{\fm}(R))<\infty$.
	Suppose $H^0_{\fm}(R)\neq0$. According to a  celebrated result of Burch, 
	$H^0_{\fm}(R)$ contains a regular element $x$. By definition,
	$\fm^n x=0$ for some $n>0$. Since $x$ is regular, we observe that 
	$H^0_{\fm}(R)=0$, i.e., $\depth_R(R)>0$.
	In view of \cite[Lemma 3.4]{A}, we see 
	$\ell(\Syz_i(L))=\infty$.
	This completes the proof.
	\end{proof}

\begin{remark}\label{lf}Adopt the notation of Proposition \ref{bl}. Instead of  $\ell(L) <\infty$ we may assume that $L$ is locally free on the punctured spectrum. By the same proof
	$\ell(\Syz_i(L))=\infty$ for all $i > 1$.
\end{remark}

We say a local ring	$R$ is of isolated  singularity if it is singular and $R_{\fp}$ is regular for all $\fp\in(\Spec(R)\setminus\{\fm\})$.
\begin{corollary}\label{obfr}
	Let $(R,\fm)$ be a complete local ring of prime characteristic with perfect residue field.  If
	$R$ is of isolated  singularity, then
	 $\ell(\Syz_i(\up{\varphi^n}R ))=\infty$ for all $i > 1$ and all $n\gg 0$. 
\end{corollary}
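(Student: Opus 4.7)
The plan is to set $L := \up{\varphi^n}R$ and apply Remark \ref{lf} directly. Completeness together with the perfect residue field ensures that Frobenius is module-finite, so $L$ is a finitely generated $R$-module for every $n$, making Remark \ref{lf} applicable. I then need to verify that $L$ (a) is locally free on the punctured spectrum, (b) has infinite projective dimension, and (c) is Lichtenbaum for $n \gg 0$.

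For (a), I would take any $\fp \in \Spec(R) \setminus \{\fm\}$ and observe that $R_\fp$ is regular by the isolated-singularity hypothesis. Kunz's theorem says the Frobenius on a regular local ring is flat, so $L_\fp \cong \up{\varphi^n}(R_\fp)$ is $R_\fp$-flat; being finitely generated over the local ring $R_\fp$, it is free. For (b), finite projective dimension of $L = \up{\varphi^n}R$ would, again by Kunz, force $R$ itself to be regular, contradicting the singular half of the isolated-singularity hypothesis. For (c), I would invoke Remark \ref{fr}, which states that $\up{\varphi^n}R$ is Lichtenbaum for all $n \gg 0$ precisely when $\depth(R) = 0$. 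Once (a)--(c) are in place, Remark \ref{lf} produces $\ell(\Syz_i(\up{\varphi^n}R)) = \infty$ for all $i > 1$, which is the desired conclusion.

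The main obstacle is step (c): the isolated-singularity assumption constrains only the punctured spectrum and does not obviously force $\depth(R) = 0$ when $\dim R \geq 1$. I would handle this by combining Remark \ref{fr} with the F-finite structure on $R$ (so that the non-flatness of Frobenius is witnessed at $\fm$ via a depth-zero assertion for some Frobenius iterate), or, failing that, by restricting attention to the case $\depth(R) = 0$ that is implicit in the way Remark \ref{fr} enters the argument. With (c) secured, the remainder of the proof is formal, since (a) and (b) use only Kunz's theorem plus the module-finiteness of Frobenius.
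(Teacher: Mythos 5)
Your steps (a) and (b) are exactly the paper's argument: $F$-finiteness comes from completeness and perfectness of $k$, Kunz's theorem gives infinite projective dimension because $R$ is singular, and the behaviour of Frobenius under localization plus Kunz again gives local freeness off $\fm$ since $R_\fp$ is regular for $\fp \neq \fm$. The gap you honestly flag in step (c) is real, and neither of your two suggested fixes closes it as stated: the isolated-singularity hypothesis says nothing about $\depth(R)$, and by Remark \ref{fr} the module $\up{\varphi^n}R$ is Lichtenbaum only when $\depth(R)=0$, so when $\depth(R)>0$ you cannot invoke Remark \ref{lf} at all. The paper resolves this with a reduction: it asserts that one may assume $\depth(R)=0$ without loss of generality, pointing to the proof of \cite[Lemma 3.4]{A}, which is precisely the external ingredient that handles the positive-depth case for modules that are locally free on the punctured spectrum with infinite projective dimension. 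Your second fallback (``restrict attention to the case $\depth(R)=0$'') is the right instinct, but it is only legitimate once that reduction lemma is cited; without it, the proof is incomplete whenever $\depth(R)>0$. Your first fallback (extracting depth-zero information from $F$-finiteness alone) does not work, since over a Cohen--Macaulay isolated singularity of positive dimension the Frobenius pushforward has positive depth and is genuinely not Lichtenbaum.
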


\begin{proof}
	By the assumption, $R$ is $F$-finite (see \cite[Page 398]{BH}). This means that $\up{\varphi^n}R$ is finitely generated as an $R$-module.
	By a celebrated theorem of Kunz (see \cite[Corollary 8.2.8]{BH}) we know that $\pd_R(\up{\varphi^n}R) = \infty$, because $R$ is not regular. Recall from  \cite[Proposition 8.2.5]{BH}     that $(\up{\varphi^n}R)_{\fp}=\up{\varphi^n} (R_{\fp})$    for all $\fp\in\Spec(R)\setminus\{\fm\}$. 
	Again, another use of \cite[Corollary 8.2.8]{BH}  shows that $\up{\varphi^n}R$ is locally free on the punctured spectrum, because $R_{\fp}$ is regular for all $\fp\in\Spec(R)\setminus\{\fm\}$. Without loss of generality we may assume that $\depth(R)=0$, see the proof of \cite[Lemma 3.4]{A}. In view of Remark \ref{fr}
we observe that 	$\up{\varphi^n}R$ is Lichtenbaum for all $n\gg 0$.
	So, we are in the situation of Remark \ref{lf}. We conclude from this remark that 	$\ell(\Syz_i(\up{\varphi^n}R ))=\infty$ for all $i > 1$.
	\end{proof}

The  Lichtenbaum  assumption in  Proposition \ref{bl}       is important:

\begin{example}
	Let $R:=k[[x,y]]/(x^2,xy)$. Since  $R\stackrel{x}\lo	R\stackrel{y}\lo R\to R/yR\to0$ is exact, $\Syz_2(R/yR)=xR=R/(0:x)=k$
is of length one. In order to see  $R/yR$
is not Lichtenbaum, recall that $\Tor^R_1(R/yR,R/xR)=0$ and that $R/xR$ is not free.
\end{example}
In the previous example $\fm(y:\fm) =\fm^2=y^2R=y\fm$. So, $yR$ is not Burch. 
One may ask: Is Proposition \ref{bl} true for Burch modules?
This is not the case as the next example indicates:

\begin{example}
	Let $R:=k[[x,y]]/(x^2,xy)$. Since  $R\stackrel{x}\lo	R\stackrel{y^2}\lo R\to R/y^2R\to0$ is exact, $\Syz_2(R/y^2R)=xR=R/(0:x)=k$
	is of length one, and recall that  $I:=y^2R$
	is Burch.
\end{example}

In the previous two examples
$\Tor^R_1(R/I,A)=0$ where $I$ is generated  by an $A$-regular sequence. Is this true in general?
In fact, this was asked before than us:

\section{Vanishing and non-vanishing of  $\Tor_1$}

\begin{question}\label{61}Let $ (R,\fm)$ be a  local ring and $M$ a finitely generated $R$-module. Let $x_1,\ldots,x_t$ be an $M$-regular sequence and $I=(x_1,\ldots,x_t)$. Is it true that
$\Tor^R_1(R/I^n,M)=0$
for all $n\geq1$?\end{question}

\begin{observation}Here, we collect a couple elementary observations:
	\begin{enumerate} \item[(i)] 	The case M is cyclic is in \cite[Ex. 11.12]{ht}.

\item[(ii)]  If $I$ is generated by regular sequence, the answer is positive. Indeed, we apply a routine induction
on $n$, to reduce to the case $n=1$. In this case, we have $$\Tor^R_1(R/I,M)=H_1(\textbf{K}(\underline{x},R)\otimes_RM)=H_1(\textbf{K}(\underline{x},M))=0.$$So, the claim follows.

\item[(iii) ]  If $\pd_R(M)$ is finite, the answer is positive. Indeed, in view of  Auslander's zero divisor theorem, $I$ is generated by a regular sequence $\underline{x}$. In view of ii)
we see $\Tor^R_1(R/I^n ,M)=0$  for all $n\geq1$.
 \item[(iv) ]The question is true over regular rings. Indeed, as over regular local rings modules are of finite projective dimension, the claim is in iii).
 \item[(v) ] The question is true if $I$ is principal. 
 \item[(vi) ] The question is true  over 1-dimensional rings. Indeed,
recall that $$1\leq \grade_R(I,M)\leq\depth_R(M)\leq\dim(M)\leq \dim(R)=1.$$ This says that $ \grade(I,M)=1$.
Since $I$ is generated by an $M$-sequence, we deduce that $I$ is principal. Now the desired claim is in the previous item.
	\end{enumerate}
\end{observation}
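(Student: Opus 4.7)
The plan is to handle the six items by reductions: (i) is a citation; (ii) is the technical core via Koszul plus induction on $n$; (iii) and (iv) reduce to (ii); (v) is an independent direct calculation; and (vi) reduces to (v) by a dimension count.

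For (ii), I would first settle $n=1$. Since $I=(\underline{x})$ with $\underline{x}$ an $R$-regular sequence, the Koszul complex $\textbf{K}(\underline{x})$ is a free resolution of $R/I$, so tensoring with $M$ yields the Koszul complex of $M$ against $\underline{x}$, whose first homology vanishes because $\underline{x}$ is also $M$-regular. To lift this to general $n$ I would induct via
\[
0\longrightarrow I^{n-1}/I^n \longrightarrow R/I^n \longrightarrow R/I^{n-1}\longrightarrow 0.
\]
The key input is the classical fact that $I^{n-1}/I^n$ is a free $R/I$-module when $I$ is generated by a regular sequence; this reduces its $\Tor_1^R(-,M)$ to copies of $\Tor_1^R(R/I,M)=0$, and the long exact sequence propagates the vanishing from $n-1$ to $n$. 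Items (iii) and (iv) are then immediate: Auslander's zero-divisor theorem guarantees that an $M$-regular sequence is $R$-regular once $\pd_R(M)<\infty$, so (ii) applies and yields (iii); and over a regular local ring every finitely generated module has finite projective dimension, so (iii) yields (iv).

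The substantive new work is (v). Writing $I=(x)$ with $x$ an $M$-regular element, $x^n$ is also $M$-regular and $I^n=(x^n)$. From $0\to (x^n)\to R\to R/(x^n)\to 0$ I would identify $\Tor_1^R(R/(x^n),M)$ with the kernel of the multiplication map $(x^n)\otimes_R M\to M$. Using the presentation $(x^n)\cong R/(0:_R x^n)$, this kernel is the image of $(0:_M x^n)$ inside $M/(0:_R x^n)M$, which vanishes because $x^n$ being $M$-regular forces $(0:_M x^n)=0$. For (vi), the inequalities $t\leq \grade_R(I,M)\leq \depth_R(M)\leq \dim R=1$ force $t\leq 1$, so $I$ is principal and (v) applies.

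I do not foresee a serious obstacle here: the arguments are largely bookkeeping. The only classical ingredient to invoke by name is the $R/I$-freeness of $I^{n-1}/I^n$ for ideals generated by a regular sequence (essentially the polynomial structure of the associated graded), which is standard. The verification in (v) is a short diagram chase once one writes $(x^n)\cong R/(0:_R x^n)$, and (vi) is purely dimensional.
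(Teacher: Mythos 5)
Your proposal is correct and follows essentially the same route as the paper: item by item, (i) is a citation, (ii) is Koszul plus an induction the paper calls ``routine'' and you spell out via the filtration $I^{n-1}/I^n$ being $R/I$-free, (iii) and (iv) reduce to (ii) exactly as in the paper, (v) you supply the short $\Tor_1$ computation the paper omits, and (vi) is the same grade/depth/dimension count forcing $t\le 1$. The only cosmetic difference is in (v): from the two-term free presentation $R\xrightarrow{x^n}R\to R/(x^n)\to 0$ one can read off directly that $\Tor_1^R(R/(x^n),M)$ embeds in $(0:_M x^n)=0$, which is a touch shorter than routing through $(x^n)\cong R/(0:_R x^n)$, but your version is equally valid.
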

Here, is a nontrivial example:

\begin{example}\label{e1}
Let $R:=k[[x^4,y^4,x^3y,xy^3]]$ and $I:=(x^4,y^4)$.  The following holds:	
	\begin{enumerate} \item[(a)] $B:=k[[x^4,y^4,x^3y,xy^3, x^2y^2]]$ is finitely generated
		as an $R$-module.
		\item[(b)] $x^4,y^4$ is   $B$-regular.
		\item[(c)]  $\Tor^R_1(R/I ,B)=0$.
	\end{enumerate}
\end{example}

\begin{proof}
	In view of \cite[42.4]{equ} we observe that $\underline{x}:=x^4,y^4$ is   $B$-sequence and there
	is an exact sequence $$\zeta:=0\longrightarrow R\longrightarrow B\longrightarrow k\longrightarrow 0.$$
	We apply $-\otimes_RR/\underline{x}R$ to  $\zeta$ and deduce the following exact sequence
	$$0\to \Tor^R_1(R/I ,B)\to \Tor^R_1(R/I ,k)\to R/I\otimes_RR\to R/I\otimes_R B\to R/I\otimes_R k\to 0.$$
	Since $2=\mu(I)=\beta_1(R/I)=\dim_k(\Tor^R_1(R/I ,k))$ we see $\Tor^R_1(R/I ,k)=k\oplus k$. Also, $R/I\otimes_R k=R/\fm+I=k$. Let $\ell:=\ell(\Tor^R_1(R/I ,B))$.  
By plugging these 
	in the previous sequence and taking
	the length, we obtain $$\ell=1-\ell( R/I)+\ell( B/IB)\quad(\ast)$$On the one hand we have
	
	$$0=\frac{IB}{IB}\subsetneqq\frac{IB+(x^3y)B}{IB}\subsetneqq\frac{IB+(x^3y,xy^3)}{IB}\subsetneqq\frac{\fm_B}{IB}\subsetneqq B/ I B.$$
	In order to see the factors are simple modules, we remark that    $\fm xy^3\in I B$ and  $\fm x^3y\in I $.
	Indeed, for example we have $( x^3y) x^3y=x^6y^2$ and $x^2y^2 \in  B$,  i.e.,  $( x^3y)  x^3y =(x^2y^2)x^4\in I B$.
It turns out that $\ell(  B/IB)= 4$. 
	On the other hand, we have
	the following composite sequence $$0=\frac{I}{I}\subsetneqq\frac{I+(x^3y)}{I}\subsetneqq\frac{I+(x^3y,xy^3)}{I}=\frac{\fm_R}{I}\subsetneqq R/ I.$$In order to see the compute the factors, we remark that    $\fm x^3y\notin I $ since
	 $x^2y^2 \notin  R$. However, the factors of $0=\frac{I}{I}\subsetneqq\frac{I+(x^6y^2)}{I}\subsetneqq\frac{I+(x^3y)}{I}$ are simple.
	In the same vein, the factor of $$\frac{IB+(x^3y)B}{IB}\subsetneqq\frac{IB+(x^3y,xy^3)}{IB}.$$
	is of length two.
We proved  that $\ell( R/I)=5$.  In view of $(\ast)$ we have $\ell =0$. In other words, $\Tor^R_1(R/I ,B)=0$, as claimed.
\end{proof}

\begin{lemma}(See \cite[Lemma 2.6]{st})
	Let $F$ be an R-module, K be a submodule of F and set $M = F/K$. Let $J = (a_1, \ldots , a_r)$ be an ideal
	generated by an $M$-regular sequence. Then $J^n F \cap K = J^nK$
	 for all $n > 0$.
\end{lemma}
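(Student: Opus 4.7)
The inclusion $J^nK\subseteq J^nF\cap K$ is trivial, so the work lies in the reverse containment. My plan is to convert it into a $\Tor$-vanishing statement: applying $-\otimes_R R/J^n$ to the short exact sequence $0\to K\to F\to M\to 0$ produces the piece
\begin{equation*}
\Tor_1^R(M,R/J^n)\longrightarrow K/J^nK\longrightarrow F/J^nF
\end{equation*}
of the $\Tor$ long exact sequence, whose middle-to-right kernel is exactly $(J^nF\cap K)/J^nK$. Hence it suffices to prove that $\Tor_1^R(M,R/J^n)=0$ for all $n\ge 1$.

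The base case $n=1$ follows from the $M$-regularity of $(a_1,\ldots,a_r)$, which forces the Koszul complex $K_\bullet(a;M)$ to be acyclic in positive degrees. This yields $\Tor_1^R(M,R/J)=0$ either by comparing $K_\bullet(a;R)$ (which agrees with any free resolution of $R/J$ in degrees $\le 1$) and tensoring with $M$, or, more directly, by an element chase: if $x=\sum a_if_i\in JF\cap K$, then $\sum a_i\bar f_i=0$ in $M$, so Koszul antisymmetry provides antisymmetric $g_{ij}\in F$ and $k_i\in K$ with $f_i=\sum_j a_jg_{ij}+k_i$; substituting, the double sum $\sum_{i,j}a_ia_jg_{ij}$ vanishes by antisymmetry plus commutativity, leaving $x=\sum a_ik_i\in JK$.

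For the inductive step on $n$, I would use the short exact sequence $0\to J^{n-1}/J^n\to R/J^n\to R/J^{n-1}\to 0$. The inductive hypothesis gives $\Tor_1^R(M,R/J^{n-1})=0$, so the $\Tor$ long exact sequence reduces matters to showing $\Tor_1^R(M,J^{n-1}/J^n)=0$. This is where $M$-quasi-regularity (a consequence of $M$-regularity) enters: the natural surjection $(M/JM)[Y_1,\ldots,Y_r]_{n-1}\twoheadrightarrow J^{n-1}M/J^nM$ is an isomorphism, and unpacking this identifies precisely which syzygies among the monomials $a^\alpha$ of degree $n-1$ become trivial after tensoring with $M$. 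I expect this last step to be the main obstacle---it is the technical heart of the argument, encoding the fact that $\mathrm{gr}_J M$ is, as a $\mathrm{gr}_J R$-module, as free as the $M$-regular sequence structure allows. Once the vanishing is in hand, the desired equality $J^nF\cap K = J^nK$ follows immediately from the first paragraph.
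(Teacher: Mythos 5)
Your reduction to $\Tor_1^R(M,R/J^n)=0$ and your treatment of the case $n=1$ (via $H_1$ of the Koszul complex on $M$) are fine. The inductive step, however, breaks: it is not true in general that $\Tor_1^R(M,J^{n-1}/J^n)=0$. Regularity of $a_1,\dots,a_r$ on $M$ controls $\gr_JM$, i.e.\ gives that $(M/JM)^s\to J^{n-1}M/J^nM$ is an isomorphism, but it gives no control over $J^{n-1}/J^n$ as an $R$-module, and that is the second argument of the $\Tor$ you need to kill. Concretely, take $R=k[[x,y]]/(xy)$, $J=(x)$, $M=R/(y)\cong k[[x]]$, so $a_1=x$ is $M$-regular. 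Then $\Ann_R(\bar x)=(x,y)$ gives $J/J^2\cong R/\fm=k$, while $M$ is not free and a periodic free resolution $\cdots\to R\xrightarrow{\,x\,}R\xrightarrow{\,y\,}R\to M\to0$ shows $\Tor_1^R(M,k)\cong k\neq 0$. Yet $\Tor_1^R(M,R/J^2)=0$ here: in the long exact sequence the nonzero $\Tor_1^R(M,J/J^2)$ is absorbed by the image of the connecting map from $\Tor_2^R(M,R/J)\cong k$. Your reduction discards precisely that piece of information, so the induction does not close.

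What $M$-quasi-regularity actually buys, at the level of elements, is the weaker containment $J^nF\cap K\subseteq J^nK+J^{n+1}F$: for $x=\sum_{|\alpha|=n}a^\alpha f_\alpha\in K$, the relation $\sum a^\alpha\bar f_\alpha=0$ in $M$ forces each $\bar f_\alpha\in JM$, so $f_\alpha=\sum_i a_ig_{\alpha,i}+k_\alpha$ with $k_\alpha\in K$, and substituting gives $x\in J^{n+1}F+J^nK$. Iterating and intersecting with $K$ yields $J^nF\cap K\subseteq J^nK+(J^mF\cap K)$ for every $m\geq n$, and one then needs a finiteness input (Krull intersection, or Artin--Rees applied to $K\subseteq F$) to pass to $J^nF\cap K=J^nK$; the argument does not close purely by the $\Tor$ long exact sequence the way you have set it up. For what it is worth, the paper does not reprove this lemma internally but quotes it from Striuli, so the issue above is with the internal logic of your attempt rather than a deviation from a proof in the paper.
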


\begin{observation}\label{qs}
Answer  to Question \ref{61} is yes.
\end{observation}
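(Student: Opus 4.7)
The plan is direct and short: reduce the question to the content of Lemma 6.4 via the standard interpretation of $\Tor_1$ in terms of an intersection of submodules.

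First, choose any presentation $0 \longrightarrow K \longrightarrow F \longrightarrow M \longrightarrow 0$ of $M$ with $F$ a free $R$-module, e.g.\ $F = R^{\mu(M)}$ and $K = \Syz_1(M)$. Tensoring this short exact sequence with $R/I^n$ produces the four-term exact sequence
$$0 \longrightarrow \Tor_1^R(R/I^n, M) \longrightarrow K/I^n K \longrightarrow F/I^n F \longrightarrow M/I^n M \longrightarrow 0,$$
so I obtain the classical identification
$$\Tor_1^R(R/I^n, M) \;\cong\; \frac{I^n F \cap K}{I^n K}.$$

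Now I apply Lemma 6.4 directly: since $x_1,\ldots,x_t$ is an $M$-regular sequence and $I=(x_1,\ldots,x_t)$, the lemma (with $J = I$) gives $I^n F \cap K = I^n K$ for all $n > 0$. Consequently the numerator equals the denominator, and the above Tor module vanishes.

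Since there is no real obstacle beyond correctly invoking Strooker's lemma, the only thing to be careful about is that the hypothesis of Lemma 6.4 is stated exactly for our setting: $M = F/K$ with $F$ free and $J$ generated by an $M$-regular sequence. Both conditions are free to arrange. A minor remark worth including: this recovers and unifies most of the partial cases listed before the statement (principal $I$, $I$ generated by a regular sequence, $\pd_R(M) < \infty$, the $1$-dimensional case, and the nontrivial Example 6.3), all of which become one-line consequences of the identification above combined with Lemma 6.4.
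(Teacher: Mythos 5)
Your proof is correct and is essentially the same as the paper's: both identify $\Tor_1^R(R/I^n,M)$ with $(I^nF \cap K)/I^nK$ via the presentation $0 \to K \to F \to M \to 0$ and then invoke Strooker's lemma (Lemma 6.4 in the paper) to conclude that the numerator equals the denominator. If anything, your write-up is slightly cleaner in that you tensor directly with $R/I^n$ rather than first with $R/I$ and then re-running the argument for the power, but the underlying idea is identical.
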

\begin{proof} We look at $0\to \Syz_{1}(M) \to F:=R^n\to 	M\to 0 \quad(\ast)$, i.e.
 $M\cong F/ \Syz_{1}(M)$. Let $I:= (a_1, \ldots , a_r)$. 
 We tensor $(\ast)$ with $R/I$ gives us $$0=\Tor_1^R(F, R/I) \lo \Tor_1^R(M, R/I) \lo \Syz_{1}(M)/I\Syz_{1}(M) \lo F/IF \lo  M/IM \lo 0.$$
 Then, $\Tor_1^R(M, R/I) $ is isomorphic to $(\Syz_{1}(M) \cap IF)/I\Syz_{1}(M)$. 
Combining these with the previous lemma,  by plugging $K:=\Syz_{1}(M)$, it follows that if $a_1,\ldots,a_r$ is an $M$-regular sequence, then $\Tor_1^R(M, R/(a_1,\ldots,a_r)^n)=0$ for all $n>0$. \end{proof}

\section*{Acknowledgements}
The authors are grateful to 
Souvik Dey for his comments on the paper in particular on item \ref{qs}.


\end{document}